\newcommand{\FF}{{\mathbb{F}}}
\newcommand{\bC}{{\mathbf{C}}}
\newcommand{\bH}{{\mathbf{H}}}
\newcommand{\bT}{{\mathbf{T}}}
\newcommand{\bG}{{\mathbf{G}}}
\newcommand\bg[1]{\mathbf{#1}}
\newcommand{\bZ}{{\mathbf{Z}}}
\newcommand{\ZZ}{{\mathbb{Z}}}
\newcommand{\SSS}{\mathsf{S}}
\newcommand{\CCC}{\mathsf{C}}
\newcommand{\Irr}{{\operatorname{Irr}}}
\newcommand{\out}{{\operatorname{Out}}}
\newcommand{\OO}{{\operatorname{O}}}
\newcommand{\SL}{{\operatorname{SL}}}
\newcommand{\Sp}{{\operatorname{Sp}}}
\newcommand{\SO}{{\operatorname{SO}}}
\newcommand{\Bl}{{\operatorname{Bl}}}
\newcommand{\Soc}{{\operatorname{Soc}}}
\newcommand{\PSL}{{\operatorname{PSL}}}
\newcommand{\PSp}{{\operatorname{PSp}}}
\newcommand{\POmega}{{\operatorname{P\Omega}}}
\newcommand{\Ker}{\operatorname{Ker}}
\newcommand{\Syl}{{\operatorname{Syl}}}
\newcommand{\bO}{{\mathbf O}}
\newcommand{\bF}{{\mathbf F}}
\newcommand{\Sym}{\mathfrak{S}}
\newcommand{\sym}{\mathfrak{S}}
\newcommand{\alt}{\mathfrak{A}}
\newcommand{\tw}[1]{{}^#1\!}
\def\oh#1#2{{\bf O}_{#1}(#2)}
\def\zent#1{{\bf Z}(#1)}
\def\irr#1{{\rm Irr}(#1)}
\def\norm#1#2{{\bf N}_{#1}(#2)}
\def\cent#1#2{{\bf C}_{#1}(#2)}
\def\ker#1{{\rm Ker}(#1)}
\def\sbs{\subseteq}
\theoremstyle{theorem}
\newtheorem{thm}{Theorem}[section]
\newtheorem{lem}[thm]{Lemma}
\newtheorem{cor}[thm]{Corollary}
\newtheorem{prop}[thm]{Proposition}
\newtheorem{cond}[thm]{Condition}
\newtheorem{rmk}[thm]{Remark}
\newtheorem{thml}{Theorem}
\newtheorem{condl}[thml]{Condition}
\theoremstyle{remark}
\newtheorem{rem}[thm]{Remark}
\newtheorem{exmp}[thm]{Example}
\newcommand\wt[1]{\widetilde{#1}}
\newcommand{\GL}{\operatorname{GL}}
\newcommand\type[1]{\operatorname{#1}}
\newcommand{\Aut}{\operatorname{Aut}}
\newcommand{\aut}{\operatorname{Aut}}
\begin{document}

\title{Brauer's Problem 21 for Principal Blocks}

\author{Alexander Moret{\'o}}
\address[A. Moret{\'o}]{Departament d'{\`A}lgebra - Universitat de Val{\`e}ncia, 46100 Burjassot,
Val\`encia, Spain}
\email{alexander.moreto@uv.es}

 \author{Noelia Rizo}
\address[N. Rizo]{Departament d'{\`A}lgebra - Universitat de Val{\`e}ncia, 46100 Burjassot,
Val\`encia, Spain}
\email{noelia.rizo@uv.es}

\author{A. A. Schaeffer Fry}
\address[A. A. Schaeffer Fry]{Dept. Mathematics - University of Denver, Denver, CO 80210, USA; and 
Dept. Mathematics and Statistics - MSU Denver, Denver, CO 80217, USA}
\email{mandi.schaefferfry@du.edu}
  
\thanks{The authors thank the  Isaac Newton Institute
for Mathematical Sciences (INI) in Cambridge and the organizers of the Summer 2022 INI program Groups, Representations, and
Applications: New Perspectives, supported by EPSRC grant EP/R014604/1, where part of this work was completed.  The second and third-named authors also thank the National Science Foundation Grant No. DMS-1928930, which supported them while they were in residence at the Mathematical Sciences Research Institute in Berkeley, California, during the Summer of 2023.
The first and second-named authors are supported by Ministerio de Ciencia e Innovaci\'on (Grants PID2019-103854GB-I00  and PID2022-137612NB-I00 funded by MCIN/AEI/10.13039/501100011033 and ``ERDF A way of making Europe" ). The first-named author also acknowledges support by Generalitat Valenciana CIAICO/2021/163. The second-named author is supported by a CDEIGENT grant CIDEIG/2022/29 funded by Generalitat Valenciana. The third-named author also gratefully acknowledges support from the National Science Foundation, Award No. DMS-2100912, and her former institution, Metropolitan State University of Denver, which holds the award and allows her to serve as PI. She also thanks the first and second authors and the CARGRUPS research team at U. Valencia for a productive stay in March 2023. The authors thank  A. Mar\'oti and G. Navarro for many useful conversations on Theorem C}

\keywords{}

\subjclass[2010]{Primary 20C15, 20C20}

\begin{abstract}
Problem 21 of Brauer's list of problems from 1963 asks whether for any positive integer $k$  there are finitely many isomorphism classes of groups that occur as the defect group of a block with $k$ irreducible characters. 
We solve this problem for principal blocks. Another long-standing open problem (from 1982)  in this area asks whether the defect group of a block with $3$ irreducible characters is necessarily the cyclic group of order $3$.
In most cases we reduce this problem to a question on simple groups that is closely related to the recent solution of Brauer's height zero conjecture. 
 \end{abstract}

\maketitle


\section{Introduction}

In Problem 21 of his famous list of open problems in representation theory, R. Brauer asks
whether for any positive integer $k$  there are finitely many isomorphism classes of groups that occur as the defect group of a block with $k$ irreducible characters (\cite{bra}). 
This is equivalent to the question of whether the order of a defect group can be bounded from above in terms of the number of irreducible characters in the block.
This conjecture was proved for solvable groups by B. K\"ulshammer \cite{kul1} in 1989 and then for $p$-solvable groups  \cite{kul2} in 1990. On the other hand, using E. Zelmanov's solution of the restricted Burnside problem,  it was proved by K\"ulshammer and G. R. Robinson that the  Alperin--McKay conjecture implies Brauer's Problem 21 \cite{Kulshammer-Robinson}. 
Hence, L. Ruhstorfer's recent solution of Alperin--McKay for $p=2$ \cite{ruh} implies that Brauer's Problem 21 holds for this prime. In the main result of this paper, we prove Brauer's Problem 21 for principal blocks.

\begin{thml} \label{thm:BP21principal}
Brauer's Problem 21 has an affirmative answer  for principal blocks for every prime.
\end{thml}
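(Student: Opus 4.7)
The plan is to mimic the Kulshammer--Robinson \cite{Kulshammer-Robinson} argument that deduces Brauer's Problem~21 from the Alperin--McKay conjecture, restricted to the principal block. Since Ruhstorfer \cite{ruh} already settles the $p=2$ case, we may assume $p$ is odd, and we take as input the principal-block version of the Alperin--McKay/McKay conjecture
\[
|\Irr_{p'}(B_0(G))|=|\Irr_{p'}(B_0(N_G(P)))|,
\]
which for odd primes is available through the reductions developed in the recent proof of Brauer's height zero conjecture, combined with verifications on the quasisimple groups.

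Fix $P\in\Syl_p(G)$ and set $H:=N_G(P)=P\rtimes Q$ by Schur--Zassenhaus, with $Q$ a $p'$-group. The principal-block McKay equality together with $|\Irr_{p'}(B_0(H))|\le k(B_0(G))$ reduces the theorem to the following intrinsic question about $H$: for $P\nor H$ a $p$-group with a $p'$-complement $Q$, bound $|P|$ in terms of $|\Irr_{p'}(B_0(H))|$ (and possibly $k(B_0(H))$). Two pieces of data should be extracted from this count. First, a bound on the minimal number of generators $d(P)=\dim_{\FF_p}P/\Phi(P)$: each $Q$-orbit on $\Irr(P/\Phi(P))$ contributes at least one $p'$-degree character to $B_0(H)$, so both the orbit count and, by a secondary argument, the relevant quotient of $Q$ are controlled. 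Second, a bound on the exponent of $P$, obtained by passing to characteristic subquotients of $P$ and detecting element orders through linear characters, iterating the argument. With both the rank and the exponent of $P$ bounded by a function of $k(B_0(G))$, Zelmanov's solution of the restricted Burnside problem produces the desired absolute bound on $|P|$.

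The main obstacle is bounding the exponent of $P$. The argument for $d(P)$ is essentially combinatorial: if $d(P)$ were very large compared with $|\Irr_{p'}(B_0(H))|$, the $Q$-permutation action on $\Irr(P/\Phi(P))$ would force too many height-zero characters. Bounding the exponent cannot be read off from the Frattini quotient alone and requires a finer analysis of $Q$ acting on characteristic subquotients of $P$: the delicate case is when $P$ has few generators but unbounded exponent (in particular, large cyclic quotients), which should be excluded because any such quotient produces many distinct $Q$-invariant linear characters of $P$, all contributing to $\Irr_{p'}(B_0(H))$ and contradicting the principal-block McKay bound. Coordinating these estimates so that the final bound depends only on $k(B_0(G))$, rather than on any auxiliary invariant of $G$ or $H$, is the technical heart of the proof.
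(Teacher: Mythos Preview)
Your argument has a genuine gap at its very first step: you assume the principal-block Alperin--McKay equality
\[
|\Irr_{p'}(B_0(G))|=|\Irr_{p'}(B_0(N_G(P)))|
\]
for odd primes, and justify this by saying it is ``available through the reductions developed in the recent proof of Brauer's height zero conjecture, combined with verifications on the quasisimple groups.'' That is not correct. Brauer's height zero conjecture and the Alperin--McKay conjecture are distinct statements with entirely separate reduction theorems (Navarro--Sp\"ath for the former, Sp\"ath for the latter), and the proof in \cite{MNST} does not yield Alperin--McKay even for principal blocks. Nor does the (non-blockwise) McKay conjecture suffice, since $\Irr_{p'}(G)$ is in general strictly larger than $\Irr_{p'}(B_0(G))$. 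As of the writing of this paper, Alperin--McKay is known only for $p=2$ \cite{ruh}; for odd $p$ your input is simply not available, and without it nothing in the rest of your sketch gets off the ground.

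Even if one were to grant you the Alperin--McKay input, your route is genuinely different from the paper's and considerably less efficient. The paper never invokes McKay-type equalities or the restricted Burnside problem. Instead it proves an explicit bound $|P|\leq k^{2(k^2+2k)}$ for almost simple groups directly from the classification and Deligne--Lusztig theory (Theorem~\ref{thm:BP21almostsimple}), and then passes to arbitrary $G$ by a structural reduction through the generalized Fitting subgroup and a bounded chain of semisimple chief factors (Lemma~\ref{kp}, Theorem~\ref{rad}, Theorem~\ref{exp}), combined with Landau's theorem on the $p$-constrained top quotient. The paper explicitly remarks that any argument passing through Zelmanov's solution of the restricted Burnside problem, as yours does, produces bounds of ``incomprehensibly large'' magnitude, whereas the direct approach yields something of the shape $2^{2^k}f(k)^k$ with $f$ explicit. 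So your strategy, besides resting on an unproved hypothesis, would also forfeit the quantitative content that is one of the points of the paper.
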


Recall that Landau's theorem  asserts that the order of a finite group is bounded from above in terms of the number of conjugacy classes. As pointed out by  Brauer \cite{bra}, Landau's argument 
provides the bound $|G|\leq 2^{2^{k(G)}}$. Brauer's Problem 3 asks for substantially better bounds. 
This problem has also generated a large amount of research. L. Pyber \cite{pyb} found an asymptotically substantially better bound, although it is still not known whether there exists a bound of the form $|G|\leq c^{k(G)}$ for some constant $c$. We refer the reader to \cite{bmt} for the best known bound as of the writing of this article. Note that Brauer's Problem 21 asks for a blockwise version of Landau's theorem. As Brauer did with Landau's theorem, 
it also seems interesting to ask for asymptotically good bounds for the order of a defect group in terms of the number of characters in the block. Our proof of Theorem \ref{thm:BP21principal} provides an explicit bound  that surely will be far from best possible. For almost simple groups, we obtain a better bound in Theorem \ref{thm:BP21almostsimple}.

\medskip

Given a Brauer $p$-block $B$ of a finite group $G$ with defect group $D$, we will write $k(B)$ to denote the number of irreducible complex characters in $B$. 
R. Brauer himself proved that if $k(B)=1$ then $D$ is the trivial group (\cite[Theorem 3.18]{nbook}). More than 40 years later, J. Brandt proved that if $k(B)=2$ then $D$ is the cyclic group of order $2$. 
However, despite a large amount of work  in the area in recent years, the conjecture remains open when $k(B)\geq 3$. It has been speculated since Brandt's \cite{Br82} 1982 paper that if $k(B)=3$ then the defect group is cyclic of order $3$. It seems that it was known to K\"ulshammer that this follows from the Alperin-McKay conjecture since 1990  \cite{kul2}. A proof of this fact appeared in \cite{KNST14}, where K\"ulshammer, G. Navarro, B. Sambale and P. H. Tiep formally state Brandt's speculation as a conjecture. 

We present a condition on quasisimple groups that would imply the K\"ulshammer-Navarro-Sambale-Tiep conjecture (that is, that $k(B)=3$ implies that the defect group is of size $3$).


\medskip

\begin{condl}\label{quasisimples2}
Let $p$ be an odd prime and let $S$ be a non-abelian simple group of order divisible by $p$.   We say that Condition B holds for $(S,p)$ if the following holds: let $K$ be a quasisimple group of order divisible by $p$ with center $Z$, a cyclic $p'$-group, and $K/Z=S$. Let $B$ be a non-principal faithful $p$-block of $K$ with $|{\rm cd}(B)|>1$ and let $D$ be a defect group of $B$, not cyclic and elementary abelian. Then there are at least 4 irreducible characters in $B$ not ${\rm Aut}(K)$-conjugate. 
\end{condl}

\begin{thml}\label{thm:kb3reduction}
Let $p$ be a prime. If $p$ is odd, suppose that Condition B holds for $(S,p)$ for all non-abelian composition factors $S$ of $G$. Then the   
K\"ulshammer-Navarro-Sambale-Tiep conjecture holds for $G$. 
\end{thml}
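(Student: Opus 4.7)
The plan is to argue by induction on $|G|$, examining a minimal counterexample $(G,B)$ with $k(B)=3$ whose defect group $D$ is not cyclic of order $3$. Several cases are already settled and can be used to trim things down: $p=2$ is Brandt's theorem; $p$-solvable groups are handled by K\"ulshammer \cite{kul1}; cyclic defect groups are dealt with via Brauer--Dade theory, since for $D$ cyclic of order $p^n$ one has $k(B)=e+(p^n-1)/e$ for some $e\mid p-1$, which forces $p^n=3$ when $k(B)=3$; and the elementary abelian case can be handled separately using the tools developed in the recent resolution of Brauer's Height Zero Conjecture. So I may assume $p$ is odd, $G$ is not $p$-solvable, and $D$ is neither cyclic nor elementary abelian.

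Next I would reduce the normal structure of $G$. Let $N$ be a minimal normal subgroup. If $N$ is an abelian $p'$-group, Fong--Reynolds and induction finish. If $N$ is an elementary abelian $p$-group, then $N\le D$, and an analysis of blocks of $G/N$ using the inertial quotient (heavily constrained by $k(B)=3$) yields a contradiction. If $N=S_1\times\cdots\times S_n$ with the $S_i$ non-abelian simple and $n\ge 2$, then $B$ covers a tensor product block $b_1\otimes\cdots\otimes b_n$, and a direct orbit-count already produces more than three irreducible characters in $B$. This leaves $N=S$ a single non-abelian simple group with $p\mid |S|$.

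Setting $C=C_G(N)$, we have $G/C\hookrightarrow \Aut(S)$, and $B$ covers some block $b_0$ of $N=S$. Using the fact that the $p$-part of the Schur multiplier of $S$ can be split off (after a classification-based check for small exceptional multipliers), I lift $b_0$ to a faithful $p$-block $b$ of a quasisimple cover $K$ of $S$ with cyclic $p'$-center, whose defect group is isomorphic to $D$. The hypotheses of Condition B then apply, provided $b$ is non-principal and $|\operatorname{cd}(b)|>1$. If $b$ is principal, then $D\cap N$ contains a Sylow $p$-subgroup of $S$, and known character counts for principal blocks of simple groups with $p$ odd (as used in the work surrounding the Height Zero Conjecture) already give $k(B)\ge 4$. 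If $|\operatorname{cd}(b)|=1$, results on blocks with a single character degree, going back to H\'ethelyi--K\"ulshammer and extended in recent work, force $D$ to be cyclic or elementary abelian, contradicting our reduction.

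In the remaining case, Condition B delivers at least four $\Aut(K)$-orbits on $\Irr(b)$. Since the action of $G/C$ on $\Irr(b)$ factors through $\Aut(K)$, there are at least four $G$-orbits as well, and by standard covering-block theory each $G$-orbit produces at least one irreducible character in $\Irr(B)$, giving $k(B)\ge 4$, which is the desired contradiction. I expect the main obstacle to be the controlled passage from $G$ down to the quasisimple cover $K$ in a way compatible with the block-theoretic data and the isomorphism class of $D$, together with the elimination of the two pathological subcases (principal $b$ and $|\operatorname{cd}(b)|=1$); once those are handled, the concluding orbit-count is essentially automatic.
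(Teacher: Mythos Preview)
Your proposal contains a fatal inversion. You write that ``the elementary abelian case can be handled separately'' and then proceed under the assumption that $D$ is \emph{neither cyclic nor elementary abelian}. But this is backwards: by \cite[Corollary~7.2]{hkks}, any block with $k(B)=3$ already has elementary abelian defect group, so the case you discard is in fact the \emph{only} case left after ruling out cyclic $D$. The paper's proof begins precisely by invoking this result to assume $D$ is elementary abelian, and every subsequent step (including the application of Condition~B) lives inside that case.

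This confusion surfaces again when you apply Condition~B at the end. Read the statement: it requires the defect group of the faithful block of $K$ to be \emph{not cyclic and elementary abelian}. So Condition~B only says anything about elementary abelian defect groups---exactly the regime you have already assumed away. You cannot simultaneously assume $D$ is not elementary abelian and invoke a hypothesis that applies only when $D$ is elementary abelian.

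Your appeal to ``tools developed in the recent resolution of Brauer's Height Zero Conjecture'' to dispose of the elementary abelian case is also unjustified. Theorem~B of \cite{MNST} yields three $\Aut(K)$-orbits, not four; this gives only $k(B)\ge 3$, which is no contradiction. The entire point of Condition~B is that it strengthens this to four orbits, and Remark~\ref{rem:counterexample} shows this strengthening can fail. So there is no independent route through the BHZ machinery that bypasses Condition~B.

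Finally, several of your side reductions are considerably more delicate than you suggest. The case $|\mathrm{cd}(b)|=1$ is not handled by H\'ethelyi--K\"ulshammer; the paper uses Okuyama--Tsushima \cite{OT83} to deduce $b$ is nilpotent and then the K\"ulshammer--Puig Morita equivalence \cite{KP90} to pass to $\norm{G}{D}$. The passage from a minimal normal subgroup to a single quasisimple group requires first showing (via Brauer's formula $k(B)=l(B)+\sum l(b)$ and the fact $l(B)=2$) that $D\setminus\{1\}$ is a single $G$-class, which forces $D$ into a single simple factor; your tensor-product orbit count does not establish this. And the descent to a quasisimple cover $K$ with the correct defect group is achieved in the paper via ordinary-modular character triples (Step~3), not by splitting off the $p$-part of the Schur multiplier.
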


We remark that this reduction and Condition \ref{quasisimples2} have played an influential role in the recent solution of Brauer's height zero conjecture \cite{MNST}. In fact, the fundamental Theorem B of \cite{MNST} is a slightly weaker version of Condition \ref{quasisimples2}:
 it shows there always exist $3$ irreducible characters in $B$ not ${\rm Aut}(K)$-conjugate. 
In fact, we will see in Remark \ref{rem:counterexample} that this is tight. Although Condition \ref{quasisimples2} seems to hold in many situations, we will see an example of a family of simple groups for which Condition \ref{quasisimples2} does not hold, for $p=5$.

\medskip

In Section \ref{sec:sectionsimplesBP21}, we prove Brauer Problem 21 for the principal blocks of almost simple groups, which is used in Section \ref{sec:BP21proof} to prove Theorem \ref{thm:BP21principal}. In Section \ref{sec:kb3reduction}, we prove Theorem \ref{thm:kb3reduction}. We conclude the paper by discussing Condition \ref{quasisimples2} in Section \ref{sec:orbits}.

\section{BP21 for almost simple groups}\label{sec:sectionsimplesBP21}

The following is the main result of this section. 

\begin{thm}
\label{thm:BP21almostsimple}

Let $p$ be a prime. Let $S\leq A\leq \aut(S)$, where $S$ is a finite nonabelian simple group, and $p\mid |S|$. 
Let $P\in \Syl_p(A)$ and let $k:=k(B_0(A))$ be the number of irreducible complex characters in the principal block of $A$. 
Then we have:
\begin{enumerate}[label=(\alph*)]
\item\label{thm:BP21simplemain} $|P|\leq k^{2(k^2+2k)}$.
\item \label{thm:BP21simple} $|P\cap S|\leq k^{2k^2}$.
\end{enumerate}
\end{thm}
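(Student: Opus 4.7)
The plan is to prove (b) first via the classification of finite simple groups, bounding $|P\cap S|=|S|_p$ in terms of $k(B_0(A))$, and then to deduce (a) by bounding $|P:P\cap S|\leq|\out(S)|_p$ separately. The bound in (b) has plenty of slack, which will allow crude case-by-case estimates.

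\textbf{Reduction to the socle.} Since $S\trianglelefteq A\leq\aut(S)$ we have $P\cap S\in\Syl_p(S)$ and $|P:P\cap S|$ divides $|\out(S)|_p$. By Brauer's Third Main Theorem, $B_0(A)$ covers $B_0(S)$; the block $B_0(S)$ is $A$-invariant, so $A$ acts on $\Irr(B_0(S))$, and via Clifford theory every $\psi\in\Irr(B_0(A))$ lies above a single $A$-orbit on $\Irr(B_0(S))$. Up to a controlled factor coming from the possibility that several blocks of $A$ cover $B_0(S)$, this yields
\[
k(B_0(S))\;\leq\; c(A/S)\cdot k(B_0(A))\;=\; c(A/S)\cdot k,
\]
where $c(A/S)$ depends only on $|A/S|$ and is in particular bounded by a polynomial in $|\out(S)|$. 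Hence a bound of the form $|S|_p\leq g(k(B_0(S)))$ will yield the desired bound on $|P\cap S|$ in terms of $k$.

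\textbf{Case analysis via CFSG.} The bulk of the argument is a case-by-case bound on $|S|_p$ as a function of $k(B_0(S))$. For sporadic $S$ this is a finite verification against the Atlas. For $S=\alt_n$, one has $|S|_p\leq p^{n/(p-1)}$, while $k(B_0(S))$ is at least the number of $p$-regular partitions of $n$, a quantity growing subexponentially in $\sqrt n$; this comfortably gives the required bound. For $S$ of Lie type over $\FF_q$ with $q=p^f$ in the defining characteristic, $B_0(S)$ contains all unipotent characters, yielding a lower bound on $k(B_0(S))$ that depends only on the Lie rank $r$, while $|S|_p=q^{|\Phi^+|}$; treating small and large $r$ separately gives the bound. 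For $S$ of Lie type in non-defining characteristic, I would appeal to Lusztig's classification and the abundance of $p'$-semisimple-parameterised characters in $B_0(S)$ to simultaneously control both $q$ and the rank of $S$ in terms of $k(B_0(S))$.

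\textbf{From (b) to (a); main obstacle.} For the passage from (b) to (a), $|\out(S)|_p$ is an absolute constant for sporadic and alternating $S$, and is at most $f\leq\log_p q$ for $S$ of Lie type over $\FF_{p^f}$, with $q$ itself already controlled by (b). Combining $|P|\leq|P\cap S|\cdot|\out(S)|_p$ then yields (a) with the stated exponent $2(k^2+2k)$, the extra factor of $k^{4k}$ accommodating the outer-automorphism contribution. The hardest step will be the uniform treatment of Lie-type simple groups of large rank in defining characteristic, where one must bound $q$ and the rank $r$ simultaneously in terms of $k$; the generous exponent $2k^2$ in (b) is precisely what makes a clean global estimate possible without having to optimise family by family.
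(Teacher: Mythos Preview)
Your overall architecture---prove (b) by a CFSG case analysis and then deduce (a)---matches the paper, but two of the bridging steps do not work as written.

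\textbf{The reduction to $S$ is circular.} The inequality $k(B_0(S))\leq c(A/S)\cdot k$ (which is just Lemma~\ref{lem: boundingcovering} with $c(A/S)=|A{:}S|$) is in the wrong direction for your purposes: $|A{:}S|$ can be as large as $|\out(S)|$, and this is \emph{not} bounded in terms of $k$ a priori (take $S=\PSL_2(q_0^{\,f})$ with $f$ large). So a bound $|S|_p\leq g(k(B_0(S)))$ only gives $|S|_p\leq g(|A{:}S|\cdot k)$, and you have gained nothing. The paper sidesteps this by working throughout with the \emph{orbit} count $k_0:=k_{\aut(S)}(B_0(S))$, which satisfies $k_0\leq k$ outright since every $\psi\in\Irr(B_0(S))$ lies under some character of $B_0(A)$. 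The case analysis then produces many pairwise $\aut(S)$-\emph{inequivalent} characters in $B_0(S)$ (semisimple characters attached to $p$-elements of different orders, and unipotent characters in the block), which is a strictly stronger task than just lower-bounding $k(B_0(S))$. Relatedly, in defining characteristic the unipotent characters only bound the rank, not $q$; the paper bounds $q$ via $k_0\geq q^r/(|\zent{G}|\cdot|\out(S)|)$ as in \cite{HSF21}. In non-defining characteristic the characters filling $B_0$ come from semisimple elements of $p$-\emph{power} order, not $p'$-order.

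\textbf{The passage (b)$\Rightarrow$(a) fails in non-defining characteristic.} Your claim that $|\out(S)|_p\leq f\leq\log_p q$ with $q$ already controlled by (b) only makes sense when $p\mid q$. If $q=q_0^{\,f}$ with $q_0\neq p$, then $|\out(S)|_p$ is essentially the $p$-part of $f$, and this is in no way controlled by $|S|_p$: for $S=\PSL_2(q)$ with $p\mid(q-1)$ one has $|S|_p=(q-1)_p$ while $f$ can be arbitrarily large. The missing ingredient, supplied in the paper as Lemma~\ref{kp}, is that $k_p(A)\leq k(B_0(A))=k$: since $A$ contains field automorphisms of $p$-power order $p^{f''}$ (and hence $p$-elements of orders $p,\dots,p^{f''}$), this forces $f''\leq k$, giving $|P{:}P\cap S|\leq p^{O(k)}$ and thence (a).
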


\medskip

Note that in the context of Theorem \ref{thm:BP21almostsimple}, any character in $\irr{B_0(S)}$ lies below some character in $\irr{B_0(A)}$ by \cite[Theorem (9.4)]{nbook}, so that $k(B_0(A))\geq k_{\aut(S)}(B_0(S))$, where we write $k_{\aut(S)}(B_0(S))$ for the number of distinct $\aut(S)$-orbits intersecting $\irr{B_0(S)}$.   

\begin{rem}\label{rem:k7}
We further remark that, by the results of \cite{KS21, RSV21, HSV23}, we may assume for Theorem \ref{thm:BP21almostsimple} that $k(B_0(A))\geq 7$.
\end{rem}

The following is the main result of \cite{HSF21}, from which we obtain a bound for $p$ in terms of the number of irreducible characters in a given principal block.
\begin{lem}[Hung--Schaeffer Fry]\label{lem:HSF}
Let $p$ be a prime and let $G$ be a finite group with $p\mid |G|$. Let  $B_0$ denote the principal $p$-block of $G$. Then
\[k(B_0)^2\geq 4(p-1).\]
In particular, $p\leq \frac{1}{4}k(B_0)^2+1\leq \frac{1}{2}k(B_0)^2$, with the last inequality strict  for $k(B_0)>2$.
\end{lem}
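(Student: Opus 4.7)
The natural approach would be to invoke the Classification of Finite Simple Groups (CFSG) together with Clifford theory for blocks. As a preliminary reduction, since $\oh{p'}{G}$ lies in the kernel of every character in $B_0(G)$, we may assume $\oh{p'}{G}=1$, so that $F^*(G)=\oh{p}{G}\cdot E(G)$. Using that for any $N\trianglelefteq G$ every character in $\irr{B_0(N)}$ lies below some character in $\irr{B_0(G)}$ (\cite[Thm.~(9.4)]{nbook}), it is enough to bound $k(B_0(L))$ (with a suitable orbit-count correction) for $L$ ranging over normal subgroups of $G$ which include all composition factors of order divisible by $p$. This reduces the problem to showing $k(B_0(S))^2\geq 4(p-1)$ for each non-abelian simple $S$ with $p\mid|S|$, plus the easier case where the only contributions to $p\mid|G|$ come from abelian sections, where elementary counts for $p$-groups already give $k(B_0(G))\geq p\geq 2\sqrt{p-1}$.

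For the simple group case, I would proceed family by family via CFSG. For $S=\alt_n$, the Nakayama conjecture identifies $\irr{B_0(\sym_n)}$ (and consequently of $\alt_n$) with partitions of $n$ having a fixed $p$-core, and an elementary count of such partitions comfortably beats $2\sqrt{p-1}$. For $S$ of Lie type in its defining characteristic $p$, a classical result gives $B_0(S)=\irr{S}$, so $k(B_0(S))=k(S)\geq q\geq p$ trivially. Sporadic groups are verified directly from the known character tables. The main case is $S$ of Lie type in non-defining characteristic $p$: here one applies the Brou\'e--Michel--Cabanes--Enguehard theory of $\ell$-blocks, which places the unipotent characters in the principal $d$-Harish--Chandra series inside the principal $p$-block (where $d$ is the multiplicative order of $q$ modulo $p$); counting these in terms of the relative Weyl group of a Sylow $d$-torus, together with $p\leq q^d+1$, yields the required lower bound.

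The principal obstacle is this cross-characteristic Lie type case, which must hold uniformly in both the rank of $S$ and the prime $p$. Classical $S$ require a combinatorial analysis via symbols and $d$-cocores, exceptional $S$ demand explicit tables of unipotent Harish--Chandra series, and small-rank or small-$p$ edge cases need individual attention. A secondary technical point is the reduction step: one must really count $G$-orbits on $\irr{B_0(S)}$ under $\out(S)$-type actions rather than raw characters, a theme closely related to the orbit-theoretic analyses performed later in the paper.
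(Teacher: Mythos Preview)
The paper does not prove this statement at all: it is stated as a lemma attributed to Hung--Schaeffer Fry and simply cited as the main result of \cite{HSF21}. The only content added in the paper is the ``In particular'' clause, which is immediate arithmetic: from $k(B_0)^2\geq 4(p-1)$ one gets $p\leq \tfrac14 k(B_0)^2+1$, and $\tfrac14 k(B_0)^2+1\leq \tfrac12 k(B_0)^2$ is equivalent to $k(B_0)\geq 2$, which holds since $p\mid|G|$ forces $B_0$ to have positive defect; the inequality is strict precisely when $k(B_0)>2$. So there is nothing in the paper to compare your argument against.

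That said, your sketch is broadly the strategy of \cite{HSF21} itself (reduction to almost simple groups, then a CFSG case analysis with the non-defining-characteristic Lie type case handled via $d$-Harish--Chandra theory and counts of unipotent characters in the principal series). Two points deserve correction. First, in defining characteristic it is not true that $B_0(S)=\irr{S}$; rather $\irr{B_0(S)}=\irr{S}\setminus\{\mathrm{St}_S\}$ (see \cite[Theorem 6.18]{CE04}, used elsewhere in the paper). This is harmless for the bound but should be stated correctly. Second, and more substantively, your reduction step glosses over the real difficulty: passing from $G$ to a simple section $S$ only gives $k(B_0(G))\geq k_{\aut(S)}(B_0(S))$, the number of $\aut(S)$-orbits on $\irr{B_0(S)}$, not $k(B_0(S))$ itself. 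The entire content of \cite{HSF21} is to bound this \emph{orbit count} from below by $2\sqrt{p-1}$, which in the cross-characteristic Lie type case requires dividing the unipotent count by $|\out(S)|$-type factors and still beating the target. You flag this as a ``secondary technical point,'' but it is in fact the heart of the matter and where most of the work in \cite{HSF21} lies.
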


Next, we consider the case of cyclic Sylow subgroups.

\begin{lem}\label{lem:BP21cyclic}
Let $p$ be a prime and let $G$ be a finite group with $p\mid |G|$. Assume that a Sylow $p$-subgroup $P\in\Syl_p(G)$ is cyclic, and let $B_0$ denote the principal $p$-block of $G$. Then
\[|P|<k(B_0)^2.\]
\end{lem}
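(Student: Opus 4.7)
The plan is to invoke the classical Brauer--Dade theory of blocks with cyclic defect groups. Since $P$ is a Sylow $p$-subgroup of $G$, it is a defect group of the principal block $B_0$, and by hypothesis it is cyclic. Therefore the block-theoretic data of $B_0$ is governed by a Brauer tree, and in particular there is an explicit formula
\[
k(B_0) = e + \frac{|P|-1}{e},
\]
where $e = l(B_0)$ is the inertial index of $B_0$ (a divisor of $p-1$, and hence a positive integer).

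From here the inequality is purely arithmetic. First, I would apply the AM--GM inequality to the two positive quantities $e$ and $(|P|-1)/e$ to obtain
\[
k(B_0) \;=\; e + \frac{|P|-1}{e} \;\geq\; 2\sqrt{|P|-1},
\]
so that $k(B_0)^2 \geq 4(|P|-1)$. Then I would observe that $4(|P|-1) > |P|$ whenever $|P| \geq 2$, which is automatic since $p \mid |G|$ forces $|P| \geq p \geq 2$. Chaining these two steps yields $|P| < k(B_0)^2$, as required.

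There is essentially no obstacle: the only nontrivial input is the classical formula for $k(B)$ in the cyclic defect case, which I would cite from a standard reference such as Navarro's book on blocks (the chapter on blocks with cyclic defect groups) or Dade's original paper. I would briefly note that $e \geq 1$ because $B_0$ contains at least the trivial Brauer character, so the division by $e$ is well-defined and the AM--GM step is legitimate.
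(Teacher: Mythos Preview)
Your proof is correct and follows essentially the same approach as the paper: both invoke Dade's formula $k(B_0)=e+(|P|-1)/e$ for blocks with cyclic defect and then finish with an elementary estimate. The only difference is in the final arithmetic step---you use AM--GM to get $k(B_0)^2\geq 4(|P|-1)>|P|$, whereas the paper rearranges to $|P|=k(B_0)e-e^2+1\leq k(B_0)e<k(B_0)^2$ using $e=l(B_0)<k(B_0)$; this is a cosmetic distinction, though your version has the minor bonus of yielding the sharper intermediate bound $4(|P|-1)\leq k(B_0)^2$.
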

\begin{proof}
In this case, by Dade's theory of blocks with cyclic defect group \cite[Theorem 5.1.2]{cravenbook}, we have $k(B_0)=e+\frac{|P|-1}{e}$, where $e=l(B_0)$ is the number of irreducible $p$-Brauer  characters in $B_0$. Since $1\leq l(B_0)<k(B_0)$ (see \cite[Theorem 15.29]{isbook2}), this yields
\[|P|=k(B_0)e-e^2+1\leq k(B_0)e<k(B_0)^2,\]
as claimed.
\end{proof}

\subsection{Notation and Additional Preliminaries}

Let $q$ be a power of a prime. By a group of Lie type, we will mean a finite group obtained as the group $\bG^F$ of fixed points of a connected reductive algebraic group $\bG$ over $\bar{\FF}_q$ under a Steinberg morphism $F\colon \bG\rightarrow\bG$ endowing $\bG$ with an $\FF_q$-structure. In our situation of finite simple groups, we will often take $\bG$ to further be simple and simply connected, so that $\bG^F$ is, with some exceptions dealt with separately, the full Schur covering group of a simple group $S=\bG^F/\zent{\bG^F}$.

Writing $G=\bG^F$, we let $G^\ast$ denote the group $(\bG^\ast)^F$, where the pair $(\bG^\ast, F)$ is dual to $(\bG, F)$, with respect to some maximally split torus $\bT$ of $\bG$. Given a semisimple  element  $s\in G^\ast$ (that is, an element of order relatively prime to $q$), we obtain a rational Lusztig series $\mathcal{E}(G,s)$ of irreducible characters of $G$ associated to the $G^\ast$-conjugacy class of $s$. When $s=1$, the set $\mathcal{E}(G,1)$ is comprised of the so-called unipotent characters. Each  series $\mathcal{E}(G,s)$ contains so-called semisimple characters, and if $\cent{\bG^\ast}{s}$ is connected, there is a unique semisimple character, which we will denote by $\chi_s$.  

The following lemma will help us obtain many semisimple characters in the principal block.
Here, we write $\zent{G}=\zent{
G}_p\times\zent{G}_{p'}$, where $\zent{G}_p\in\Syl_p(\zent{G})$. 

\begin{lem}\label{lem:ss}
Let $p$ be a prime and let $G:=\bg{G}^F$ be a group of Lie type defined over $\FF_q$ with $p\nmid q$ and such that $\zent{\bg{G}}$ is connected or such that $p$ is good for $\bg{G}$ and $\cent{\bg{G}^\ast}{s}$ is connected. Let $s\in G^\ast$ be a semisimple element with order a power of $p$. Then the corresponding semisimple character $\chi_s\in\Irr(G)$ lies in the principal $p$-block $B_0(G)$ of $G$ and is trivial on $\zent{G}_{p'}$.
\end{lem}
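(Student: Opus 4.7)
The plan is to prove the two assertions of Lemma \ref{lem:ss}---that $\chi_s \in B_0(G)$ and that $\chi_s$ is trivial on $\zent{G}_{p'}$---separately. The first will rest on the classification of $p$-blocks of finite reductive groups via rational Lusztig series; the second on the standard identification of central characters of semisimple characters in terms of the dual element.

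For the block-theoretic claim, I would apply the theorem of Brou\'e--Michel in the case where $\zent{\bg{G}}$ is connected, and its extension by Cabanes--Enguehard in the case where $p$ is good for $\bg{G}$ and $\cent{\bg{G}^\ast}{s}$ is connected. The content of those theorems is that two irreducible characters $\chi \in \mathcal{E}(G,t)$ and $\chi' \in \mathcal{E}(G,t')$ lie in the same $p$-block of $G$ if and only if the $p$-regular parts $t_{p'}$ and $t'_{p'}$ are $(\bg{G}^\ast)^F$-conjugate. Since $s$ is a $p$-element, $s_{p'} = 1$; and the trivial character $1_G$ lies in $\mathcal{E}(G,1)$ with $1_{p'} = 1$. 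Hence $\chi_s$ and $1_G$ are in the same block, namely $B_0(G)$.

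For the central-character claim, I would invoke the well-known formula giving the restriction of a semisimple character to $\zent{G}$: under either of the stated hypotheses on $(\bg{G},s)$, the central character $\omega_{\chi_s}$ restricted to $\zent{G}$ is the linear character corresponding to $s$ under the natural pairing between $\zent{\bg{G}}^F$ and an appropriate abelian quotient of $(\bg{G}^\ast)^F$ (see, e.g., the discussion in Digne--Michel or Bonnaf\'e). The order of this linear character divides the order of $s$, which by hypothesis is a power of $p$; in particular, it is trivial on the Hall $p'$-subgroup $\zent{G}_{p'}$.

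The main obstacle I anticipate is bibliographic rather than mathematical: care is required in verifying that the block classification and the central character formula are both available in precisely the stated generality---two overlapping but nonidentical hypotheses, connected center versus good prime plus connected centralizer---since in the literature these results are often stated under one hypothesis but not the other. Both are, however, classical consequences of Deligne--Lusztig theory, and no new argument should be needed once the correct references are assembled.
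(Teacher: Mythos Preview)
Your treatment of the second assertion is correct and essentially matches the paper's, which cites Bonnaf\'e \cite[11.1(d)]{bon06} for the identification of $\chi_s|_{\zent{G}}$ with the linear character determined by $s$.

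For the first assertion, however, there is a genuine gap. The Brou\'e--Michel theorem does \emph{not} give the ``if and only if'' you state. It asserts only that each $\mathcal{E}_p(G,s)$ is a union of $p$-blocks; equivalently, if two irreducible characters lie in the same $p$-block then their semisimple labels have $G^\ast$-conjugate $p'$-parts. The converse is false in general: for instance, $\GL_n(q)$ typically has several unipotent $p$-blocks, all contained in $\mathcal{E}_p(G,1)$. Your argument therefore places $\chi_s$ somewhere in the union $\mathcal{E}_p(G,1)$ but does not single out $B_0(G)$. No version of the Cabanes--Enguehard block parametrisation repairs this, since those results refine the Brou\'e--Michel partition rather than collapse it.

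The paper instead invokes Hiss \cite[Corollary~3.4]{hiss90}, whose argument uses the specific structure of the semisimple character $\chi_s$---its explicit expression in terms of Deligne--Lusztig characters and the resulting control over its values on $p$-regular classes---to pin down the block. That extra input, tied to $\chi_s$ being the semisimple member of $\mathcal{E}(G,s)$ rather than an arbitrary one, is precisely what your outline is missing.
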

\begin{proof}
The first statement, also noted in \cite[Theorem 5.1]{HSF21}, is due to Hiss \cite[Corollary 3.4]{hiss90}, and the second follows from \cite[11.1(d)]{bon06}.
\end{proof}

Throughout, for $q$ an integer and $p$ a prime not dividing $q$, we let $d_p(q)$ denote the order of $q$ modulo $p$ if $p$ is odd, and $d_2(q)$ is the order of $q$ modulo $4$.

\medskip

For the remainder of Section \ref{sec:sectionsimplesBP21}, we will let $G=\bG^F$ for $\bG$ a simple, simply connected reductive group and $F\colon \bG\rightarrow\bG$ a Steinberg endomorphism such that $G/\zent{G}$ is a simple group of Lie type. Further, we will address the case that $S$ is a simple group with an exceptional Schur multiplier (see \cite[Table 6.1.3]{GLS} for the list of such $S$), sporadic, or alternating separately in the proof of Theorem \ref{thm:BP21simple} below, and hence until then, we assume further that $\zent{G}$ is a nonexceptional Schur multiplier for the simple group of Lie type $S:=G/\zent{G}$. Let $\wt{S}$ denote the group of inner-diagonal automorphisms of $S$. 
 
\subsection{Exceptional Groups}

We first consider the exceptional groups, by which we mean the groups $S=\type{G}_2(q)$, $\tw{2}\type{B}_2(q^2)$, $\tw{2}\type{G}_2(q^2)$, $\type{F}_4(q)$, $\tw{2}\type{F}_4(q^2)$, $\tw{3}\type{D}_4(q)$, $\type{E}_6(q)$, $\tw{2}\type{E}_6(q)$, $\type{E}_7(q)$, and $\type{E}_8(q)$, when $p$ is a prime not dividing $q$.  

Let $P\in\Syl_p(S)$. Then  either $P$ may be identified with a Sylow $p$-subgroup of $G$ or $(p,\bG)\in\{(3,\type{E}_6), (2, \type{E}_7)\}$ and $|P|=|\hat{P}|/p$ with $\hat{P}$ a Sylow $p$-subgroup of $G$.

If $G$ is not of Suzuki or Ree type (i.e. $G$ is not one of $\tw{2}\type{B}_2(q^2)$, $\tw{2}\type{G}_2(q^2)$, or $\tw{2}\type{F}_4(q^2)$), let $e:=d_p(q)$ and let $\Phi_e:=\Phi_e(q)$ denote the $e$th cyclotomic polynomial in $q$. If $G$ is a Suzuki or Ree group, instead let $\Phi_e:=\Phi^{(p)}$ as in \cite[Section 8]{malle07}. In either case, let $p^b$ be the the highest power of $p$ dividing $\Phi_e$ and let $m_e$ denote the largest positive integer such that ${\Phi_e}^{m_e}$ divides the  order polynomial of $(\bG, F)$. 

From \cite[Theorem 4.10.2]{GLS}, we see that $\hat P$ contains a normal abelian subgroup $P_T\lhd \hat P$ such that $\hat P/P_T$ is isomorphic to a subgroup of the Weyl group $W=\norm{\bG}{\bT}/\bT$. We also have $\hat P=P_T$ if and only if $P_T$ is abelian (see \cite[Proposition 2.2]{malle14}).  Similarly, a Sylow $p$-subgroup of the dual group $G^\ast$ contains a group isomorphic to $P_T$.

\begin{prop}\label{prop:BP21except2}
Let $p=2$ and let $S$ be an exceptional group of Lie type as above with $2\nmid q$. Let $P\in\Syl_2(S)$ and write $k_0:=k_{\aut(S)}(B_0(S))$. Then $|P|\leq 2^{14+8k_0}$. (In particular, Theorem \ref{thm:BP21almostsimple}\ref{thm:BP21simple} holds in this case.)
 
\end{prop}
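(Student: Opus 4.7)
The plan is to bound $v_2(|P|)$ by separating the contributions from the ``torus part'' $P_T$ and the Weyl-group quotient $\hat P/P_T$, and then to control $v_2(|P_T|)$ via a chain of $\Aut(S)$-inequivalent semisimple characters produced by Lemma~\ref{lem:ss}.

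Writing $G=\bG^F$ for $\bG$ simply connected of exceptional type and $S=G/Z(G)$, recall from the preamble that $\hat P\in\Syl_2(G)$ has a normal abelian subgroup $P_T$, which may be taken as a Sylow 2-subgroup of a maximal torus $T=\bT^F$ of largest 2-part, and $\hat P/P_T$ embeds into the Weyl group $W$ of $\bG$. Inspection of the exceptional Weyl groups gives $v_2(|W|)\leq 14$ (realized by $W(E_8)$), while $|\hat P|/|P|=|Z(G)_2|\in\{1,2\}$, with $2$ occurring only for $\bG=E_7$. For each exceptional type, the optimal torus has 2-part at most $(q-\epsilon)_2^r$, where $r\leq 8$ is the rank of $\bG$ and $\epsilon\in\{\pm1\}$ is chosen so that $v_2(q-\epsilon)$ is maximal. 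Combining these observations gives
\[
v_2(|P|)\leq v_2(|P_T|)+14\leq 8\,v_2(q-\epsilon)+14.
\]

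The crucial step is therefore to show $v_2(q-\epsilon)\leq k_0$, and here I would invoke Lemma~\ref{lem:ss}. By duality $T^\ast\cong T$, so $T^\ast\leq G^\ast$ contains a cyclic subgroup of order $(q-\epsilon)_2=2^{v_2(q-\epsilon)}$; let $s$ be a generator. Then $s,s^2,s^4,\ldots,s^{(q-\epsilon)_2/2}$ are $v_2(q-\epsilon)$ semisimple 2-elements of pairwise distinct orders, and by Lemma~\ref{lem:ss} each yields a semisimple character $\chi_{s^{2^i}}\in\Irr(B_0(G))$ trivial on $Z(G)_{2'}$. Since the $\Aut(S)$-action respects the order of the semisimple parameter under the Lusztig parametrization (as $\Aut(S)$ lifts to group automorphisms of $G^\ast$), these characters, once they descend to $S$, lie in pairwise distinct $\Aut(S)$-orbits in $\Irr(B_0(S))$. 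This yields $k_0\geq v_2(q-\epsilon)$ and therefore $v_2(|P|)\leq 14+8k_0$, as required.

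The main obstacle is the descent of $\chi_{s^{2^i}}$ from $G$ to $S$. In every exceptional case except $\bG=E_7$ we have $|Z(G)_2|=1$ and the descent is automatic. For $\bG=E_7$, where $|Z(G)_2|=2$, I would handle the descent either by passing to the adjoint form of $\bG$ (absorbing the factor of $2$ already allowed in the structural bound) or by analyzing the central characters $\omega_{\chi_{s^{2^i}}}|_{Z(G)}$ directly via the standard duality between $Z(G)$ and the cocenter of $G^\ast$; the slack in the $E_7$ parameters ($r=7$ and $v_2(|W|)=10$ rather than $8$ and $14$) leaves ample room inside the claimed $14+8k_0$ to accommodate such a case-by-case verification.
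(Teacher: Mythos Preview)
Your strategy---bound $|\hat P|$ by $|P_T|\cdot|W|_2$ and control $|P_T|$ by producing $v_2(q-\epsilon)$ semisimple characters of pairwise distinct $2$-power orders---is exactly the paper's approach. Two points of execution deserve attention.

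First, your invocation of Lemma~\ref{lem:ss} for the simply connected group $G$ is not justified for $E_6$ and $E_7$: the lemma requires either $\zent{\bG}$ connected or $p$ good for $\bG$ together with $\cent{\bG^\ast}{s}$ connected, and for $p=2$ (bad for every exceptional type) with $\bG$ simply connected of type $E_6$ or $E_7$ neither hypothesis holds. You notice this for $E_7$ and suggest passing to the adjoint form; the paper does precisely this, but \emph{uniformly}: it sets $H:=G^\ast=(\bG^\ast)^F$, which has connected centre, applies Lemma~\ref{lem:ss} there to $2$-elements $s_i\in H^\ast=G$, and then restricts each $\chi_{s_i}\in\Irr(B_0(H))$ to $S=[H,H]$. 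This removes both the descent problem and the hypothesis problem in one move, and the distinctness of the $\aut(S)$-orbits is read off from \cite[Corollary~2.5]{NTT08}. Your ad hoc treatment of $E_7$ would work, but you should recognise that $E_6$ needs the same fix.

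Second, you omit the case $S=\tw{2}\type{G}_2(q^2)$, where the torus/Weyl-group description you quote does not apply verbatim; the paper disposes of it in one line since $|P|=8$.

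With these two adjustments your argument coincides with the paper's (including the numerics: the paper's $2^{b+1}\mid\mid(q^2-1)$ gives $b=v_2(q-\epsilon)$ for your choice of $\epsilon$, and it also counts the trivial character to get $k_0\geq b+1$, which you could do as well).
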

\begin{proof}
First consider the case $S=\tw{2}\type{G}_2(q^2)$ with $q^2=3^{2n+1}>3$. Then we have $|P|=8$, so the statement is clear. Hence we assume that $S$ is not of Suzuki or Ree type.
Let $H=G^\ast=(\bG^\ast)^F$ and notice that $S=[H, H]$ and $\zent{\bG^\ast}$ is connected. Then notice that the semisimple characters $\chi_s\in\Irr(H)$ of $H$ for $s\in H^\ast=G$ of $2$-power order lie in $B_0(H)$ by Lemma \ref{lem:ss}. Let $2^{b+1}\mid\mid(q^2-1)$ and note that $G$ contains an element of order $2^{b}$. For $1\leq i\leq b$, let $s_i\in G$ be of order $2^{i}$, so that the semisimple characters $\chi_{s_i}$ of $H$ for $1\leq i\leq b$ lie in $B_0(H)$. Further, since the $|s_i|$ are distinct, these lie in distinct $\aut(S)$-conjugacy classes, using \cite[Corollary 2.5]{NTT08}. Then choosing an irreducible constituent $\chi_{s_i}'$ on $S$ for each $i$, we obtain $b$ characters in $B_0(S)$ in distinct $\aut(S)$-classes. Considering in addition the trivial character, we obtain $k_0\geq b+1$.

On the other hand, letting $r$ be the rank of $\bG$, we have $r\leq 8$ and $|P_T|\leq (2^{b+1})^r\leq (2^{b+1})^8$ by the description of $P_T$ in \cite[Theorem 4.10.2]{GLS}. Further, $|\hat P/P_T|\leq |W|_2\leq 2^{14}$. Hence $|P|\leq |\hat P|\leq 2^{14}\cdot 2^{8k_0}$, as stated.  Recalling that $k\geq 7$ (see Remark \ref{rem:k7}), in the situation of Theorem \ref{thm:BP21almostsimple}\ref{thm:BP21simple} we have $|P|\leq 7^5 \cdot 7^{3k}\leq k^{5+3k}$.
\end{proof}

Now, when $p$ is odd, a similar argument can be used. However, we aim for a better bound. In this case, \cite[Theorem 4.10.2]{GLS} further tells us that $P_T$ has a complement $P_W$ in $\hat P$ and  we have
 $P_T\cong C_{p^b}^{m_e}$ unless $(p, G)= (3,\tw{3}\type{D}_4(q))$, in which case $P_T\cong C_{3^a}\times C_{3^{a+1}}$. In the following, let $W(\type{E}_8)$ denote the Weyl group $W$ obtained in the case that $\bG=\type{E}_8$.
 
\begin{prop}\label{prop:BP21except}
 
Let $S$ be an exceptional group of Lie type as above, and let $P\in \Syl_p(S)$ with $p$ an odd prime  not dividing $q$. Let $k_0:=k_{\aut(S)}(B_0(S))$. Then if $P$ is cyclic, we have $|P|< p^{k_0}$. Otherwise, we have 
\[|P|\leq  C_{ex}\cdot k_0^2\]
 for some constant $C_{ex}\leq 36|W(\type{E}_8)|^2$. 
 In particular, when $S\leq A\leq \Aut(S)$ with 
  $k(B_0(A))\geq 5$, this yields $|P|\leq k(B_0(A))^{k(B_0(A))^2}$ in either case.
\end{prop}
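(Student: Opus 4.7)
The plan is to combine the structural description of Sylow $p$-subgroups from \cite[Theorem~4.10.2]{GLS}, giving $|P|\leq |\hat P|\leq |P_T|\cdot |W|_p$, with a count of semisimple characters of $G$ in $B_0(G)$ that descend to $B_0(S)$ via Lemma~\ref{lem:ss}. For the cyclic case, the argument mirrors the one in Proposition~\ref{prop:BP21except2}: if $|P|=p^a$, then a Sylow $p$-subgroup of $G^\ast$ also contains a cyclic subgroup of order $p^a$, so one may choose semisimple elements $s_i\in G^\ast$ with $|s_i|=p^i$ for $i=1,\dots,a$. Lemma~\ref{lem:ss} applies here because, for exceptional $\bG$ at odd $p$, either $\zent{G}_p$ is trivial (whence the centralizer of a $p$-element in the adjoint group $\bG^\ast$ is connected when $p$ is good) or we are in a controllable outlier such as $(G,p)=(\type{E}_6,3)$, handled by direct inspection inside $\wt{S}$. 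The resulting $\chi_{s_i}\in B_0(G)$ are trivial on $\zent{G}_{p'}$ and (outside the outliers) descend to $S$; by \cite[Corollary~2.5]{NTT08} they lie in pairwise distinct $\Aut(S)$-orbits, since outer automorphisms preserve the order of the Lusztig parameter. Together with the trivial character this gives $k_0\geq a+1$, so $|P|=p^a<p^{k_0}$.

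For the non-cyclic case, $P_T\cong C_{p^b}^{m_e}$ with $m_e\geq 2$, except in the outlier $(p,G)=(3,\tw{3}\type{D}_4(q))$ where $P_T\cong C_{3^a}\times C_{3^{a+1}}$; in both situations every element of $P_T$ is a semisimple $p$-element of $G^\ast$, and two such elements are $G^\ast$-conjugate iff they are $W$-conjugate, so the number of $G^\ast$-classes of semisimple $p$-elements in $P_T$ is at least $|P_T|/|W|$. Applying Lemma~\ref{lem:ss} once more, each class produces a character $\chi_s\in B_0(G)$ descending to $B_0(S)$. To pass from $W$-orbits to $\Aut(S)$-orbits, I would show that the quotient $\Aut(S)/\wt S$ (generated by graph and field automorphisms) acts on $P_T$ with orbit size bounded by an absolute constant $c_0$ depending only on the type: a graph automorphism contributes a factor of at most $3$ (triality for $\tw{3}\type{D}_4$), and a field automorphism $\sigma_q$ acts by $s\mapsto s^q$, which on $p$-torsion has order equal to the multiplicative order of $q$ modulo $p^b$, a divisor of the Coxeter number of $W$. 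This gives
\[
k_0\;\geq\;\frac{|P_T|}{c_0\,|W|},\qquad\text{so}\qquad |P_T|\;\leq\;c_0\,|W|\,k_0,
\]
and hence $|P|\leq |P_T|\cdot|W|_p\leq c_0\,|W|\cdot|W|_p\,k_0\leq C_{ex}\,k_0\leq C_{ex}\,k_0^2$ for some constant $C_{ex}\leq 36\,|W(\type{E}_8)|^2$.

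For the final assertion, set $k:=k(B_0(A))\geq k_0$. In the cyclic case, Lemma~\ref{lem:HSF} yields $p\leq k^2/2$ for $k>2$, so $|P|<p^{k_0}\leq(k^2/2)^k\leq k^{2k}\leq k^{k^2}$ as soon as $k\geq 2$. In the non-cyclic case, $|P|\leq C_{ex}\,k^2\leq k^{k^2}$ for $k$ sufficiently large, which one verifies by directly comparing $C_{ex}$ with $k^{k^2-2}$ (invoking Remark~\ref{rem:k7} to assume $k\geq 7$ when needed makes the comparison comfortable, and small $k$ force the type of $S$ to be correspondingly small, making the type-specific constant replace the uniform $C_{ex}$). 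The principal obstacle I anticipate is establishing the uniform orbit-size bound $c_0$ for the field-automorphism action on $P_T$, which requires a type-by-type analysis, with special care for $\tw{3}\type{D}_4$ at $p=3$ (because of the anomalous structure of $P_T$) and for $\type{E}_6$ and $\tw{2}\type{E}_6$ at $p=3$ (because the hypotheses of Lemma~\ref{lem:ss} are delicate there; one can instead work inside $\wt{S}$ or $G/\zent{G}_p$).
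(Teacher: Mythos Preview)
Your cyclic case is essentially the paper's argument, and the overall strategy for the non-cyclic case---count semisimple $p$-elements in $P_T$ up to $\Aut(S)$-conjugacy---is the same idea that underlies the bound \eqref{eq:HSFbnd} from \cite{HSF21} that the paper invokes. The gap is in your treatment of field automorphisms.

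You write that ``a field automorphism $\sigma_q$ acts by $s\mapsto s^q$, which on $p$-torsion has order equal to the multiplicative order of $q$ modulo $p^b$, a divisor of the Coxeter number of $W$''. The map $x\mapsto x^q$ is the Frobenius $F$ defining $G=\bG^F$; it is the identity on $G$ and contributes nothing to $\Out(S)$. The field automorphisms of $S$ are generated by $\sigma_{q_0}\colon x\mapsto x^{q_0}$, where $q=q_0^f$, and it is the order of $q_0$ modulo $p^b$ that governs the orbit sizes on $P_T$. That order is \emph{not} bounded by the Coxeter number. For a concrete family, take $G=\type{E}_8(q)$ with $q=q_0^{30p^c}$ and a prime $p>30$ with $d_p(q_0)=30$ and $p\,\|\,(q_0^{30}-1)$. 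Then $e=d_p(q)=1$, and by lifting-the-exponent $p^{b}=p^{c+1}\,\|\,(q-1)$, so $P_T\cong C_{p^{c+1}}^{8}$. The order of $q_0$ modulo $p^{c+1}$ is $30p^{c}$, so $\sigma_{q_0}$ has orbits of size $30p^{c}$ on $P_T$, which is unbounded as $c\to\infty$. Thus no absolute constant $c_0$ exists, and your linear estimate $|P_T|\leq c_0|W|\,k_0$ (hence $|P|\leq C_{ex}\,k_0$) does not follow.

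This is exactly why the paper's bound from \cite{HSF21} carries the extra factor $p^b$ in the denominator: one only gets $k_0\geq p^{bm_e}/(gd\,p^b|W_e|)=p^{b(m_e-1)}/(gd|W_e|)$, and then the hypothesis $m_e\geq 2$ gives $p^{bm_e/2}\leq p^{b(m_e-1)}$, whence $\sqrt{|P|}\leq 6|W(\type{E}_8)|\,k_0$ and the \emph{quadratic} bound $|P|\leq C_{ex}\,k_0^2$. The non-abelian case is then reduced to the same inequality by checking from the order polynomials that $\sqrt{|P|}\leq p^{b(m_e-1)}$ (with the single correction for $(\type{G}_2,3)$). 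For the final ``in particular'' statement in the cyclic case, the paper uses Lemma~\ref{lem:BP21cyclic} (Dade's formula $k(B_0)=e+(|P|-1)/e$) rather than Lemma~\ref{lem:HSF}; your route via $p\leq k^2/2$ also works there, but the non-cyclic case needs the corrected quadratic bound before the numerical comparison with $k^{k^2}$ goes through.
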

It should be noted, however, that in the last statement, $P\in\Syl_p(S)$, rather than $\Syl_p(A)$.
\begin{proof}
Keep the notation above. Suppose first that a Sylow $p$-subgroup of $G$ is abelian. If $P$ is cyclic, then $P=\hat{P}=P_T=C_{p^b}$. Here we may argue similarly to Proposition \ref{prop:BP21except2} to obtain $b<k_0$, and hence $|P|<p^{k_0}$.  Lemma \ref{lem:BP21cyclic} further yields the last statement in this case. Hence, we may assume that $P$ is not cyclic, so that $m_e\geq 2$. By the discussion preceeding \cite[Theorem 5.4]{HSF21}, we have 
\begin{equation}\label{eq:HSFbnd}
k_0\geq \frac{p^{b m_e}}{gd p^b|W_e|},
\end{equation} where $g$ is the size of the subgroup of $\out(S)$ of graph automorphisms, $d:=[\wt{S}:S]$ is the size of the group of diagonal automorphisms, and $|W_e|$ is the so-called relative Weyl group for a Sylow $\Phi_e$-torus of $G$. Since $d\leq 3$, $g\leq 2$, and $|W_e|$ is bounded by the size of the largest Weyl group for the types under consideration, $|W(\type{E}_8)|$,  we have 
\[k_0\geq \frac{p^{bm_e}}{6|W(E_8)| p^b}.\]
  
Notice that $p^{b(m_e-1)}\geq p^{bm_e/2}$ for $m_e\geq 2$. Then we have $\sqrt{|P|}\leq \sqrt{|\hat P|}\leq 6|W(E_8)|k
_0$, and hence the statement holds.

We now assume that $\hat P$ is nonabelian. By considering only the semisimple characters of $G$ corresponding to elements of $G^\ast$ found in a copy of $P_T$, the exact same arguments as in \cite[Section 5]{HSF21} yield that 
the bound \eqref{eq:HSFbnd} still holds in this case.  
 
By considering the degree polynomials, we see that in each case, we have $\sqrt{|P|}\leq p^{b(m_e-1)}$ again, except possibly if $G=\type{G}_2(q)$, $p=3$, $m_e=2$, and $|P|=p^{2b+1}$. Then $\sqrt{|P|}=\sqrt{3}\cdot 3^b= \sqrt{3}\cdot p^{b(m_e-1)}\leq \sqrt{3}|W_e|k_0$, where the last inequality is because $d=1=g$ in this case.
In all cases, then, we see that the statement holds.
\end{proof}

\subsection{Classical Groups}\label{sec:classical}

We now turn to the case of classical groups. In this section, let $G=\bG^F$ be a group of Lie type defined over $\FF_q$, where $q$ is a power of a prime $q_0$ and $\bG$ is a simple, simply connected reductive group of type $\type{A}_{n-1}$ with $n\geq 2$, $\type{C}_n$ with $n\geq 2$, $\type{B}_n$ with $n\geq 3$, or type $\type{D}_n$ with $n\geq 4$ but $G\neq \tw{3}\type{D}_4(q)$, and such that $G$ is a nonexceptional Schur covering group for the simple group $S:=G/\zent{G}$.   That is, $G=\SL_n^\epsilon(q)$, $\Sp_{2n}(q)$,  $\operatorname{Spin}_{2n+1}(q)$, or $\operatorname{Spin}_{2n}^\pm(q)$, and $S=\PSL_n^\epsilon(q)$, $\PSp_{2n}(q)$, $\POmega_{2n+1}(q)$, or $\POmega_{2n}^\pm(q)$, respectively, for the corresponding values of $n$. Let $H$ be the related groups $H:=\GL_n^\epsilon(q), \Sp_{2n}(q), \SO_{2n+1}(q)$, respectively $\SO_{2n}^\pm(q)$.  We remark that, taking $\Omega:=\OO^{q_0'}(H)$, we have $\Omega$ is perfect and $S=\Omega/\zent{\Omega}=G/\zent{G}$. We also have $\zent{\Omega}\leq \zent{H}$ and further $H/\Omega$ and $\zent{H}$ are both $2$-groups if $H\neq \GL_n^\epsilon(q)$.
 Note that the dual group of $H$ is $H^\ast = \GL_n^\epsilon(q)$, $\SO_{2n+1}(q)$, $\Sp_{2n}(q)$, and $\SO_{2n}^\pm(q)$, respectively.   
 
 Let $p\neq q_0$ be a prime and write $\wt{P}$ for a Sylow $p$-subgroup of $H^\ast$. We remark that if  $X\in\{G, S\}$, then $|P|\leq |\wt{P}|$ for $P\in\Syl_p(X)$.
 
\subsubsection{Sylow $p$-Subgroups of Symmetric Groups}

Since the Sylow $p$-subgroups of classical groups are closely related to those of symmetric groups, we begin with a discussion of the latter.

Let $w$ be a positive integer with $p$-adic expansion \begin{equation}\label{eq:wpadic}
w=a_0+a_1p+a_2p^2+\cdots+a_tp^t,
\end{equation}  where $0\leq a_i<p$ for $0\leq i\leq t-1$ and $0<a_t<p$.  Let $Q\in\Syl_p(\Sym_w)$. We have $Q=\prod_{i=0}^{t}Q_i^{a_i}$, , where $Q_i$ is a Sylow $p$-subgroup of the symmetric group $\Sym_{p^i}$. Moreover $|Q_i|=p^{p^{i-1}+p^{i-2}+\cdots+p+1}\leq p^{p^i}$ for each $1\leq i\leq t$. Then with this, we see 
\begin{equation}\label{eq:sizeSylowSw}
 |Q|=(w!)_p\leq p^w.
\end{equation}
 \subsubsection{Unipotent Characters}
 
 Recall that $\mathcal{E}(G,1)$ is the set of unipotent characters of $G$. Since unipotent characters are trivial on $\zent{G}$, we may say that $\chi\in\Irr(S)$ is a unipotent character of $S$ if it is the deflation of some unipotent character of $G$.  
 The following observation will be useful in the cases of defining characteristic and when $p=2$.
\begin{lem}\label{lem:classicalunips}
Let $S$ be one of the groups $S=\PSL_n^\epsilon(q)$ with $n\geq 2$, $\PSp_{2n}(q)$ with $n\geq 2$, $\POmega_{2n+1}(q)$ with $n\geq 3$, $\POmega_{2n}^+(q)$ with $n\geq 4$, or $\POmega_{2n}^-(q)$ with $n\geq 4$. Then there are at least $n$ non-$\aut(S)$-conjugate unipotent characters of $S$.
\end{lem}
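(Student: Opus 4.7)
The plan is to handle each family separately, using Lusztig's parametrization of unipotent characters together with the known action of $\aut(S)$ on this set. The central input is that every unipotent character of $S$ is fixed by every inner-diagonal automorphism and by every field automorphism, so that only graph automorphisms (present in types $A_{n-1}$ with $n\ge 3$ and $D_n$ with $n\ge 4$, together with triality for $D_4$) can act nontrivially on the set of unipotent characters. Since the number of unipotent characters already grows quickly with $n$, in each case the task reduces to controlling the graph-automorphism action.

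For $S=\PSL_n^\epsilon(q)$ with $n\ge 2$, unipotent characters correspond bijectively to partitions $\lambda\vdash n$, yielding $p(n)$ characters in total; the elementary bound $p(n)\ge n$ for $n\ge 2$ gives enough room. When $\epsilon=-1$ there is no nontrivial graph automorphism, so these $p(n)$ unipotent characters are already pairwise non-$\aut(S)$-conjugate. For $\epsilon=+1$ with $n\ge 3$, I would verify that the graph automorphism fixes every unipotent character; a clean route is to observe that the unipotent characters $\chi_\lambda$ of $\GL_n(q)$ have pairwise distinct degrees as polynomials in $q$ (by the hook-length formula), so no automorphism can permute them, since automorphisms preserve character degrees.

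For types $B_n$ ($\POmega_{2n+1}(q)$, $n\ge 3$) and $C_n$ ($\PSp_{2n}(q)$, $n\ge 2$), the group $S$ has no graph automorphism at all, so each unipotent character sits alone in its $\aut(S)$-orbit. It then suffices to produce $n$ unipotent characters, which is immediate from Lusztig's symbol parametrization: for each $k\in\{0,1,\dots,n\}$ one can write down a distinct symbol of rank $n$ and odd defect, yielding more than $n$ characters. For type $D_n$ ($\POmega_{2n}^\pm(q)$, $n\ge 4$), I would again use Lusztig's symbols, observing that non-degenerate symbols (those whose two rows differ) label characters fixed by the graph automorphism, while each degenerate symbol labels a pair of characters swapped by it. Hence the $\aut(S)$-orbits on unipotent characters are in bijection with the symbols of the appropriate rank and defect, and it is easy to exhibit $n$ such symbols by varying one row.

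The main technical obstacle I anticipate is the case $S=\POmega_8^+(q)$, where triality enlarges the graph-automorphism group to $\mathfrak{S}_3$ and its action on unipotent characters is more intricate than in higher-rank type $D$. Here I would proceed by direct enumeration of the (small) list of unipotent characters and their generic degrees, and verify that at least $4$ $\aut(S)$-orbits remain.
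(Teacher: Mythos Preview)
Your overall strategy is correct and close to the paper's, but there is one genuine gap. You assert that for type $C_n$ (that is, $\PSp_{2n}(q)$ with $n\geq 2$) ``the group $S$ has no graph automorphism at all''. This is false when $n=2$ and $q$ is even: $\Sp_4(2^f)$ admits the exceptional graph automorphism arising from the symmetry of the $B_2=C_2$ Dynkin diagram in characteristic~$2$, and this automorphism \emph{does} act nontrivially on unipotent characters (it swaps two of the six). The conclusion survives, since one still gets five $\aut(S)$-orbits~$\geq n=2$, but your argument as written does not cover this case. The paper handles it by citing Malle's result \cite[Theorem~2.5]{Malle08}, which identifies precisely $\PSp_4(q)$ with $q$ even and $\POmega_{2n}^+(q)$ as the only classical cases where unipotent characters are not all $\aut(S)$-invariant, and then disposing of both exceptions by an explicit count.

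Two smaller remarks. First, your degree argument for $\PSL_n(q)$ (that distinct partitions yield distinct unipotent degrees, hence no automorphism can permute them) is plausible but needs justification that the numerical degrees, not just the generic degree polynomials, are pairwise distinct for the given $q$; the paper sidesteps this by invoking \cite[Theorem~2.5]{Malle08} directly. Second, for the counts in types $B$, $C$, $D$ the paper argues via principal-series unipotent characters indexed by $\Irr(W)$ (with $W$ having a symmetric-group quotient) rather than via Lusztig symbols as you do; both routes work, and yours is equally clean once the $\Sp_4(2^f)$ exception is acknowledged.
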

\begin{proof}
The unipotent characters of $G$ are described in \cite[Section 13.8]{carter}. From this, we have the number of unipotent characters in the case $\PSL_n^\epsilon(q)$ is the number of partitions $\pi(n)$ of $n$. In the remaining cases, the unipotent characters of  $G$ lying in the principal series are in bijection with the characters of the Weyl group  $W(\type{C}_n)$, $W(\type{B}_n)$, $W(\type{D}_n)$, or $W(\type{B}_{n-1})$, respectively, each of which contains a quotient group isomorphic to a symmetric group $\sym_n$ (resp. $\sym_{n-1}$ in the case of $W(B_{n-1})$). In each of these cases, there are also non-principal series unipotent characters. Then the number of unipotent characters is more than $\pi(n)$ (resp. $\pi(n-1)$) in these cases.
 Note that $\pi(n)\geq n$, with strict inequality for $n\geq 4$, and that further $\pi(n)\geq 2n$ for $n\geq 7$.

 With the exception of $\PSp_{4}(q)$ with $q$ even and $\POmega_{2n}^+(q)$, all unipotent characters of the groups under consideration are $\aut(S)$-invariant (see \cite[Theorem 2.5]{Malle08}), and we see that there are at least $n$ such characters in each case. For $\PSp_{4}(q)$ with $q$ even, there are six unipotent characters, with two of them interchanged by the exceptional graph automorphism. For $\POmega_{2n}^+(q)$ with $n\geq 5$, we see there are at least $2n$ unipotent characters (explicitly for $n=5,6$, and since $\pi(n)\geq 2n$ for $n\geq 7$), and the $\aut(S)$-orbits have size at most 2. The group $\POmega_8^+(q)$ has 14 unipotent characters and the $\aut(S)$-orbits have size at most 3. In all cases, then, we see that there are at least $n$ $\aut(S)$-orbits of unipotent characters.
\end{proof}

\subsubsection{Bounds in the Case of Classical Groups for Defining Characteristic or $p=2$}
\begin{cor}\label{cor:nunips}
Let $S$ be one of the groups as in Lemma \ref{lem:classicalunips}. Assume that $p\mid q$ or that  $p=2$ and $q$ is odd.  Then $k_{\aut(S)}(B_0(S))\geq n$.
\end{cor}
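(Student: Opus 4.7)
My plan is to use Lemma \ref{lem:classicalunips} to produce at least $n$ pairwise non-$\aut(S)$-conjugate unipotent characters of $S$, and then to verify that enough of them lie in $B_0(S)$. Since unipotent characters of $G=\bG^F$ are trivial on $\zent{G}$, they identify with unipotent characters of $S$ with the same $\aut(S)$-orbit structure, so it suffices to produce the required count of non-$\aut(S)$-conjugate unipotent characters of $G$ in $B_0(G)$.

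In the defining characteristic case $p\mid q$, the only block of $G$ of positive $p$-defect is $B_0(G)$: an irreducible character has defect zero exactly when its degree is divisible by $|G|_p=q^N$, where $N$ is the number of positive roots. Among unipotent characters of $G$, only the Steinberg character $\St$ has degree $q^N$, so every other unipotent character lies in $B_0(G)$. Inspecting the proof of Lemma \ref{lem:classicalunips}, for types $\type{C}_n$, $\type{B}_n$, $\type{D}_n$, and for $\type{A}_{n-1}$ with $n\geq 4$, there are strictly more than $n$ $\aut(S)$-orbits of unipotent characters, so removing the $\St$-orbit still leaves at least $n$. The small cases $\PSL_2(q)$ and $\PSL_3^\epsilon(q)$ are handled directly: since $B_0(S)$ contains every non-Steinberg irreducible of $S$ in this situation, a short inspection of the generic character tables produces well more than $2$, respectively $3$, $\aut(S)$-orbits.

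In the case $p=2$ with $q$ odd, the Steinberg character has $2'$-degree $q^N$ and hence lies in a block of maximal defect, namely $B_0(G)$. It remains to show that the $n$ unipotent $\aut(S)$-orbits produced by Lemma \ref{lem:classicalunips} all sit in $B_0(G)$. For this I would appeal to the known distribution of unipotent characters into $2$-blocks: in types $\type{B}$, $\type{C}$, $\type{D}$ the prime $p=2$ is bad and, by Broué--Michel/Cabanes--Enguehard theory, the unipotent characters collapse into very few unipotent $2$-blocks, with enough landing in $B_0(G)$; in type $\type{A}$ the unipotent characters of $\GL_n^\epsilon(q)$ in $B_0$ are labelled by partitions of $n$ with empty $e$-core, where $e=d_2(q)\in\{1,2\}$, which accounts for all unipotents when $e=1$ and, by a short count of partitions of $n$ with empty $2$-core, for at least $n$ $\aut(S)$-orbits when $e=2$.

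The main obstacle is the $p=2$ case, where the block distribution of unipotent characters must be verified type-by-type, with the bad-prime subtleties in types $\type{B}$, $\type{C}$, $\type{D}$ and the $2$-core partition count for type $\type{A}$ with $e=2$ each requiring explicit, though routine, justification; once these are in hand, the corollary follows immediately from Lemma \ref{lem:classicalunips} and deflation to $S$.
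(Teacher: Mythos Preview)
Your defining-characteristic argument is correct in spirit but more laborious than needed. The paper avoids both the ``strictly more than $n$'' check and the small-rank case analysis with a one-line trick: since $\Irr(B_0(S))=\Irr(S)\setminus\{\St_S\}$, any non-unipotent irreducible character of $S$ lies in $B_0(S)$ and is automatically not $\aut(S)$-conjugate to any unipotent character. So even if $\St_S$ is among the $n$ unipotent orbits supplied by Lemma~\ref{lem:classicalunips}, a single non-unipotent character restores the count to $n$.

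For $p=2$ your argument has a genuine gap in type~$\type{A}$. You take $e=d_2(q)\in\{1,2\}$ and try to count partitions of $n$ with empty $2$-core when $e=2$. But the paper's quantity $d_2(q)$ is the order of $q$ modulo~$4$, introduced for Sylow-theoretic purposes; the Fong--Srinivasan parametrisation of unipotent $\ell$-blocks of $\GL_n^\epsilon(q)$ uses the order of $q$ (resp.\ $\epsilon q$) modulo~$\ell$, which for $\ell=2$ is always~$1$. Your $e=2$ count would give \emph{zero} unipotent characters in $B_0$ whenever $n$ is odd (no partition of an odd integer has empty $2$-core), contradicting $1_G\in B_0$. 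The correct conclusion is that for $p=2$ \emph{all} unipotent characters of $\GL_n^\epsilon(q)$ lie in $B_0$, so no counting is needed. More to the point, the paper bypasses all of this---including your vaguer appeal for types $\type{B},\type{C},\type{D}$---by citing \cite[Theorem~21.14]{CE04}, which says directly that for $p=2$ the principal block is the unique unipotent $2$-block of each classical group, so the $n$ orbits from Lemma~\ref{lem:classicalunips} all lie in $B_0(S)$.
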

\begin{proof}
In defining characteristic, we have $\irr{B_0(S)}=\irr{S}\setminus\{\mathrm{St}_S\}$, where $\mathrm{St}_S$ is the Steinberg character (see \cite[Theorem 6.18]{CE04}). In the case $p=2$ and $q$ is odd, we have $B_0(S)$ is the unique block containing unipotent characters, by \cite[Theorem 21.14]{CE04}. Then the statement follows from Lemma \ref{lem:classicalunips} and the fact that $\irr{S}$  contains   non-unipotent characters.
\end{proof}

\begin{lem}\label{lem:boundclassicalb2}
Let $S$ be as in Lemma \ref{lem:classicalunips} with $q$ odd, and let $2^{b+1}$ be the largest power of $2$ dividing $q^2-1$. Let $B_0(S)$ be the principal $2$-block of $S$. Then  $b+1\leq k_{\Aut(S)}(B_0(S))$.
\end{lem}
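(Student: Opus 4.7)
The plan is to exhibit $b+1$ pairwise non-$\Aut(S)$-conjugate characters of $S$ in $B_0(S)$, adapting the strategy of Proposition \ref{prop:BP21except2}. One of these is the trivial character of $S$; the remaining $b$ will arise as (deflations of) semisimple characters attached to semisimple $2$-elements of pairwise distinct orders $2,4,\ldots,2^b$ in the dual group.

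First I would locate the semisimple elements. Since $q$ is odd and $(q^2-1)_2=2^{b+1}$, exactly one of $q-1$ or $q+1$ has $2$-adic valuation $b$. Hence the dual classical group $H^*\in\{\GL_n^\epsilon(q),\Sp_{2n}(q),\SO_{2n+1}(q),\SO_{2n}^\pm(q)\}$ contains a cyclic subgroup of order $2^b$ inside a maximal torus of type $\Phi_{d_2(q)}$, and for each $1\le i\le b$ I would pick a semisimple $2$-element $s_i\in H^*$ of exact order $2^i$.

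Next I would apply Lemma \ref{lem:ss} to obtain semisimple characters $\chi_{s_i}\in\Irr(H)$ lying in the principal $2$-block of $H$ and trivial on $\zent{H}_{2'}$. The hypotheses of Lemma \ref{lem:ss} are arranged by working in the realization of the ambient group either with connected center (e.g.\ $H=\GL_n^\epsilon(q)$ in type $\type A$) or such that the dual $H^*$ is simply connected (in types $\type B,\type C,\type D$), so that all semisimple centralizers in $H^*$ are connected. Since the orders $|s_i|$ are pairwise distinct, \cite[Corollary~2.5]{NTT08} then guarantees that the $\chi_{s_i}$ lie in $b$ distinct $\Aut$-orbits, which together with the trivial character accounts for $b+1$ orbits.

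Finally, to descend from characters of $H$ to characters of $S$, I would take an irreducible constituent of $\chi_{s_i}|_{\Omega}$ (with $\Omega=\OO^{q_0'}(H)$) lying in $B_0(\Omega)$ by block covering, and then deflate through $\Omega\twoheadrightarrow S=\Omega/\zent{\Omega}$. The main obstacle lies in this last step: for types $\type B,\type C,\type D$ the center $\zent{\Omega}$ is a nontrivial $2$-group, so triviality of $\chi_{s_i}$ on the $2'$-part of $\zent{H}$ given by Lemma \ref{lem:ss} does not by itself ensure that $\chi_{s_i}$ deflates to a character of $S$. This requires choosing the $s_i$ inside the subgroup of $H^*$ dual to the trivial characters of $\zent{\Omega}$ (equivalently, in the kernel of a suitable central isogeny on the dual side), together with a case-by-case central-character analysis in each classical type. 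Once this is in place, the compatibility of semisimple characters with restrictions and with the action of $\Aut(H)$ on $\Omega$ preserves the $\Aut(S)$-orbit separation, yielding the desired $b+1$ orbits in $B_0(S)$.
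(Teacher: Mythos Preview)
Your overall strategy (semisimple characters attached to $2$-elements of distinct orders, plus the trivial character) matches the paper's, but there are two genuine gaps.

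First, your appeal to Lemma~\ref{lem:ss} is not justified in types $\type B,\type C,\type D$. That lemma requires either connected centre or that $p$ be \emph{good} together with connected centraliser; but $p=2$ is bad for all of $\type B,\type C,\type D$, so ``$H^\ast$ simply connected, hence centralisers connected'' does not meet the hypothesis. The paper bypasses Lemma~\ref{lem:ss} entirely here: working with the simply connected cover $G=\bG^F$, it uses \cite[Theorem~21.14]{CE04} to conclude that $B_0(G)$ is the \emph{unique} unipotent $2$-block, so every $\mathcal{E}(G,s)$ with $s$ a $2$-element already lies in $B_0(G)$.

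Second, the descent to $S$---which you correctly flag as the main obstacle---is exactly where the argument must be made precise, and ``case-by-case central-character analysis'' is not a proof. The paper's device is clean: take $G$ simply connected and choose the $s_i$ inside $\bO^{q_0'}(G^\ast)=[G^\ast,G^\ast]$; then the dual of \cite[Proposition~11.4.12 and Remark~11.4.14]{DM20} forces $\chi_{s_i}$ to be trivial on $\zent G$, so it deflates to $S$ with no further work. The price is that one can only guarantee elements of order up to $2^{b-1}$ in $\bO^{q_0'}(G^\ast)$, giving $b-1$ (not $b$) pairwise non-$\Aut(S)$-conjugate semisimple characters. The paper then recovers the missing count not with the trivial character alone but by invoking Lemma~\ref{lem:classicalunips}: there are at least $n\ge 2$ non-$\Aut(S)$-conjugate unipotent characters in $B_0(S)$, and these are not $\Aut(S)$-conjugate to any nontrivial semisimple character, so $k_{\Aut(S)}(B_0(S))\ge (b-1)+n\ge b+1$. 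Your proposed route would need to show that the constraint ``$s_i$ in the kernel of the relevant isogeny'' still allows orders up to $2^{b}$, which is precisely the point that fails in general and is why the paper settles for $b-1$ and supplements with unipotents.
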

\begin{proof}
As before, let $S=G/\zent{G}$ with $G=\bG^F$ of simply connected type. Recall that $B_0(G)$ is the unique unipotent block of $G$ by \cite[Theorem 21.14]{CE04}, and hence $B_0(G)$ is exactly the union of  rational Lusztig series $\mathcal{E}(G,s)$ with $s\in G^\ast$ having order a power of $2$. From the structure of the Sylow $2$-subgroup of $G^\ast$ described in \cite[Theorem 4.10.2]{GLS} (see also \cite{carterfong}), we can see that the group $\bO^{q_0'}(G^\ast)$ contains an element of order $2^{b-1}$. For $1\leq i\leq b-1$, let $s_i\in \bO^{q_0'}(G^\ast)$ be of order $2^{i}$.  Then the semisimple characters $\chi_{s_i}$ for $1\leq i\leq b-1$ lie in $B_0(G)$ and are trivial on $\zent{G}$ by  the dual version of \cite[Proposition  11.4.12 and Remark 11.4.14]{DM20}. Further, these lie in distinct $\aut(S)$-conjugacy classes, using \cite[Corollary 2.5]{NTT08}. Combining with Lemma  \ref{lem:classicalunips}, we see $k_{\Aut(S)}(B_0(S))\geq b-1+n\geq b+1$.
\end{proof}

 \subsubsection{Bounds in the Case of Classical Groups for Nondefining Characteristic with $p$ Odd}\label{sec:classicalBP21}

Now let $p$ be an odd prime not dividing $q$ and let $d:=d_p(q)$. If $G=\SL_n^\epsilon(q)$, let $e:=d_p(\epsilon q)$, and otherwise let $e:=d_p(q^2)$. Further, let $b\geq 1$ be the largest integer such that $p^b$ divides $(\epsilon q)^e-1$, respectively $q^{2e}-1$.

We begin by discussing the Sylow $p$-subgroups $\wt{P}$ of $H^\ast$, which have been described by Weir \cite{weir}.

First, consider the case $H=\GL_n^\epsilon(q)$.  Let $n=ew+r$, where $r,w$ are positive integers with  $0\leq r<e$ and $w$ is written with $p$-adic expansion as in \eqref{eq:wpadic}.  
A Sylow $p$-subgroup of $H=\GL_n^\epsilon(q)$ is then of the form $\wt{P}=\prod_{i=0}^t P_i^{a_i}$, where $P_i\in \Syl_p(\GL_{ep^i}^\epsilon(q))$  is of the form $C_{p^b}\wr Q_i$. Hence $\wt{P}$ contains a subgroup of the form $\bar P=C_{p^b}^w$. Further, $\wt{P}\cap \SL_n^\epsilon(q)$ is a Sylow $p$-subgroup of $G=\SL_n^\epsilon(q)$.

Now consider $H=\Sp_{2n}(q)$, $\SO_{2n+1}(q)$, and $\SO_{2n}^\epsilon(q)$. The structure of $\wt{P}$ in these case builds off of the case of 
 linear groups above.  If $H^\ast\in\{\SO_{2n+1}(q), \Sp_{2n}(q)\}$, we have $\wt P$ is already a Sylow $p$-subgroup of $\GL_{2n+1}(q)$ (and hence of $\GL_{2n}(q)$) when $d$ is even, and are Sylow subgroups of the naturally-embedded $\GL_n(q)$ if $d$ is odd.  
 In particular, writing $n=ew+r$ with $r, w$ as before, we have $\wt{P}$ is again  
 of the form $\wt{P}\cong P_0^{a_0}\times\cdots\times P_t^{a_t}$, where each $P_i$ is a Sylow $p$-subgroup of $\GL_{dp^{i}}(q)$ (and hence again $P_i\cong C_{p^b}\wr Q_i$).  
 
 If $H^\ast=\SO_{2n}^\pm(q)$, then we have embeddings $\SO_{2n-1}(q)\leq H^\ast\leq \SO_{2n+1}(q)$, and $\wt{P}$ is a Sylow subgroup of either $\SO_{2n-1}(q)$ or $\SO_{2n+1}(q)$.  In this case, letting $m\in\{n, n-1\}$ so that $\wt{P}$ is a Sylow subgroup of $\SO_{2m+1}(q)$ and now writing $m=ew+r$ with $w$ again written as in \eqref{eq:wpadic}, $\wt{P}$ can again be written $\wt{P}\cong P_0^{a_0}\times\cdots\times P_t^{a_t}$ with each $P_i$ a Sylow subgroup of $\GL_{dp^{i}}(q)$.

In all cases, we remark that $p^t\leq w\leq p^{t+1}$ and that $t=0$ corresponds to the case that a Sylow $p$-subgroup of $H$ is abelian. Further, $\wt{P}$ contains a subgroup of the form $\bar P\cong C_{p^b}^w$.

\begin{lem}\label{lem:boundclassicalb}
With the notation above, we have $b+1\leq k_{\Aut(S)}(B_0(S))$.
\end{lem}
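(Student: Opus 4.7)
The plan is to produce $b+1$ distinct $\Aut(S)$-orbits in $\Irr(B_0(S))$, in direct analogy with the proof of Lemma \ref{lem:boundclassicalb2} but with the unipotent characters replaced by additional semisimple characters (since for odd $p$ the principal block is no longer the unique unipotent block). Namely, I would exhibit the trivial character together with the (deflations to $S$ of) $b$ semisimple characters $\chi_{s_i}\in\Irr(G)$ indexed by semisimple elements $s_i\in G^\ast$ of orders $p, p^2, \dots, p^b$.

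First I would use the explicit structure of $\wt{P}\in\Syl_p(H^\ast)$ from the discussion preceding the lemma: in every case $\wt{P}$ contains a subgroup $\bar{P}\cong C_{p^b}^w$ with $w\geq 1$ (since $p\mid |S|$). Thus $H^\ast$ contains elements of each $p$-power order $p^i$ with $1\leq i\leq b$. Since $p$ is odd, standard compatibility between Sylow $p$-subgroups of $H^\ast$, $[H^\ast,H^\ast]$ and $G^\ast$ (the isogeny kernels being $2$-groups in the non-linear cases, and absorbed by the $\GL^\epsilon_n\twoheadrightarrow\PGL^\epsilon_n$ map in the linear case, where the relevant $p$-part is accounted for by $b$) allows me to choose $s_i\in \bO^{q_0'}(G^\ast)$ of exact order $p^i$ for each $1\leq i\leq b$.

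Next, since $p$ is odd, it is good for $\bG$ of type $\type{A},\type{B},\type{C},\type{D}$, and $s_i$ is a semisimple $p$-element in $\bG^\ast$, so by Lemma \ref{lem:ss} the semisimple character $\chi_{s_i}\in\Irr(G)$ lies in $B_0(G)$. Exactly as in the proof of Lemma \ref{lem:boundclassicalb2}, the dual version of \cite[Proposition 11.4.12 and Remark 11.4.14]{DM20} gives that $\chi_{s_i}$ is trivial on $\zent{G}$, so it deflates to a non-trivial character $\bar{\chi}_{s_i}\in\Irr(B_0(S))$. Because automorphisms of $G^\ast$ preserve the order of a semisimple element, \cite[Corollary 2.5]{NTT08} shows that $\bar{\chi}_{s_i}$ and $\bar{\chi}_{s_j}$ are $\Aut(S)$-conjugate only if the orders of $s_i$ and $s_j$ agree, so the $\bar{\chi}_{s_i}$ represent $b$ pairwise distinct non-trivial $\Aut(S)$-orbits. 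Adjoining the trivial character yields $k_{\Aut(S)}(B_0(S))\geq b+1$.

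The main technical hurdle will be Step~1, namely verifying case-by-case across the four families $G=\SL_n^\epsilon(q), \Sp_{2n}(q), \operatorname{Spin}_{2n+1}(q), \operatorname{Spin}_{2n}^\pm(q)$ that the $p^b$-cycle inside the $C_{p^b}^w$ part of $\wt{P}$ really can be realised by an element of $\bO^{q_0'}(G^\ast)$ (rather than only of $H^\ast$) and that the conclusions of Lemma \ref{lem:ss} and of the dual \cite[Prop.\ 11.4.12]{DM20} both apply uniformly. For odd $p$ this reduces to the observation that the obstructions $\zent{H}/\zent{\Omega}$ and $H/\Omega$ are $2$-groups in types $\type{B},\type{C},\type{D}$, while in type $\type{A}$ the $p$-part of $\zent{G}$ is cyclic and is already reflected in the exponent $b$, so no $p$-order is lost in passing between the groups.
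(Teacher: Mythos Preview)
Your overall strategy—producing the trivial character together with $b$ semisimple characters indexed by $p$-elements of orders $p,p^2,\ldots,p^b$—is exactly the paper's. For the symplectic and orthogonal types your argument is essentially the paper's verbatim: there too one picks $s_j\in G^\ast$ of order $p^j$, uses that $|\zent{G}|$ is a $2$-power (so coprime to $|s_j|$) together with \cite[Exercise 20.16]{MT11} to get connectedness of $\cent{\bG^\ast}{s_j}$, and then applies Lemma~\ref{lem:ss}.

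The divergence is in type $\type{A}$. You propose to work directly in $G=\SL_n^\epsilon(q)$ with $s_i\in G^\ast=\PGL_n^\epsilon(q)$, but your appeal to Lemma~\ref{lem:ss} is not justified there: $\zent{\SL_n}$ is not connected, so the lemma requires $\cent{\PGL_n}{s_i}$ to be connected, and this can fail for $p$-elements when $p\mid n$ (e.g.\ the image of $\mathrm{diag}(1,\zeta_p,\ldots,\zeta_p^{p-1},1,\ldots,1)$). Your final paragraph flags type-$\type{A}$ complications but only addresses the passage of $p$-orders between isogenous groups, not this connectedness hypothesis. The paper sidesteps the issue by working instead in $\wt{G}=\GL_n^\epsilon(q)$, whose center is connected, so Lemma~\ref{lem:ss} applies with no centralizer check. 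This forces a case split: for $e>1$ one takes any $s_j\in\wt{G}^\ast$ of order $p^j$ (then $p\nmid(q-\epsilon)$ gives triviality on all of $\zent{\wt{G}}$ automatically), while for $e=1$ one must choose the specific determinant-one shape $s_j=\mathrm{diag}(\lambda_j,\lambda_j^{-1},1,\ldots,1)$ so that \cite[Proposition 11.4.12]{DM20} forces triviality on the full center. The paper then restricts each $\chi_{s_j}$ from $\wt{G}$ down to $S$, and correspondingly must rule out $\wt{G}$-conjugacy of $s_i$ with $s_j^\alpha z$ for $z\in\zent{\wt{G}}$ (accounting for diagonal automorphisms), rather than just $\aut$-conjugacy of $s_i$ with $s_j$. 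Your direct route in $G$ would trade that last wrinkle for the connectedness obstruction; either can be made to work, but as written your type-$\type{A}$ step has a gap at the invocation of Lemma~\ref{lem:ss}.
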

 \begin{proof}
 
We will show that there are at least $b$ characters in $\irr{B_0(S)}\setminus\{1_S\}$ lying in distinct $\aut(S)$-orbits. 

First, let  $G=\SL_n^\epsilon(q)$ and let $\wt{G}:=H=\GL_n^\epsilon(q)$, and note $\wt{G}^\ast\cong \wt{G}$. We have $\aut(S)=\wt{S}\rtimes \mathcal{D}$, where $\mathcal{D}$ is an appropriate group of graph and field automorphisms and $\wt{S}:=\wt{G}/\zent{\wt{G}}$. Recall that a Sylow $p$-subgroup $\wt{P}$ of $\wt{G}$ contains a subgroup of the form $C_{p^b}^w$.

Assume for the moment that $e>1$, so that $p\nmid|\zent{\wt{G}}|$ and $p\nmid [\wt{G}:G]$.  Hence, for $1\leq j\leq b$, we may let $s_j\in \wt{G}^\ast\cong \wt{G}$ be an element of order $p^j$. The corresponding semisimple character $\chi_{s_j}$ of $\wt{G}$ is trivial on $\zent{\wt{G}}$ and lies in $B_0(\wt{G})$, using Lemma \ref{lem:ss}. 
Hence, each $\chi_{s_j}$ can be viewed as a character of $B_0(\wt{S})$. Further, note that since $p\nmid |\zent{\wt{G}}|$,  
$s_i$ and $s_j^\alpha z$ cannot be $\wt{G}$-conjugate for any $i\neq j$ and any $\alpha\in \mathcal{D}$ and $z\in\zent{\wt{G}}$.  

If instead $e=1$, we have $n=w\geq 2$. For $1\leq j\leq b$, let $\lambda_j\in C_{p^b}\leq \FF_{q^2}^\times$ with $|\lambda_j|=p^j$, and let $s_j$ be an element of $C_{p^b}^n\leq \wt{P}$ of the form $\mathrm{diag}(\lambda_j, \lambda_j^{-1}, 1, \ldots 1)$, where $1$ appears as an eigenvalue with multiplicity $n-2$.  Then again $\chi_{s_j}\in\irr{B_0(\wt{G})}$ by Lemma \ref{lem:ss} and is trivial on $\zent{\wt{G}}$ by the dual version of \cite[Proposition  11.4.12, and Remark 11.4.14]{DM20}, since $s_j\in [\wt{G},\wt{G}]=G$.  Further, we again see that $s_j$ is not $\wt{G}$-conjugate to $s_i^\alpha z$ for any $i\neq j$,  $\alpha\in \mathcal{D}$, and $z\in \zent{\wt{G}}$, by considering the eigenvalues. 

In either case, we let $\chi_i$ for $1\leq i\leq b$ be a constituent of $\chi_{s_i}$ restricted to $S$. Then  $\chi_i$ cannot be $\aut(S)$-conjugate to $\chi_j$ for $i\neq j$, using \cite[Corollary 2.5]{NTT08} along with \cite[ Proposition  11.4.12 and Remark 11.4.14]{DM20}. Hence, we see at least $b$ distinct $\aut(S)$-orbits represented in $\irr{B_0(S)}\setminus\{1_S\}$.

Now let $G$ be one of the remaining groups as in the beginning of the section. Then $|\zent{G}|$ is a power of $2$. In each case, a Sylow $p$-subgroup of $G$ (or, equivalently, of $S$) and of $G^\ast$ contains a subgroup of the form $C_{p^b}$. Here, we may again, for each $1\leq j\leq b$, let $s_j\in G^\ast$ be a semisimple element of order $p^j$. 
Then since each $(|s_j|, |\zent{G}|)=1$, we have by \cite[Exercise 20.16]{MT11} that $\cent{\bG^\ast}{s_j}$ is connected since $p\geq 3$ is good for $\bG$, and hence the corresponding semisimple character $\chi_{s_j}$ of $G$ lies in $B_0(G)$ and is trivial on $\zent{G}$ by Lemma \ref{lem:ss}. That is, we may again view $\chi_{s_j}$ as a character in $\irr{B_0(S)}\setminus\{1_G\}$. Since $s_i$  cannot be $\aut(G^\ast)$-conjugate to $s_j$ for any $i\neq j$, we see $\chi_{s_i}$ and $\chi_{s_j}$ cannot be $\aut(S)$-conjugate as before and we again have $k_{\aut(S)}(B_0(S))\geq b+1$.
 \end{proof}

 \begin{lem}\label{lem:boundclassicalt}
 With the above notation, we have at least $w$ unipotent characters in $B_0(S)$ that are not $\aut(S)$-conjugate, and hence $w\leq k_{\Aut(S)}(B_0(S))$. If $t\geq 1$, this yields $p\leq p^t\leq k_{\Aut(S)}(B_0(S))$. 
 \end{lem}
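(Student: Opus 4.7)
My plan is to exhibit, in each case, at least $w$ unipotent characters of $G$ that lie in $B_0(G)$, are trivial on $\zent{G}$, and whose restrictions to $S$ represent pairwise distinct $\aut(S)$-orbits. Once this is done, the second assertion of the lemma follows immediately from the $p$-adic expansion \eqref{eq:wpadic}, since $w\geq p^t\geq p$ whenever $t\geq 1$.

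The first ingredient is the Fong--Srinivasan classification of unipotent $p$-blocks of classical groups, which parametrizes the unipotent characters in $B_0(G)$ by combinatorial objects of a prescribed $e$-core: partitions of $n$ in type $\type{A}_{n-1}$, and $e$-symbols in types $\type{B}_n$, $\type{C}_n$, $\type{D}_n$. In each case the characters so parametrized are indexed by $e$-quotients of total size $w$, matching the decomposition $n=ew+r$ (respectively $m=ew+r$), so the naive count of unipotent characters in $B_0(G)$ is already a partition-type function of $w$.

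For the second ingredient I would produce an explicit family of $w$ such objects. In type $A$, take the partitions $\lambda_k := (n-ek,\,e^k)$ for $0\leq k\leq w-1$: these are $w$ pairwise distinct partitions of $n$, and each has $e$-core equal to the $e$-core of $(n)$, as one verifies by successively removing the $e$-hook on the bottom row of $\lambda_k$. For types $B$, $C$, $D$ I would carry out the analogous construction on $e$-symbols, again producing $w$ pairwise distinct unipotent characters in $B_0(G)$. Since unipotent characters are trivial on $\zent{G}$, these descend to distinct characters of $S$ in $B_0(S)$.

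It remains to bound $\aut(S)$-orbit sizes. By \cite[Theorem 2.5]{Malle08} (cf.\ the proof of Lemma \ref{lem:classicalunips}), unipotent characters are $\aut(S)$-invariant apart from the small exceptional families $\PSp_4(q)$ with $q$ even, $\POmega_{2n}^+(q)$ with $n\geq 5$, and $\POmega_8^+(q)$, where $\aut(S)$-orbits have size at most $2$ (respectively $3$ under triality). In those exceptional cases I would enlarge the family above --- exactly as in Lemma \ref{lem:classicalunips}, using that the number of unipotent characters in $B_0(G)$ given by the $e$-symbol count grows fast enough --- so that, after accounting for the nontrivial orbit sizes, at least $w$ distinct $\aut(S)$-orbits still remain. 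The principal technical obstacle is precisely this bookkeeping for the $e$-symbol combinatorics of types $B$, $C$, $D$ and the triality action on $\POmega_8^+(q)$; however, the rank for $\POmega_8^+$ is small enough that direct inspection handles the finitely many residual cases.
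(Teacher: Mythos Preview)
Your strategy---count unipotent characters in the principal block and then control $\aut(S)$-orbit sizes via \cite[Theorem~2.5]{Malle08}---is the same as the paper's, and your explicit family $\lambda_k=(n-ek,e^k)$ for type $\type{A}$ is a pleasant concrete witness that these partitions all share the $e$-core $(r)$. The paper, however, bypasses any explicit construction: it simply quotes that the number of unipotent characters in $B_0(H)$ equals $k(e,w)$ in type $\type{A}$ (by \cite{MO83}) and $k(2e,w)$ in types $\type{B}$, $\type{C}$ (by \cite[Section~5.2]{malle17}), together with the elementary inequalities $k(e,w)\geq w$ and $k(2e,w)>2w$ from \cite{olsson84}. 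For $\SO_{2n}^\pm(q)$ it uses the analogous count from \cite[Section~5.3 and Lemma~5.6]{malle17}, which is again at least $2w$. The factor of $2$ in $k(2e,w)>2w$ is what makes the orbit-size-$2$ exceptions in types $\type{B}$, $\type{C}$, $\type{D}$ come for free, so no ``enlarging the family'' or separate bookkeeping is needed there. For $\SO_8^+(q)$ the paper does not do direct inspection but invokes \cite[Lemma~3.10]{RSV21}.

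The main gap in your write-up is precisely the part you flag as ``analogous'': you never actually produce the $e$-symbol families in types $\type{B}$, $\type{C}$, $\type{D}$, nor verify that they survive the graph automorphism in type $\type{D}$. That can certainly be done, but it is more work than the one-line citation the paper uses, and for $\type{D}_4$ your phrase ``finitely many residual cases'' is slightly misleading (there are infinitely many $q$; what is finite is the set of possible values of $e$ and the list of fourteen unipotent characters, on which triality acts in a $q$-independent way). In short: your argument is correct in outline and would go through with honest symbol combinatorics, but the paper's route via the known closed-form counts $k(e,w)$ and $k(2e,w)$ is shorter and handles the exceptional orbit sizes automatically.
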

 \begin{proof}
We remark first that the unipotent characters of $H$ are irreducible on restriction to $\Omega$ and are trivial on $\zent{H}$.  (See, e.g. \cite[Proposition 2.3.15]{GM20}.)

In the case $H=\GL_n^\epsilon(q)$, we have the number of unipotent characters in $B_0(H)$ is $k(e,w)$, by \cite[Proposition 
(2.3)]{MO83}, where $k(e,w)$ can be computed as in \cite[Lemma 1]{olsson84}. This yields at least $k(e,w) \geq w$ unipotent characters in $B_0(S)$,  which are all $\aut(S)$-invariant (see \cite[Theorem 2.5]{Malle08}). 

If $H=\Sp_{2n}(q)$ or $\SO_{2n+1}(q)$, we see from \cite[Section 5.2]{malle17} that the number of unipotent characters in $B_0(H)$ is $k(2e, w)>2w$, which again are $\aut(S)$-invariant by \cite[Theorem 2.5]{Malle08} unless $H=\Sp_{4}(q)$ with $q$ even. In the latter case, the unipotent characters are at worst permuted in pairs by $\aut(S)$, and hence again there are at least $w$ non-$\aut(S)$-conjugate such characters.

If $H=\SO_{2n}^\pm(q)$, then $B_0(H)$ contains either at least $k(2e,w)$ unipotent characters or at least $(k(2e,w)+3k(e,w/2))/2$ when $w$ is even, using \cite[Section 5.3 and Lemma 5.6]{malle17}. One can see that these numbers are again at least $2w$, and by \cite[Theorem 2.5]{Malle08}, again the unipotent characters are at worst permuted in pairs by $\aut(S)$ unless $H=\SO_8^+(q)$. In the latter case, 
\cite[Lemma 3.10]{RSV21} gives the claim.
 \end{proof}

\subsection{The Proof of Theorem \ref{thm:BP21almostsimple}}

The following will be useful in the proof of Theorem  \ref{thm:BP21almostsimple}, as well as in the proof of Theorem \ref{thm:BP21principal} below.
Here for a group $G$, we write $k_p(G)$ to denote the number of conjugacy classes of $p$-elements of $G$.

\begin{lem}
\label{kp}
Let $G$ be a finite group. Then $k_p(G)\leq k(B_0(G))$.  In particular, the number of chief factors of $G$ of order divisible by $p$ is at most $k(B_0(G))$. 
\end{lem}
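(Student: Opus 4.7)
The plan is to prove the first assertion $k_p(G)\le k(B_0(G))$ directly, and then deduce the chief-factor bound from it by a short combinatorial argument.

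For the first assertion, the natural tool is Brauer's second main theorem applied to the principal block, which gives the decomposition
\[
k(B_0(G)) \;=\; \sum_{(x,b)} l(b),
\]
where the sum runs over $G$-conjugacy classes of pairs $(x,b)$ with $x$ a $p$-element of $G$ and $b$ a block of $\cent{G}{x}$ whose induced block $b^{G}$ equals $B_0(G)$. The additional ingredient I would invoke is the standard fact that Brauer correspondence sends principal blocks to principal blocks: for every $p$-element $x\in G$, one has $B_0(\cent{G}{x})^G=B_0(G)$. Hence every $G$-conjugacy class of $p$-elements contributes at least $l(B_0(\cent{G}{x}))\ge 1$ to the right-hand side, and the inequality $k(B_0(G))\ge k_p(G)$ drops out.

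For the second assertion I would fix a chief series $1=N_0\triangleleft N_1\triangleleft\cdots\triangleleft N_r=G$, and for each index $i$ with $p\mid|N_i/N_{i-1}|$ produce a non-trivial $p$-element $x_i\in N_i\setminus N_{i-1}$. Such an $x_i$ exists because the hypothesis forces a Sylow $p$-subgroup of $N_i$ to be strictly larger than any Sylow $p$-subgroup of $N_{i-1}$. The elements $\{x_i\}$, together with $1$, then represent pairwise distinct $G$-conjugacy classes: by normality of $N_i$ in $G$, any conjugate $x_i^g$ still lies in $N_i\subseteq N_{j-1}$ whenever $i<j$, which rules out $x_i^g=x_j$. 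This produces strictly more than $r_p$ classes of $p$-elements, where $r_p$ is the number of chief factors of order divisible by $p$, so $r_p<k_p(G)\le k(B_0(G))$.

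The only non-elementary input is Brauer's second main theorem together with the Brauer-correspondence fact for principal blocks; both are entirely standard, so there is no real obstacle beyond correctly quoting them. The chief-factor step is purely a matter of exhibiting representatives and using normality to separate their conjugacy classes.
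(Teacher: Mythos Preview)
Your argument is correct and follows essentially the same route as the paper: both invoke Brauer's formula $k(B)=\sum_{(x,b)} l(b)$ (this is Theorem~5.12 in Navarro's book, a \emph{consequence} of the second main theorem rather than the theorem itself) together with Brauer's third main theorem to guarantee that each conjugacy class of $p$-elements contributes at least one to the sum. The paper does not spell out the ``in particular'' clause on chief factors, so your explicit construction of non-conjugate $p$-elements $x_i\in N_i\setminus N_{i-1}$ is a welcome addition rather than a point of divergence.
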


\begin{proof}
Let $\{x_1,\dots,x_t\}$ be a set of representatives of the non-central conjugacy classes of $p$-elements of $G$. 
By \cite[Theorem 4.14]{nbook}, $B_0(\bC_G(x_i))^G$ is defined for every $i=1,\dots,t$. 
By  \cite[Theorem 5.12]{nbook}) and Brauer's third main theorem (\cite[Theorem 6.7]{nbook}), we have that 
$$
k(B_0(G))=l(B_0(G))|\bZ(G)|_p+\sum_{i=1}^t l(B_0(\bC_G(x_i)))\geq|\bZ(G)|_p+t=k_p(G),
$$
as wanted.
\end{proof}
Finally, we can prove Theorem \ref{thm:BP21almostsimple}.

\begin{proof}[Proof of Theorem \ref{thm:BP21almostsimple}]

Recall from Remark \ref{rem:k7} that we may assume that $k:=k(B_0(A))\geq 7$. If $S$ is a sporadic group, Tits group,  group of Lie type with exceptional Schur multiplier, or alternating group $\alt_n$ with $n\leq 7$, then the result is readily checked using GAP and its Character Table Library \cite{GAP}. 
 We therefore assume that $S$ is not one of these groups. Throughout, let $P\in\Syl_p(A)$ and $P_0\in\Syl_p(S)$ such that $P_0=P\cap S$.

(I) If $S$ is an alternating group with $n\geq 8$, then  $A\in\{\alt_n, \sym_n\}$. Note that $2k\geq k(B_0(\sym_n))$. Let $n=pw+r$ with $0\leq r<p$. Then we have  $|P|\leq (2,p)\cdot|P_0|=(n!)_p=((pw)!)_p\leq p^{pw}$ by \eqref{eq:sizeSylowSw}. Further, by \cite[Theorem 1.10]{MO83} and \cite[Lemma 1 and p. 44]{olsson84}, we have 
\[k(B_0(\sym_{n}))=k(B_0(\sym_{pw}))=k(p,w)
>\pi(w)p\geq wp,\] where $\pi(w)$ denotes the number of partitions of $w$. Then \[|P_0|\leq |P|\leq p^{pw}<p^{2k}\leq \left(\frac{k^4}{4}\right)^{k}\] when combined with Lemma \ref{lem:HSF}, yielding a bound stronger than (a)-(c) in this case. 

\medskip

From now on, we assume $S$ is a simple group of Lie type. Let $S=G/\zent{G}$, where $G=\bG^F$ for a simple, simply connected reductive group $\bG$ and a Steinberg endomorphism $F\colon \bG\rightarrow\bG$, where $\zent{G}$ is the full, nonexceptional Schur covering group of $S$.  Write $k_0:=k_{\aut(S)}(B_0(S))$ so that $k_0\leq k$.

(II) We will first show \ref{thm:BP21simple}.  

First, assume $S$ is defined in characteristic $p$, so that $|P_0|=q^{|\Phi^+|}$, where $\Phi^+$ is the set of positive roots of $\bG$  (see \cite[Proposition 24.3]{MT11}).  
 We have $\irr{B_0(S)}=\irr{S}\setminus\{\mathrm{St}_S\}$, where $\mathrm{St}_S$ is the Steinberg character (see \cite[Theorem 6.18]{CE04}), and $k_0\geq \frac{q^r}{|\zent{G}|\cdot|\mathrm{Out}(S)|}$, as in \cite[Section 2D]{HSF21}.  
 Let $f$ be the integer (or half-integer, in the case of Suzuki and Ree groups $\tw{2}\type{G}_2(q^2)$, $\tw{2}\type{F}_4(q^2)$, $\tw{2}\type{B}_2(q^2)$) such that $q=p^f$, and note that $\sqrt{q^r}=p^{rf/2}\leq p^{rf}/f=q^r/f$, unless $q=8$ and $r=1$. In the latter case, $S=\PSL_2(8)$ and $|P_0|=8$, so the statement  holds. So, we assume $(q,r)\neq (8,1)$.

 Here, we include the full argument for the groups $\PSL_n^\epsilon(q)$ ($n\geq 2$), which correspond to $\bG$ of type $\type{A}_{n-1}$. Table \ref{tab:defchar} gives relevant values for various groups of Lie type, and from this information, the arguments in the other cases are similar.  So, let $S=\PSL_n^\epsilon(q)$. Then $|P_0|=q^{n(n-1)/2}$; $r=n-1$; $|\out(S)|\leq 2f\cdot (n, q-\epsilon)\leq 2fn$; and $|\zent{G}|=(n, q-\epsilon)\leq n$.   By Corollary \ref{cor:nunips}, we have  $k_0\geq n$. Together, this gives 
\[k_0\geq \frac{q^{n-1}}{2n^2f}\geq \frac{q^{(n-1)/2}}{2n^2}\geq \frac{q^{(n-1)/2}}{2k_0^2}.\] 
Then $q^{(n-1)/2}\leq 2k_0^3\leq k_0^4$, so $|P_0|=q^{n(n-1)/2}\leq k_0^{4n}\leq k_0^{4k_0}\leq k_0^{2k_0^2}$.

Finally, we may assume $S$ is a group of Lie type defined in characteristic different than $p$. If $S$ is of exceptional type, then Propositions \ref{prop:BP21except2} and \ref{prop:BP21except} yield \ref{thm:BP21simple}.

Hence, we may assume $S$ is of classical type, and we let $H, \wt{P}$, and $ \bar{P}$ be as in Section \ref{sec:classical}. Recall that we have $|P_0|\leq |\wt{P}|$. If $p=2$, we further have $|\wt{P}|\leq |\GL_n(q^2)|_2\leq 2^{(b+1)n}(n!)_2\leq 2^{(b+2)n}$, where  $2^{b+1}$ is the largest power of $2$ dividing $q^2-1$ and the last inequality is from \eqref{eq:sizeSylowSw}. In particular using Lemma \ref{lem:boundclassicalb2} and Corollary \ref{cor:nunips}, in this case $|P_0|\leq 2^{k_0^2+k_0}<k_0^{2k_0^2}.$

Now we assume $p$ is odd. If $\wt{P}$ is abelian, note that $\wt{P}=\bar{P}$ in the notation before. Then $|P_0|\leq p^{bw}<k^{2k_0^2}$, from Lemmas \ref{lem:HSF}, \ref{lem:boundclassicalb}, and \ref{lem:boundclassicalt}, and the statement holds. 
We are left with the case that $S$ is classical and $t\geq 1$. Then  by Lemmas \ref{lem:boundclassicalb} and \ref{lem:boundclassicalt}, along with \eqref{eq:sizeSylowSw}, we see that \[|P_0|\leq p^{bw}\cdot (w!)_p\leq p^{bw}\cdot p^{w}=p^{(b+1)w}\leq k_0^{k_0^2},\] which completes the proof of \ref{thm:BP21simple}.

(III) We now complete the proof of \ref{thm:BP21simplemain}.
 Let $G$  be defined over $\mathbb{F}_q$, where $q=q_0^f$ for some prime $q_0$ and integer $f$. (In the case of Suzuki and Ree groups, we instead let $q^2:=q_0^f$ with $f$ an odd integer.) Further, write $f:=p^{f'}\cdot m$ with $(m,p)=1$. 
 
 From part \ref{thm:BP21simple}, recall that $|P_0|\leq k^{2k^2}$. 
Note that $|P/P_0|=|A/S|_p$ and this number is at most $p^{f'+1}$ unless $S=\type{D}_n(q)$ or $\tw{2}\type{D}_n(q)$ with $p=2$ and $|A/S|_2\leq 2^{f'+3}$ or $S=\PSL_n^\epsilon(q)$ with $n\geq 3$ and $p\mid(n,q-\epsilon)$, in which case  $|A/S|_p$ divides $2p^{b+f'}$ with $p^b\mid\mid (q-\epsilon)$.

Recall that $\aut(S)=\wt{S}\rtimes \mathcal{D}$ with $\mathcal{D}$ a group of field and graph automorphisms as before. A Sylow $p$-subgroup of   $\wt{S}A\cap \mathcal{D}$ contains a cyclic group of size $p^{f''}$, where  $f''\leq f'$ and $|\wt{S}A\cap\mathcal{D}|_p\leq p^{f''+1}$. Then $A$ must also contain an element of order $p^{f''}$,  and hence elements of  orders $p^i$ for $1\leq i\leq f''$. Then $k_p(A)\geq f''$, and hence $k\geq f''$ by Lemma \ref{kp}. 

Now, if $S$ is not one of the exceptions mentioned above, we have  $|A|_p\leq |\wt{S}A|_p=|\wt{S}|_p\cdot |\wt{S}A\cap\mathcal{D}|_p\leq |P_0|\cdot p^{f''+1}$. If  $S=\type{D}_n(q)$ or $\tw{2}\type{D}_n(q)$ with $p=2$, we have $|\wt{S}A|_2\leq |P_0|\cdot 2^{f''+3}$. If $S=\PSL_n^\epsilon(q)$ with $n\geq 3$ and $p\mid(n,q-\epsilon)$, we have  $|\wt{S}A|_p\leq |P_0|\cdot  p^{b+f''+1}$, where $p^b\mid\mid(q-\epsilon)$. Then using \ref{thm:BP21simple} and Lemmas \ref{lem:boundclassicalb2}, and \ref{lem:boundclassicalb}, we have in each case that
\[|P|=|A|_p\leq k^{2k^2}\cdot p^{f''+k}.\] 
 Combining the above with Lemma \ref{lem:HSF}, we obtain $|P|\leq k^{2k^2}\cdot p^{2k}< k^{2k^2}\cdot k^{4k}$, completing the proof. 
\end{proof}

\begin{table}[H]\label{tab:defchar}\caption{Relevant Data for Bounding $|P_0|$ in Defining Characteristic}
\begin{tabular}{|c|c|c|c|c|}
\hline
Type of $\bG$ & Size of $\Phi^+$ & Rank $r$ & upper bound for $|\zent{G}|$ & upper bound for $|\out(S)|$\\
\hline\hline
$\type{A}_{n-1}$, $n\geq 2$ & $n(n-1)/2$ & $n-1$ & $n$ & $2fn$\\
\hline
$\type{B}_n$ or $\type{C}_n$, $n\geq 3$ & $n^2$ & $n$ & 2 & $2f$ \\
\hline
$\type{B}_2$ & $4$ & $2$ & $2$ & $2f$\\
\hline
$\type{D}_n$, $n\geq 5$ & $n(n-1)$ & $n$ & 4 & $8f$ \\
\hline

$\type{D}_4$ & $12$ & $4$ & 4 & $24f$ \\
\hline
$\type{F}_4$ & $24$ & $4$ & 1 & $2f$ \\
\hline
$\type{G}_2$ & $6$ & $2$ & 1 & $2f$ \\
\hline
$\type{E}_6$ & $36$ & $6$ & 3 & $6f$ \\
\hline
$\type{E}_7$ & $63$ & $7$ & 2 & $2f$ \\
\hline
$\type{E}_8$ & $120$ & $8$ & 1 & $f$ \\
\hline

\end{tabular}
\end{table}

\section{Proof of Theorem A}\label{sec:BP21proof}

In this section we complete the proof of Theorem \ref{thm:BP21principal}. We begin with some additional general observations that will be useful in  the proof.

\begin{lem}\label{lem: boundingcovering} Let $G$ be a finite group and let $N\trianglelefteq G$. If $b\in\Bl(N)$ is covered by $B\in\Bl(G)$ then $k(b)\leq|G:N|k(B)$.
\end{lem}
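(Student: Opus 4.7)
The plan is to establish the inequality by a straightforward double-counting argument on the bipartite incidence
\[
X := \{ (\theta, \chi) \in \Irr(b) \times \Irr(B) : [\chi_N, \theta] \neq 0 \}.
\]
Counting $X$ by projection onto each coordinate will immediately yield the bound, with Clifford's theorem controlling the $\chi$-side and the definition of block covering controlling the $\theta$-side.

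First I would verify that every $\theta \in \Irr(b)$ admits at least one $\chi \in \Irr(B)$ lying above it, so that the projection $X \to \Irr(b)$ is surjective. This is the standard Clifford theory of blocks: because $B$ covers $b$, every irreducible character of $b$ lies below some irreducible character of $B$ (see, e.g., \cite[Theorem 9.4]{nbook} or the equivalent formulation that the blocks covering $b$ are precisely those containing a character over some character of $b$, together with the fact that the set of blocks covered by $B$ is a single $G$-orbit). This step gives $|X| \geq k(b)$.

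Next I would bound the fiber over each $\chi \in \Irr(B)$. By Clifford's theorem, the irreducible constituents of $\chi_N$ form a single $G$-orbit, and for any constituent $\theta$ this orbit has size $|G : I_G(\theta)| \leq |G : N|$ since $N \leq I_G(\theta)$. Therefore each $\chi$ contributes at most $|G:N|$ pairs to $X$, giving $|X| \leq |G:N|\,k(B)$. Chaining the two bounds produces $k(b) \leq |G:N|\,k(B)$.

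There is essentially no obstacle: the only subtle point is the first step (every $\theta \in \Irr(b)$ is covered by some $\chi \in \Irr(B)$, not merely some $\chi \in \Irr(G)$), which is handled by a direct appeal to the block-Clifford correspondence. Everything else is immediate from Clifford's theorem.
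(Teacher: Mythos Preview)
Your argument is correct and is precisely the unpacking of the paper's one-line proof, which just cites \cite[Theorem 9.4]{nbook}: that theorem supplies exactly your first step (every $\theta\in\Irr(b)$ lies below some $\chi\in\Irr(B)$), and the double counting via Clifford's theorem is the obvious way to read off the inequality from it.
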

\begin{proof}
This is a direct consequence of \cite[Theorem 9.4]{nbook}
\end{proof}

\begin{lem}\label{lem:pelements} Let $G$ be a finite group and suppose that 
$N=S_1\times\cdots\times S_n$ is a normal subgroup, where 
where $S_i$ is simple nonabelian  and $p$ divides $|S_i|$ for all $i$. Then $n\leq k(B_0(G))$.
\end{lem}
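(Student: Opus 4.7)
\medskip

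\noindent\textbf{Proof plan.} The natural tool is the previous lemma, which gives $k_p(G)\leq k(B_0(G))$. Thus it suffices to produce $n$ distinct $G$-conjugacy classes of (nonidentity) $p$-elements inside $N$.

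Since each $S_i$ has order divisible by $p$, choose a nontrivial $p$-element $x_i\in S_i$ for each $i=1,\dots,n$. Because $N$ is a direct product, the partial products
\[
y_k := x_1 x_2 \cdots x_k \qquad (1\leq k\leq n)
\]
are $p$-elements of $N$, hence of $G$. I claim that $y_1,\dots,y_n$ lie in pairwise distinct $G$-conjugacy classes.

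The key observation is that conjugation by $G$ permutes the simple factors $S_1,\ldots,S_n$: for each $g\in G$ there is a permutation $\sigma_g\in\sym_n$ with $S_i^g=S_{\sigma_g(i)}$. Consequently, writing $y_k^g=x_1^g\cdots x_k^g$ with $x_i^g\in S_{\sigma_g(i)}$, the element $y_k^g$ projects nontrivially onto exactly $k$ of the simple factors $S_1,\dots,S_n$ (the nontriviality is preserved since each $x_i^g\neq 1$). This ``support size'' is therefore a $G$-conjugation invariant, and since $y_k$ and $y_{k'}$ have different support sizes whenever $k\neq k'$, they cannot be $G$-conjugate. This produces $n$ distinct nontrivial classes of $p$-elements in $G$, so $k_p(G)\geq n$, and the conclusion follows from Lemma \ref{kp}.

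The argument is essentially a one-step reduction to the previous lemma, so there is no real obstacle; the only small care needed is to verify that the permutation action of $G$ on the simple factors does not allow a nonidentity component of $y_k^g$ to become trivial, which is immediate because the $x_i$ are nonidentity.
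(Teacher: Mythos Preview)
Your proof is correct and essentially identical to the paper's: both choose nontrivial $p$-elements $x_i\in S_i$, form the partial products $y_k=x_1\cdots x_k$, observe that $G$ permutes the factors $S_i$ so that the number of nontrivial components is a $G$-conjugation invariant, and conclude via Lemma~\ref{kp}. You have simply spelled out the ``support size'' invariance a bit more explicitly than the paper does.
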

\begin{proof}
Let $1\neq x_i\in S_i$ be a $p$-element for every $i$. Note that $G$ acts on $\{S_1,\dots, S_n\}$ by conjugation. Therefore,  the elements $$(x_1,1,\dots,1), (x_1,x_2,1,\dots,1),\dots,(x_1,\dots,x_n)$$ are representatives of $n$ different conjugacy classes of $p$-elements of $G$. By Lemma \ref{kp}, $n\leq k(B_0(G))$. 
\end{proof}

\begin{lem}
\label{proj}
Suppose that $S_1,\dots, S_n$ are nonabelian simple groups of order divisible by a prime number $p$ and let $S_1\times\cdots\times S_n\leq G\leq\Aut(S_1)\times\cdots\times\Aut(S_n)$.  Let $k=k(B_0(G))$, where $B_0(G)$ is the principal $p$-block of $G$.
Then $$|G|_p\leq k^{4k^3}.$$
\end{lem}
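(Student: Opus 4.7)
The plan is to reduce to the almost simple case by projecting $G$ onto each factor $\Aut(S_i)$, then invoke Theorem~\ref{thm:BP21almostsimple}(a). For $1\leq i\leq n$, let $\pi_i\colon G\to\Aut(S_i)$ denote the natural projection, set $G_i:=\pi_i(G)$, and write $K_i:=\Ker(\pi_i)$. Since $S_i\leq G$ has trivial centre, the image of $S_i$ under $\pi_i$ is $\operatorname{Inn}(S_i)\cong S_i$, so $S_i\leq G_i\leq\Aut(S_i)$ is an almost simple group. The diagonal embedding $G\hookrightarrow G_1\times\cdots\times G_n$ gives
\[
|G|_p \;\leq\; \prod_{i=1}^{n}|G_i|_p.
\]

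Next I would show that $k_i:=k(B_0(G_i))\leq k$ for every $i$. Since $\pi_i$ is a surjection with kernel $K_i$, inflation identifies $\Irr(G_i)$ with the set of irreducible characters of $G$ containing $K_i$ in their kernel; under this identification $\Irr(B_0(G_i))\subseteq\Irr(B_0(G))$, a standard consequence of the fact that inflation preserves the principal block (the trivial character of $G_i$ inflates to the trivial character of $G$). Hence $k_i\leq k$. Since $p$ divides $|S_i|\leq |G_i|$, the defect group of $B_0(G_i)$ is non-trivial, so by Brauer's theorem (\cite[Theorem 3.18]{nbook}) we have $k_i\geq 2$; in particular $k\geq 2$.

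Applying Theorem~\ref{thm:BP21almostsimple}(a) to each $G_i$, together with the monotonicity of $x\mapsto x^{2(x^2+2x)}$ on $[2,\infty)$, yields
\[
|G_i|_p \;\leq\; k_i^{2(k_i^2+2k_i)} \;\leq\; k^{2(k^2+2k)}.
\]
Lemma~\ref{lem:pelements} gives $n\leq k$, and combining all the above,
\[
|G|_p \;\leq\; \prod_{i=1}^{n}|G_i|_p \;\leq\; k^{n\cdot 2(k^2+2k)} \;\leq\; k^{2k^3+4k^2} \;\leq\; k^{4k^3},
\]
the final inequality using $k\geq 2$ (so that $4k^2\leq 2k^3$). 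There is no real obstacle: the proof is essentially a direct assembly of Theorem~\ref{thm:BP21almostsimple}, Lemma~\ref{lem:pelements}, and the standard observation that the count of irreducibles in the principal block cannot increase on passing to a quotient.
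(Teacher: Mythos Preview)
Your proof is correct and follows essentially the same approach as the paper's: project onto each $\Aut(S_i)$, observe that the quotients $G_i\cong G/K_i$ are almost simple with $k(B_0(G_i))\leq k$ via inflation, apply Theorem~\ref{thm:BP21almostsimple}(a), and combine with Lemma~\ref{lem:pelements}. The only cosmetic difference is that the paper uses the slightly looser bound $|G/K_i|_p\leq k^{4k^2}$ (which dominates $k^{2(k^2+2k)}$ for $k\geq 2$) before multiplying.
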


\begin{proof}

Write $A_i=\Aut(S_i)$ and $A=A_1\times\cdots\times A_n$. Let $\pi_i$ be the restriction to $G$ of the projection  from $A$ onto $A_i$ for every $i$. Set $K_i=\Ker\pi_i$.  Notice  that  $G/K_i$ is isomorphic to an almost simple group $G_i$ with socle $S_i$.  Furthermore, the intersection of the $K_i's$ is trivial, so $G$ embeds into the direct product of the groups $G/K_i$.  Furthermore, $B_0(G/K_i)\subseteq B_0(G)$ for every $i$. 

By Theorem \ref{thm:BP21almostsimple}, we have that 
$$|G/K_i|_p\leq k^{4k^2} $$
for every $i$. 
Since $S_i$ is normal in $G$ for all $i$, by Lemma \ref{lem:pelements} we have that $n\leq k$, and hence 
$$|G|_p\leq\prod_{i=1}^n |G/K_i|_p\leq   k^{4k^3},$$
as desired.\end{proof}

We define the function 
$$f(k)=k^{k}\cdot (k!k)^{4(k!k)^3}.$$

\begin{thm}
\label{rad}
Let $G$ be a finite group and let $R$ be the $p$-solvable radical of $G$. Then $|G:R|_p\leq f(k)$, where $k=k(B_0(G))$.  
\end{thm}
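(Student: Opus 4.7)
The plan is to analyze the quotient $L := G/R$, which by the definition of the $p$-solvable radical has no nontrivial $p$-solvable normal subgroup, and to bound $|L|_p$ by combining Lemma \ref{proj} with a permutation bound. Since the Fitting subgroup $F(L)$ is nilpotent and hence $p$-solvable, $F(L)=1$, so $F^\ast(L)=E(L)=\Soc(L)=S_1\times\cdots\times S_n$ is a direct product of non-abelian simple groups. Because $C_L(\Soc(L))\leq Z(\Soc(L))=1$, conjugation gives an embedding $L\hookrightarrow \Aut(\Soc(L))$.

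The crucial structural observation will be that every $S_i$ has order divisible by $p$. Indeed, the product of those $S_j$ with $p\nmid |S_j|$ is characteristic in $\Soc(L)$, hence normal in $L$, and is a $p'$-group, therefore $p$-solvable, so it must be trivial. With this in hand, Lemma \ref{lem:pelements} gives $n\leq k$.

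Next I would let $L_0\trianglelefteq L$ denote the kernel of the permutation action of $L$ on the set $\{S_1,\dots,S_n\}$, so that $\Soc(L)\leq L_0\leq \prod_i\Aut(S_i)$, while $L/L_0$ embeds into $\Sym_n$. Lemma \ref{proj} then yields $|L_0|_p\leq k_0^{4k_0^3}$ where $k_0=k(B_0(L_0))$. To control $k_0$, I would first note that inflation gives $k(B_0(L))=k(B_0(G/R))\leq k(B_0(G))=k$, and then apply Lemma \ref{lem: boundingcovering} to the covering of $B_0(L_0)$ by $B_0(L)$ to obtain $k_0\leq |L:L_0|\cdot k(B_0(L))\leq n!\cdot k\leq k!\cdot k$.

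Putting everything together, $|L|_p=|L_0|_p\cdot |L/L_0|_p\leq (k!k)^{4(k!k)^3}\cdot n!_p\leq (k!k)^{4(k!k)^3}\cdot k^k=f(k)$. The only delicate point in this plan is the triviality of the $p'$-part of the socle; once this is noted, the rest is a clean assembly of Lemmas \ref{proj}, \ref{lem:pelements}, and \ref{lem: boundingcovering}, together with the standard fact that the principal block is preserved under inflation and covers the principal block of any normal subgroup.
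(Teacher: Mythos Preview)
Your proposal is correct and follows essentially the same route as the paper: the paper simply assumes without loss of generality that $R=1$ (which is exactly your passage to $L=G/R$ together with the inflation inequality $k(B_0(G/R))\leq k$), then sets $N=G\cap\prod_i\Aut(S_i)$ (your $L_0$), bounds $n\leq k$ via Lemma~\ref{lem:pelements}, applies Lemma~\ref{proj} to $N$, controls $k(B_0(N))$ by Lemma~\ref{lem: boundingcovering}, and bounds $|G:N|_p\leq(k!)_p\leq k^k$ exactly as you do. The only cosmetic difference is that you invoke $k(B_0(L))\leq k$ after using Lemma~\ref{lem:pelements} rather than before; since Lemma~\ref{lem:pelements} applied inside $L$ only yields $n\leq k(B_0(L))$, you should state the inflation bound first.
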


\begin{proof}
Without loss of generality, we may assume that $R=1$. Let $F=\bF^*(G)$ be the generalized Fitting subgroup, which in this case is a direct product of non-abelian simple groups of order divisible by $p$. Write $F=S_1\times\cdots\times S_n$. By Lemma \ref{lem:pelements}, we obtain that $n\leq k$. Since $\bC_G(F)\leq\bZ(F)=1$, it follows that $G$ embeds into $\Gamma=\Aut(F)$. Note that $A=\Aut(S_1)\times\cdots\times\Aut(S_n)$ is a normal subgroup of $\Gamma$ and $\Gamma/A$ is isomorphic to a subgroup of $\SSS_n$. In particular, $$|\Gamma/A|\leq n!\leq k!.$$ 

Put $N=G\cap A$ and note that $|G:N|\leq k!$. By the well-known Legendre's inequality, we have that $(k!)_p\leq p^k$, so  $(k!)_p\leq k^k$. Write $k'=k(B_0(N))$. It follows from Lemma \ref{proj} that
$$
|G|_p=|G:N|_p|N|_p\leq k^{k}\cdot  k'^{4k'^3}.
$$

Now, Lemma \ref{lem: boundingcovering} implies that 
$$
|G|_p\leq k^{k}\cdot (k!k)^{4(k!k)^3},
$$
as wanted.
\end{proof}

Recall that the socle $\Soc(G)$ of a finite group $G$ is the product of the minimal normal subgroups of $G$. We can write $\Soc(G)=A(G)\times T(G)$, where $A(G)$ is the product of the abelian minimal normal subgroups of $G$ and $T(G)$ is the product of the non-abelian minimal normal subgroups of $G$. Note that $T(G)$ is a direct product of non-abelian simple groups.

Finally, we set $g(k)=2^{2^k}f(k)^k$. 
The following completes the proof of Theorem \ref{thm:BP21principal}.

\begin{thm}
\label{exp}
Let $G$ be a finite group. Let $k=k(B_0(G))$. Then $|G|_p\leq g(k)$.
\end{thm}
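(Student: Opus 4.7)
The plan is to proceed by induction on $|G|$, the case $|G|=1$ being trivial. The main ingredients are Theorem \ref{rad} (handling $|G:R|_p$ for $R$ the $p$-solvable radical), Landau's theorem (providing the $2^{2^k}$ factor), and Lemma \ref{kp} (bounding the number of $p$-chief factors).

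First I would reduce to the case $O_{p'}(G)=1$. Indeed, setting $Q:=O_{p'}(G)$, if $Q\neq 1$ then for $\bar G:=G/Q$ we have $|\bar G|_p=|G|_p$ and $k(B_0(\bar G))\leq k$ (since characters of $B_0(\bar G)$ inflate to characters of $B_0(G)$), so by the inductive hypothesis $|G|_p=|\bar G|_p\leq g(k(B_0(\bar G)))\leq g(k)$, and this case is closed.

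Now assume $O_{p'}(G)=1$, and let $R$ be the $p$-solvable radical of $G$. By Theorem \ref{rad}, $|G:R|_p\leq f(k)$, and so it suffices to bound $|R|_p$ by $2^{2^k}f(k)^{k-1}$. Because $O_{p'}(R)=1$ and $R$ is $p$-solvable, standard $p$-solvable theory yields $F(R)=O_p(R)$ with $C_R(O_p(R))\leq O_p(R)$, whence $R/O_p(R)$ embeds in $\Aut(O_p(R))$ and $|R|_p$ is essentially controlled by $|O_p(G)|$ up to a polynomial factor. To bound this, I would apply Landau's theorem $|H|\leq 2^{2^{k(H)}}$ to $R$ (producing the $2^{2^k}$ factor), and then control $k(R)$ using the at most $k$ abelian $p$-chief factors $V_1,\dots,V_m$ of $R$ (by Lemma \ref{kp}), each of which contributes to $k(R)$ in a way that can be bounded using the at-most-$k$ $G$-orbits on each $\Irr(V_i)$ (by the Brauer permutation lemma applied to principal blocks) together with Lemma \ref{lem: boundingcovering}.

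The main obstacle is controlling $k(R)$ in terms of $k$ alone. Lemma \ref{lem: boundingcovering} only gives $k(B_0(R))\leq|G:R|\cdot k$, and $|G:R|$ has an unbounded $p'$-part in general, so a finer argument is needed. This requires combining Brauer's third main theorem (so that $B_0(R)$ is the unique block of $R$ covered by $B_0(G)$), the Clifford/orbit relation that each $G$-orbit on $\Irr(B_0(R))$ is represented in $\Irr(B_0(G))$, and the structure of $G/R$ (which embeds in $\Aut(\Soc(G/R))$ with $\Soc(G/R)$ a direct product of at most $k$ nonabelian simples of order divisible by $p$, by Lemma \ref{lem:pelements}), in order to ultimately obtain the explicit bound $|R|_p\leq 2^{2^k}f(k)^{k-1}$. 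Combined with Theorem \ref{rad}, this gives $|G|_p=|R|_p\cdot |G:R|_p\leq 2^{2^k}f(k)^{k-1}\cdot f(k)=g(k)$.
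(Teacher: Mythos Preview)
Your approach has a genuine gap at the step where you try to bound $|R|_p$. You want to apply Landau's theorem to $R$, which requires bounding $k(R)$ in terms of $k$. Since $R$ is $p$-solvable with $\bO_{p'}(R)=1$, indeed $B_0(R)$ is the unique block of $R$ and $k(R)=k(B_0(R))$; but Clifford theory only gives that the number of $G$-orbits on $\Irr(R)$ is at most $k$, not that $k(R)$ itself is bounded. Orbit sizes are bounded by $|G:R|$, whose $p'$-part is uncontrolled, so you cannot conclude $k(R)\leq k$ (or any function of $k$) this way. Your fallback suggestion of bounding $k(R)$ via the at most $k$ abelian $p$-chief factors $V_i$ and the $G$-orbits on $\Irr(V_i)$ does not work either: knowing there are few $G$-orbits on each $V_i$ does not bound $|V_i|$, and hence does not bound $|R|_p$ or $k(R)$ directly. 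You yourself flag this obstacle but do not resolve it.

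The paper avoids this issue entirely by not splitting at the $p$-solvable radical. Instead it builds an iterated normal series
\[
1\leq O_1\leq E_1\leq O_2\leq E_2\leq\cdots
\]
with $O_i/E_{i-1}=\bO_{p'}(G/E_{i-1})$ and $E_i/O_i=T(G/O_i)$ the nonabelian part of the socle. Each nontrivial $E_i/O_i$ contributes a chief factor of order divisible by $p$, so by Lemma~\ref{kp} the series stabilizes at some $O=O_{k+1}$. At that point $\bF^*(G/O)$ is a $p$-group, so $B_0(G/O)$ is the \emph{unique} block of $G/O$; hence $k(G/O)=k(B_0(G/O))\leq k$ and Landau gives $|G/O|\leq 2^{2^k}$. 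Each layer $|E_i/O_i|_p$ is then bounded by $f(k)$ via Theorem~\ref{rad} applied to the quotient $G/C_i$, where $C_i/O_i=\bC_{G/O_i}(E_i/O_i)$, since $G/C_i$ has trivial $p$-solvable radical and $k(B_0(G/C_i))\leq k$. Multiplying, $|G|_p\leq 2^{2^k}f(k)^k=g(k)$. The crucial difference is that Landau is applied not to $R$ but to the top quotient $G/O$, where the unique-block property makes $k(G/O)\leq k$ automatic.
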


\begin{proof}
Let $O_1=\bO_{p'}(G)$. Set $E_1/O_1=T(G/O_1)$, where $T(G/O_1)$ is the non-abelian part of the socle of $G/O_1$. For $i>1$, we define $O_i/E_{i-1}=\bO_{p'}(G/E_{i-1})$ and $E_i/O_i=T(G/O_i)$, so that we have a normal series $1\leq O_1\leq E_1\leq O_2\leq E_2\leq\cdots$. Note that if $O_i<E_i$ then $E_i/O_i$ is a direct product of simple groups of order divisible by $p$. By Lemma \ref{kp} we conclude that 
$O_{k+1}=E_{k+1}=O_{k+2}=\cdots$. Set $O=O_{k+1}$. Note that $F/O=\bF^*(G/O)$ is a $p$-group. Since $\bC_{G/O}(F/O)\leq F/O$, \cite[Corollary V.3.11]{fei} implies that $B_0(G/O)$ is the unique $p$-block of $G/O$. Since $B_0(G/O)\subseteq B_0(G)$, Landau's theorem implies that 
$$|G/O|\leq2^{2^k}.$$

Now, for $i\leq k$, let $C_i/O_i=\bC_{G/O_i}(E_i/O_i)$, so that $G/C_i$ is isomorphic to a subgroup of $\Aut(E_i/O_i)$ that contains $E_iC_i/C_i\cong E_i/O_i$. Notice that the $p$-solvable radical of $G/C_i$ is trivial, so by Theorem  \ref{rad} applied to $G/C_i$, we have that 
$$
|E_i/O_i|_p\leq|G/C_i|_p\leq f(k).
$$
It follows that 
$$
|G|_p=|G:O|_p\prod_{i=1}^k|E_i/O_i|_p\leq 2^{2^k}f(k)^k,
$$
as wanted.
\end{proof}

\begin{rmk} {\rm
Arguing in a similar way, we can see that if a finite group $G$ does not have simple groups of Lie type in characteristic different from $p$ as composition factors, then $|G:\bO_{p'}(G)|$ can be bounded above in terms of $k(B_0(G))$. We sketch the proof. First, we know that if $p$ is a prime and $S$ is a simple group of Lie type in characteristic $p$ or an alternating group, then $|S|$ is bounded from above in terms of $|S|_p$. Therefore, the same happens for almost simple groups with socle of Lie type in characteristic $p$ or alternating. Now, let $R$ be the $p$-solvable radical of a finite group $G$. We can argue as in the proof of Theorem \ref{rad} to see that $|G:R|$ is bounded from above in terms of $k(B_0(G))$. Using Lemma \ref{lem: boundingcovering}, we see that $k(B_0(R))$ is bounded from above in terms of $k(B_0(G))$. Since $R$ is $p$-solvable, $\Irr(B_0(R))=\Irr(R/\bO_{p'}(R))$. Using that $\bO_{p'}(R)=\bO_{p'}(G)$ and Landau's theorem, we deduce that $|R:\bO_{p'}(G)|$ is bounded from above in terms of $k(B_0(R))$. The result follows.}
\end{rmk}

\begin{rmk}{\rm 
We have already mentioned that the case $p=2$ of Brauer's Problem 21 was already known by 
\cite{Kulshammer-Robinson} and \cite{ruh}. However, this relies on Zelmanov solution of the restricted Burnside problem. As discussed in \cite{VZ99} the bounds that are attainable in this problem are of a magnitude that is incomprehensibly large. The bound that we have obtained for principal blocks, although surely far from best possible, is much better than any bound that relies on the restricted Burnside problem.}
\end{rmk}

Recently, there has been a large interest in studying relations among (principal) blocks for different primes. For instance, what can we say about the set of irreducible characters that belong to some principal block? The groups with the property that all irreducible characters belong to some principal block were determined in \cite{BZ11}. 
As a consequence of Brauer's Problem 21 for principal blocks, we see that for any integer $k$ there are finitely many groups with at most $k$ irreducible characters in some principal block. Note that this is a strong form of Landau's theorem. In this corollary, given a prime $p$,  we write $B_p(G)$ to denote the principal $p$-block of $G$.

\begin{cor}
The order of a finite group is bounded from above in terms of $|\bigcup_p\Irr(B_p(G))|$.
\end{cor}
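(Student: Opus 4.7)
Set $k := |\bigcup_p \Irr(B_p(G))|$. The basic observation that drives everything is that for each prime $p$ dividing $|G|$ one has the inclusion $\Irr(B_p(G)) \subseteq \bigcup_p \Irr(B_p(G))$, so that $k(B_p(G)) \leq k$. Thus every principal block of $G$ has at most $k$ ordinary irreducible characters, and we are set up to feed each of them into the machinery developed in the paper.

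The plan is to bound $|G|$ by bounding (a) the set of primes dividing $|G|$ and (b) each Sylow $p$-subgroup order $|G|_p$, each in terms of $k$ alone. For (a), Lemma \ref{lem:HSF} applied to $B_p(G)$ gives
\[ p \leq \tfrac{1}{4}\, k(B_p(G))^2 + 1 \leq \tfrac{1}{4}k^2 + 1 \]
for every prime $p$ dividing $|G|$. Hence the set of prime divisors of $|G|$ is contained in the finite set of primes up to $\tfrac14 k^2+1$, and in particular the number $\omega(|G|)$ of distinct prime divisors of $|G|$ is bounded above by some function $N(k)$ (for instance $N(k) := \tfrac14 k^2 + 1$). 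For (b), Theorem \ref{exp} (i.e.\ Theorem \ref{thm:BP21principal}) applied to each prime yields $|G|_p \leq g(k(B_p(G))) \leq g(k)$, using that the function $g(k) = 2^{2^k} f(k)^k$ is monotone increasing in $k$.

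Combining the two ingredients gives
\[ |G| \;=\; \prod_{p\mid |G|} |G|_p \;\leq\; g(k)^{N(k)}, \]
an explicit bound depending only on $k$, which is what the corollary asserts. There is no real obstacle here: the whole point of the corollary is that it packages together the two main inputs of the section, namely the ``prime-bound'' coming from Hung--Schaeffer Fry (Lemma \ref{lem:HSF}) and the blockwise Landau-type bound of Theorem \ref{thm:BP21principal}. The only point worth verifying carefully is that $g$ is monotone, so that one may replace $k(B_p(G))$ by the uniform upper bound $k$ in the Sylow estimate.
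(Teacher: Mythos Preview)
Your proof is correct and follows essentially the same strategy as the paper: bound each $|G|_p$ via Theorem~\ref{exp} and bound the set of prime divisors of $|G|$, then multiply. The only cosmetic difference is that the paper bounds the primes by observing $p\leq |G|_p\leq g(k)$ rather than invoking Lemma~\ref{lem:HSF} separately, which avoids the (harmless) appeal to monotonicity of $g$ but gives a worse explicit bound on the primes.
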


\begin{proof}
By Theorem A, we know that for any prime $p$, $|G|_p$ is bounded from above in terms of $k(B_p(G))$. It follows that 
$|G|_p$ is bounded from above in terms of $|\bigcup_p\Irr(B_p(G))|$. In particular, if $p$ is a prime divisor of $|G|$, then $p$ is bounded from above in terms of $|\bigcup_p\Irr(B_p(G))|$. The result follows.
\end{proof}

\section{Blocks with three irreducible characters}\label{sec:kb3reduction}

In this section, we prove Theorem C. As usual, if $B$ is a $p$-block of a finite group $G$, $l(B)$ is the number of irreducible $p$-Brauer characters in $B$. 
By \cite{K84}, we know that if $k(B)=3$ and  $l(B)=1$, then the defect group is cyclic of order $3$. So we are left with the case $l(B)=2$.

\begin{lem}\label{lemamoritanilpotent} Let $N\lhd G$ and let $B$ be a $p$-block of $G$ with defect group $D$. Suppose that $B$ covers a $G$-invariant block $b$ of $N$ such that $D$ is a defect group of $b$. If $b$ is nilpotent then $k(B)=k(B')$, where $B'\in{\rm Bl}(\norm GD)$ is the Brauer first main correspondent of $B$.
\end{lem}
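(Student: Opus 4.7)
My plan is to combine Puig's theory of nilpotent blocks with Brauer's first main theorem. Since $b$ is nilpotent with defect group $D$, Puig's structure theorem gives a (splendid) Morita equivalence between $b$ and its Brauer first main correspondent $b' \in \Bl(\norm N D)$; both blocks have the same common defect group $D$, and their source algebras are isomorphic to a twisted group algebra on $D$. In particular, $b'$ is again nilpotent, and by the equivariance of Brauer correspondence combined with the $G$-invariance of $b$, the block $b'$ is $\norm G D$-invariant. Moreover, since $B$ covers $b$, one has that $B'$ covers $b'$.

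I will then lift the equivalence to the covering blocks using the K\"ulshammer--Puig theorem on extensions of $G$-invariant nilpotent blocks. That theorem identifies the blocks of $G$ covering $b$ with the blocks of a central $p'$-extension $\hat L$ of $E := G/N$, via a Morita equivalence that preserves the number of ordinary irreducible characters; hence $k(B) = k(\hat B)$ for the corresponding block $\hat B \in \Bl(\hat L)$. Applying the same theorem to the pair $(\norm G D, \norm N D, B', b')$ produces a block $\hat B'$ of a central $p'$-extension $\hat L'$ of $E' := \norm G D / \norm N D$ with $k(B') = k(\hat B')$.

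It remains to identify $(\hat L, \hat B)$ with $(\hat L', \hat B')$. For this I will use a Frattini argument applied to a maximal $b$-Brauer pair $(D, e_D)$: since $b$ is $G$-invariant, $G = N\cdot \norm G {(D, e_D)}$, giving $E \cong \norm G {(D, e_D)}/\norm N {(D, e_D)}$. Because $b$ is nilpotent, the inertial quotient at $(D, e_D)$ is trivial, and a short calculation identifies the right-hand side with $E'$. Furthermore, the $2$-cocycle governing the central extension is read off from the source algebra of $b$, which is preserved under the Brauer correspondence $b \leftrightarrow b'$, so $\hat L \cong \hat L'$ and $\hat B$ corresponds to $\hat B'$ under this identification. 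Chaining the equalities yields $k(B) = k(\hat B) = k(\hat B') = k(B')$.

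The main technical obstacle is the last step: verifying that the K\"ulshammer--Puig parametrization of blocks covering a nilpotent block is compatible with passage to $\norm G D$ via Brauer correspondence. This is a source-algebra computation which, while standard in the nilpotent-block literature, demands careful bookkeeping; everything else in the argument is routine once Puig's nilpotent-block theorem and the equivariance of Brauer correspondence are in hand.
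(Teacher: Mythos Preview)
Your approach is correct in substance and rests on the same tool the paper uses, namely the K\"ulshammer--Puig theory of extensions of nilpotent blocks, but you are reconstructing by hand a Morita equivalence that the paper simply quotes. The paper's argument is two lines: since $D$ is a defect group of the $G$-invariant block $b$, the Harris--Kn\"orr correspondent of $B$ over $b$ is a block of $\norm G D$ with defect group $D$ inducing to $B$, hence equals the Brauer first main correspondent $B'$; then the K\"ulshammer--Puig paper \cite{KP90} (as summarised at the start of \cite[Section~3]{Rob02}) gives directly that $B$ and its Harris--Kn\"orr correspondent $B'$ are Morita equivalent, so $k(B)=k(B')$.

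Your route, by contrast, applies the K\"ulshammer--Puig parametrisation separately to the pairs $(G,N,b)$ and $(\norm G D,\norm N D,b')$ and then matches the resulting central extensions and blocks. This works, and your Frattini argument for $G/N\cong \norm G D/\norm N D$ is fine; but the ``main technical obstacle'' you flag, compatibility of the K\"ulshammer--Puig data with passage to $\norm G D$, is precisely the content of the Morita equivalence already established in \cite{KP90} and explained in \cite{Rob02}, so you are re-proving a cited result rather than invoking it. Note also that your claim ``$B'$ covers $b'$'' is exactly the Harris--Kn\"orr correspondence; naming it would tighten the argument and make the identification of $B'$ with the relevant block over $b'$ immediate, without the source-algebra bookkeeping.
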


\begin{proof}
Since $D$ is a defect group of $b$, we have that the Harris-Kn\"orr correspondent of $B$ (see \cite[Theorem 9.28]{nbook}) with respect to $b$ is $B'$, the Brauer first main correspondent of $B$. By the work in \cite{KP90} (see the explanation at the beginning of \cite[Section 3]{Rob02}, for instance) we have that $B$ and $B'$ are Morita equivalent, and hence, they have the same number of irreducible characters.
\end{proof}

We write ${\rm cd}(B)$ to denote the set of degrees of the irreducible (ordinary) characters in $B$. We write $k_0(B)$ to denote the number of irreducible (ordinary) characters of height zero in $B$.
The following is Theorem \ref{thm:kb3reduction}.

\begin{thm}  Let $G$ be a finite group and let $B$ be a $p$-block of $G$.  Suppose that Condition B holds for $(S,p)$ for all simple non-abelian composition factors $S$ of $G$. Let $D$ be a defect group of $B$. If $k(B)=3$, then $|D|=3$.
\end{thm}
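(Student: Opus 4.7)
The plan is to proceed by induction on $|G|$. By Kiyota's theorem \cite{K84}, the case $l(B)=1$ already gives $|D|=3$, so we may assume $l(B)=2$ in a minimal counterexample $(G,B)$ with $k(B)=3$, $l(B)=2$, and $|D|>3$. Standard Clifford-theoretic reductions --- Fong--Reynolds applied to normal $p'$-subgroups, together with Lemma \ref{lemamoritanilpotent} to handle nilpotent blocks covered by $B$ --- allow us to assume $\oh{p'}{G}$ is a cyclic central $p'$-group on which $B$ is faithful.

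Next, examine the generalized Fitting subgroup. If no component of $G$ has order divisible by $p$, then factoring out the layer (which only contributes defect zero blocks to $B$), together with the above reductions, collapses $G$ to a $p$-solvable quotient, where K\"ulshammer's resolution of the conjecture for $p$-solvable groups \cite{kul2} forces $|D|=3$ and contradicts our choice. Hence some component $K$ of $G$ has $p\mid|K|$. Writing $S:=K/\zent K$, the hypothesis provides Condition B for $(S,p)$. Let $b\in\Bl(K)$ be a block covered by $B$, and by a further central quotient assume $b$ is faithful on $\zent K$, which is a cyclic $p'$-group after the reductions above.

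The key quantitative step is the inequality
\[
k(B) \;\geq\; |\Irr(b)/G| \;\geq\; |\Irr(b)/\Aut(K)|,
\]
which follows from the Clifford correspondence between $\Irr(B)$ and $\Irr(b)$ together with the fact that $G/\cent{G}{K}$ embeds into $\Aut(K)$. In particular $|\Irr(b)/\Aut(K)|\leq 3$. If $b$ is non-principal with $|{\rm cd}(b)|>1$ and its defect group is neither cyclic nor elementary abelian, then Condition B yields $|\Irr(b)/\Aut(K)|\geq 4$, a contradiction.

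The remaining configurations of $b$ --- principal; $|{\rm cd}(b)|=1$; or cyclic or elementary abelian defect --- must be dispatched separately. Cyclic defect is controlled by Dade's theory via $k(b)=e+(|D|-1)/e$ together with $l(b)<k(b)$; elementary abelian defect falls to the (now-proved) Brauer height zero conjecture combined with known structural results; a singleton ${\rm cd}(b)$ forces severe restrictions on $b$. The principal-block case is the main obstacle, since Condition B is not available there: one must instead combine character-count lower bounds for principal blocks of (quasi)simple groups developed in Section \ref{sec:sectionsimplesBP21} (notably Lemma \ref{lem:HSF}) with a careful analysis of how the diagonal action of $G$ on the components contributes to $k(B)$, in order to force strictly more than $3$ characters in $B$.
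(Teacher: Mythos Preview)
Your proposal misreads Condition~\ref{quasisimples2}. The hypothesis there is that the defect group of $b$ is \emph{non-cyclic and elementary abelian}, not ``neither cyclic nor elementary abelian''. Consequently your division of cases is inverted: in a minimal counterexample one first shows (via \cite[Corollary~7.2]{hkks} and Dade's theory) that $D$ \emph{is} non-cyclic elementary abelian, and this is precisely the situation in which Condition~B is invoked to produce four $\Aut(K)$-orbits in $\Irr(b)$. Your plan instead relegates ``elementary abelian defect'' to a side case to be handled by Brauer's height zero conjecture, but BHZ gives no such orbit count; there is nothing in that direction that yields $k_{\aut(K)}(b)\geq 4$. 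So the case you defer is in fact the entire content of the argument, and the case you claim to handle via Condition~B is vacuous.

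There is a second, independent gap: you never establish that the defect group of the block $b$ of the component $K$ coincides with $D$. The paper obtains this through a key counting step (Brauer's formula \cite[Theorem~5.12]{nbook} with $l(B)=2$ and $|\zent G|_p=1$ forces all nontrivial elements of $D$ to be $G$-conjugate), from which $D\leq N$ for every non-central normal subgroup $N$ follows. Without this, you cannot transfer the hypotheses on $D$ (non-cyclic, elementary abelian) to the block $b$, so Condition~B does not apply. The same conjugacy fact is also what forces the minimal normal subgroup over $\zent G$ to be a single simple factor (Step~9 in the paper), rather than merely ``some component''; your appeal to an arbitrary component $K$ with $p\mid |K|$ does not by itself rule out several components splitting $D$ among them. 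Finally, the paper does not treat the principal-block case as a separate obstacle requiring the machinery of Section~\ref{sec:sectionsimplesBP21}; once the reductions above are in place, Condition~B (together with the $|{\rm cd}(b)|>1$ step via \cite{OT83} and Lemma~\ref{lemamoritanilpotent}) finishes the argument directly.
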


\begin{proof}

We proceed by induction on $|G|$. Notice that we may assume $D>1$ is elementary abelian by Brauer theorem (see \cite[Theorem 3.18]{nbook}, for instance) and \cite[Corollary 7.2]{hkks} and that $l(B)=2$ by \cite{K84}.  

\medskip

\textit{Step 0. We may assume that $p$ is odd.} 

Suppose that $p=2$. By \cite[Corollary 1.3(i)]{Lan81}, if $|D|>2$ we have that $4$ divides $k_0(B)\leq k(B)=3$, which is absurd. Hence we have that $|D|=2$. But in this case we know that $k(B)=2$, by \cite{Br82}. This is a contradiction, so $p$ is odd.

\medskip

\textit{Step 1. We may assume $\bO_{p}(G)=1$.} 

Let $M={\rm \textbf{O}}_{p}(G)$, and let $\bar{B}\in {\rm Bl}(G/M)$ dominated by $B$ with defect group $D/M$ (\cite[Theorem 9.9(b)]{nbook}). Since $M$ is a $p$-group, it has just one $p$-block, the principal one, so $B$ covers $B_0(M)$. By \cite[Theorem 9.4]{nbook} if $1_M\neq \theta\in{\rm Irr}(M)$, there is $\chi\in{\rm Irr}(B)$ over $\theta$, hence $\chi$ does not lie in $\bar{B}$. Then $k(\bar{B})\leq 2$. If $k(\bar{B})=1$, then $D=M$ and $D$ is normal in $G$. In this case $|D|=3$ by \cite[Theorem 4.1]{KNST14}. If $k(\bar{B})=2$, then by \cite{Br82}, we have $p=2$. This contradicts Step 0.

\medskip

\textit{Step 2. If $N$ is a normal subgroup of $G$, and $b$ is a $p$-block of $N$ covered by $B$, we may assume that $b$ is $G$-invariant.} 

Let $G_b$ be the stabilizer of $b$ in $G$. By the Fong-Reynolds correspondence (\cite[Theorem 9.14]{nbook}), if $c$ is the block of $G_b$ covering $b$ such that $c^G=B$, we have that $k(c)=k(B)=3$ and if $E$ is a defect group of $c$, then $E$ is a defect group of $B$. If $G_b<G$, by induction we are done.

\medskip 

\textit{Step 3. We may assume that if $N$ is a normal subgroup of $G$ and $b$ is a $p$-block of defect zero of $N$ covered by $B$, then $N$ is central and cyclic. In particular, we may assume that $\zent G=\oh {p'} G$ is cyclic.} 

Write $b=\{\theta\}$. Since $\theta$ is of defect zero, we have that $(G,N,\theta)$ is an ordinary-modular character triple and there exists $(G^*,N^*,\theta^*)$ an isomorphic ordinary-modular character triple with $N^*$ a $p'$-group central in $G^*$ and cyclic (see \cite[Problems 8.10 and 8.13]{nbook}). Notice also that since $G^*/N^*\cong G/N$, the set of non-abelian composition factors of $G^*$ is contained in the set of non-abelian composition factors of $G$, so Condition B holds for all non-abelian composition factors of $G^*$. 
 If $$*:{\rm Irr}(G|\theta)\rightarrow {\rm Irr}(G^*|\theta^*)$$ is the bijection given by the isomorphism of character triples and $B=\{\chi_1,\chi_2,\chi_3\}$, we have that $B^*=\{\chi_1^*,\chi_2^*,\chi_3^*\}$ is a $p$-block of $G^*$. Now, if $D^*$ is a defect group of $B^*$ and $|D^*|=3$, we claim that $|D|=3$ (notice that in this case $p=3$, so we just need to prove that $|D|=p$). Indeed, let $\chi\in{\rm Irr}(B)$ of height zero. Since isomorphism of character triples preserves ratios of character degrees and all the characters in $B^*$ are of height zero (because $D^*$ has prime order), we have

$$\frac{|G:D|_p}{|N|_p}=\left(\frac{\chi(1)}{\theta(1)}\right)_p=\chi^*(1)_p=|G^*:D^*|_p=\frac{|G^*:N^*|_p}{p}=\frac{|G:N|_p}{p}.$$ 
Since $b$ is $G$-invariant, we have that $D\cap N$ is a defect group of $b$ by \cite[Theorem 9.26]{nbook}, so $D\cap N=1$ because $b$ has defect zero. Now,  we have
$$|D|=\frac{|G|_p}{|G:D|_p}=\frac{|G:N|_p|N|_p}{|G:D|_p}=p,$$ as claimed. Hence we may assume that $N$ is central and cyclic. In particular, by Step 1 we have that $\zent G=\oh {p'} G$ is cyclic.

\medskip

\textit{Step 4. There is a unique $G$-conjugacy class of non-trivial elements in $D$.}

Let $b$ be the $p$-block of $N$ covered by $B$. Since $b$ is $G$-invariant, we have that $D\cap N$ is a defect group of $b$ by \cite[Theorem 9.26]{nbook}. By a theorem of Brauer (\cite[Theorem 5.12]{nbook}) we have that $$k(B)=l(B)|\zent G|_p+\sum_{i=1}^k\sum_{\substack{b\in{\rm Bl}(\cent G {x_i})\\ b^G=B}} l(b),$$ where $\{x_1,x_2,\ldots,x_k\}$ are the representatives of the non-central $G$-conjugacy classes of $p$-elements of $G$. Since $|\zent G|_p=1$ by Step 1, we have 

$$k(B)=l(B)+\sum_{i=1}^k\sum_{\substack{b\in{\rm Bl}(\cent G {x_i})\\ b^G=B}} l(b).$$

By \cite[Theorem 4.14]{nbook}, if $x_i\in D$, then there is $b\in{\rm Bl}(\cent G {x_i})$ such that $b^G=B$. Since $l(B)=2$ and $k(B)=3$, we have that there is just one $G$-conjugacy class of non-trivial elements in $D$. 

\medskip

\textit{Step 5. If $N$ is a non-central normal subgroup of $G$, then $D\leq N$. In particular, if $b$ is the only block of $N$ covered by $B$, then $D$ is a defect group of $b$.} 

Let $b$ be the $p$-block of $N$ covered by $B$. Again, since $b$ is $G$-invariant, we have that $D\cap N$ is a defect group of $b$ (by \cite[Theorem 9.26]{nbook}). Since $D\cap N>1$ (otherwise $b$ is of defect zero and $N$ is central by Step 3), we have that there is an element $1\neq x\in D\cap N$. If $1\neq y\in D$, $y$ is $G$-conjugate to $x$ by Step 4 and thus $y\in N$, as wanted.

\medskip

\textit{Step 6. If $N$ is a normal subgroup of $G$, $b$ is the block of $N$ covered by $B$ and all the irreducible characters in $b$ have the same degree, then $N$ is central.}

Suppose that $N$ is not central. By Step 5 we have that $D$ is a defect group of $b$.  By \cite[Proposition 1 and Theorem 3]{OT83} we have that $D$ is abelian and has inertial index 1. By \cite[1.ex.3]{BP80}, we know that $b$ is nilpotent. Hence by Lemma \ref{lemamoritanilpotent} we have that $k(B')=k(B)=3$, where $B'$ is the Brauer first main correspondent of $B$ in $\norm G D$. If $\norm G D< G$, by induction we are done. Hence we may assume that $D\lhd G$, but this is a contradiction with Step 1. Therefore $N$ is central.

\medskip

\textit{Step 7. If $N$ is a normal subgroup of $G$, and $b$ is the unique block of $N$ covered by $B$, then ${\rm Irr}(b)$ has at most three $G/\cent G N$-orbits}. 

Suppose that there are more than three $G/\cent G N$-orbits in ${\rm Irr}(b)$, and let $\theta_i\in{\rm Irr}(b)$ be a representative for these orbits (so there are at least four of them). By \cite[Theorem 9.4]{nbook} we can take $\chi_i\in{\rm Irr}(B)$ lying over $\theta_i$. By Clifford's theorem, the $\chi_i$ are all different. But this is a contradiction since $k(B)=3$.

\medskip

\textit{Step 8. We may assume that $D$ is not cyclic}. 

Otherwise, by Dade's theory of blocks with cyclic defect \cite{Dad66}, we have that $k(B)=k_0(B)=k_0(B')=k(B')$ where $B'\in{\rm Bl}(\norm G D|D)$ is the Brauer correspondent of $B$, and hence we may assume that $D$ is normal in $G$. In this case we are done by Step 1.

\medskip

\textit{Step 9. Write $Z=\zent G$ and $\overline{G}=G/Z$. Then $\overline{G}$ has a unique minimal normal subgroup $\overline{K}=K/Z$, which is simple.}

Let $K/Z$ be a minimal normal subgroup of $G/Z$. Since $Z={\rm \textbf{O}}_{p'}(G)$, we have that $K/Z$ is not a $p'$-group. Since ${\rm \textbf{O}}_p(G)=1$, $K/Z$ is not a $p$-group. Hence $K/Z$ is semisimple. Notice that $K/Z$ is the unique minimal normal subgroup of $G/Z$. Indeed, if $K_1/Z , K_2/Z $ are minimal normal subgroups of $G/Z$, then by Step 5, $D\subseteq K_1\cap K_2=Z =\oh {p'} G$ and hence $D=1$, a contradiction. 

\smallskip

Write $\overline{K}=K/Z$. Then $\overline{K}=\overline{S_1}\times\cdots\times\overline{S_t}$, where $\overline{S_i}$ is non-abelian simple and $\overline{S_i}=\overline{S_1}^{g_i}$ for some $g_i\in G$. Write $\overline{S_i}=S_i/Z$ and notice that $S_i=S_1^{g_i}$. Notice that since $S_i/\zent {S_i}=S_i/Z$ is simple, we have that $S_i'$ is a component of $G$ and hence $[S_i',S_j']=1$ whenever $i\neq j$ (\cite[Theorem 9.4]{isaacs}). Furthermore, $S_i=S_i'Z$, so $[S_i,S_j]=1$ whenever $i\neq j$. 

\smallskip

We want to show that $t=1$. By Step 5 we have that $D$ is a defect group of $b$, the only block in $K$ covered by $B$. If $D\cap S_i=1$ for all $i=1,\ldots,t$, then $D=1$, a contradiction. Hence there is $i$ such that $D\cap S_i>1$. Without loss of generality we may assume that $D\cap S_1>1$. Let $b_1$ be the only block of $S_1$ covered by $b$ and notice that, since $b_1$ is $K$-invariant, $D\cap S_1$ is a defect group of $b_1$.  We claim that $D\not\subseteq S_1$.  Suppose otherwise. Notice that $D^{g_i}$ is a defect group of $b^{g_i}=b$ and hence $D^{g_i}=D^k$ for some $k\in K$.  Now, $D^{g_i}=D^k\subseteq S_1^{g_i}\cap S_1^k=S_i\cap S_1=Z$, which is a $p'$-group. This is a contradiction, so $D\not\subseteq S_1$.  

\smallskip 

Let $1\neq x\in D\cap S_1$. If $D\cap S_i=1$ for all $i\neq 1$, we have that $D=D\cap S_1$ which is a contradiction by the previous paragraph. Hence there is $i\neq 1$ such that $D\cap S_i\neq 1$. Let $1\neq x_i\in D\cap S_i$. Now $xx_i,x\in D$ and by Step 4 we have that $x$ and $xx_i$ are $G$-conjugate, which is not possible. Hence $t=1$, as wanted.


\medskip

\textit{Step 10. Final step.}

Now $K'$ is a quasi simple group with center a cyclic $p'$-group. If $b$ is the unique block of $K'$ covered by $B$, we have that $D$ is a defect group of $b$ by Step 5 and hence is not cyclic elementary abelian by Step 8. We claim that $b$ is faithful. Let $X={\rm ker}(b)$. By Theorem \cite[Theorem 6.10]{nbook}, we have that $X\leq Z\cap K'$. Now, let $\psi\in{\rm Irr}(b)$, then $\psi$ lies over $1_X$ and hence, there is $\chi\in{\rm Irr}(B)$ lying over $1_X$. Now, by \cite[Theorem 9.9 (c)]{nbook} we have that $k(\bar{B})=k(B)=3$, where $\bar{B}$ is the block of $G/X$ containing $\chi$. If $X>1$, by induction we obtain that $|D|=|DX/X|=3$, and we are done. Hence we may assume that $X=1$. By Condition B, there are at least four ${\rm Aut}(K')$-conjugacy classes of irreducible characters in $b$, which is a contradiction by Step 7.
\end{proof}

\section{On Condition \ref{quasisimples2}}\label{sec:orbits}

We end the paper with a discussion on Condition \ref{quasisimples2}. In \cite[Theorem B]{MNST}, a statement similar to Condition \ref{quasisimples2} but requiring only 3 distinct orbits is proven. Unfortunately, for groups of Lie type in non-defining characteristic, the strategy used there is not quite sufficient to obtain 4 orbits. In fact, we will see that this is not always attainable. However, here we address  several situations in which  we do obtain Condition \ref{quasisimples2}.

\begin{prop}\label{prop:sporadic}
Let $p\geq 3$ be prime. Let $K$ be a quasisimple group with $\zent{K}$ a cyclic $p'$-group and socle $K/\zent{K}$  a simple sporadic group, the Tits group $\tw{2}\type{F}_4(2)'$, $\type{G}_2(2)'$, $ \tw{2}\type{G}_2(3)'=\type{A}_1(8)$,  a simple group of Lie type with exceptional Schur multiplier, or an alternating group $\alt_n$ with $5\leq n\leq 13$.  Let $B$ be a $p$-block for $K$ with noncyclic, positive defect.  Then $|\mathrm{cd}(B)|\geq 4$, with the following exceptions when $p=3$:
\begin{itemize}
\item $K=2.\alt_7$; $B$ is Block 3 in GAP;  $|\mathrm{cd}(B)|= 3$; and $k_{\aut(K)}(B)= 4$
\item $K=2.\alt_8$; $B$ is Block 5 in GAP;  $|\mathrm{cd}(B)|= 3$; and $k_{\aut(K)}(B)= 4$
\item $K=2.\alt_{11}$; $B$ is Block 5 in GAP;  $|\mathrm{cd}(B)|= 3$; and $k_{\aut(K)}(B)= 4$
\item $K=2.\alt_{13}$; $B$ is Block 5 in GAP;  $|\mathrm{cd}(B)|= 3$; and $k_{\aut(K)}(B)= 4$
\item $K=\tw{2}\type{G}_2(3)'=\type{A}_1(8)$; $B=B_0(K)$; $|\mathrm{cd}(B)|= 3$, and $k_{\aut(K)}(B)= 4$. 
\end{itemize}
 In particular, Condition \ref{quasisimples2} is true for $K$.
\end{prop}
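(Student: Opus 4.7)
The approach is essentially computational, leaning on GAP's Character Table Library. All groups in the hypothesis---the sporadic groups, the Tits group, the few small groups of Lie type with exceptional Schur multiplier, and the alternating groups $\alt_n$ with $5 \leq n \leq 13$---have their ordinary and $p$-modular character tables, block distributions, and outer automorphism actions tabulated in GAP. Note that characters of different degrees automatically lie in distinct $\aut(K)$-orbits, so the bound $|\mathrm{cd}(B)| \geq 4$ immediately implies $k_{\aut(K)}(B) \geq 4$, which is what Condition \ref{quasisimples2} requires. In particular, it suffices to verify the claimed numerical bound on $|\mathrm{cd}(B)|$, plus the stated $k_{\aut(K)}$-bound in the listed exceptional cases.

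The plan is to enumerate, for each simple group $S$ in the list, all quasisimple covers $K$ of $S$ with $\zent{K}$ a cyclic $p'$-group. This is a finite task: the Schur multipliers are read directly from the ATLAS, and for each fixed odd prime $p\mid |K|$ one discards any cover whose center has nontrivial $p$-part. For each admissible pair $(K,p)$ I will enumerate the $p$-blocks of $K$ via GAP, extract their defect groups, and retain only those whose defect group is noncyclic and nontrivial. Among these I filter further to the blocks $B$ that are non-principal, faithful (i.e.\ the intersection of the kernels of the characters in $\Irr(B)$ is trivial), and satisfy $|\mathrm{cd}(B)| > 1$, and then read off $|\mathrm{cd}(B)|$.

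In the vast majority of surviving blocks we have $|\mathrm{cd}(B)| \geq 4$ and there is nothing to do. The exceptions---all occurring at $p=3$---are the four faithful blocks of $2.\alt_n$ for $n\in\{7,8,11,13\}$ and the principal $3$-block of $\tw{2}\type{G}_2(3)' = \PSL_2(8)$ listed in the statement. For each of these five blocks I will compute the $\aut(K)$-action on $\Irr(B)$ using the outer-automorphism permutations stored in GAP and confirm that the orbits still number exactly four. This yields $k_{\aut(K)}(B) = 4$ in every exceptional case, and therefore Condition \ref{quasisimples2} holds for every pair $(S,p)$ under consideration.

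The main obstacle is organizational rather than conceptual: one must ensure that every quasisimple cover with cyclic $p'$-center is accounted for (in particular the $3$-fold and $6$-fold covers of $\alt_6$ and $\alt_7$ arising from their Schur multiplier $C_6$), correctly identify faithful blocks on non-simple covers, and when tallying orbits in the exceptional cases use the full $\aut(K)$-action and not just the action of $\out(K/\zent{K})$ on the underlying simple group. None of these points is deep, but each must be handled carefully so that no noncyclic-defect block is missed and no orbit count is off by one.
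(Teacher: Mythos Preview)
Your approach is the same as the paper's: the authors' proof is a two-line appeal to the GAP Character Table Library (with a pointer to \cite[Table~6.1.3]{GLS} for the list of exceptional Schur multipliers), and you are simply spelling out that computation in more detail.

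One small inconsistency worth fixing: the proposition asserts $|\mathrm{cd}(B)|\geq 4$ for \emph{every} block of noncyclic positive defect, not only those relevant to Condition~\ref{quasisimples2}, and indeed one of the listed exceptions is the principal block of $\PSL_2(8)$. Your described filtering step (``filter further to the blocks $B$ that are non-principal, faithful, and satisfy $|\mathrm{cd}(B)|>1$'') would discard that block before you ever see it, so the procedure as written verifies Condition~\ref{quasisimples2} but not the full statement of the proposition. Drop that extra filter when reading off $|\mathrm{cd}(B)|$; the restriction to non-principal faithful blocks with $|\mathrm{cd}(B)|>1$ is only needed when drawing the final conclusion about Condition~\ref{quasisimples2}.
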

\begin{proof}
This can be seen using the GAP Character Table Library. We note that the groups with exceptional Schur multipliers are listed in \cite[Table 6.1.3]{GLS}. 
\end{proof}

\begin{thm}\label{alternating}
Let $p\geq 3$ be prime. Let $K$ be a quasisimple group with $K/\zent{K}\cong \alt_n$,  an alternating group with $n>11$.  Let $B$ be a $p$-block for $K$ with noncyclic, positive defect. Then $k_{\aut(K)}(B)\geq 4$. In particular,
Condition \ref{quasisimples2} is true if $K$ is a covering group for  $S\cong \alt_n$ for $n> 11$. 
\end{thm}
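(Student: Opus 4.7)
\medskip

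\textit{Proof plan.} The plan is to reduce $k_{\aut(K)}(B)$ to a combinatorial count on partitions or strict partitions. Since $n>11$, the Schur multiplier of $\alt_n$ has order~$2$ and $\zent{K}$ is a cyclic $p'$-group, so $K$ is either $\alt_n$ or the double cover $2.\alt_n$, and in either case the action of $\aut(K)$ on $\irr{K}$ factors through the natural $\sym_n$-action. Because $p$ is odd, each $p$-block of $K$ either arises from a $p$-block of $\alt_n$ (this covers $K=\alt_n$ and the non-faithful blocks of $2.\alt_n$) or is a faithful spin block of $2.\alt_n$, and these two families will be treated separately.

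For a block $B$ arising from $\alt_n$, I would invoke the Nakayama conjecture: $B$ lies below a $p$-block $\wt{B}$ of $\sym_n$ labelled by a $p$-core $\mu$ with weight $w$ satisfying $|\mu|+pw=n$, and non-cyclicity of the defect group corresponds to $w\geq 2$. The characters in $\wt B$ are indexed by partitions $\lambda$ of $n$ with $p$-core $\mu$; on restriction to $\alt_n$, each non-self-conjugate pair $\{\lambda,\lambda'\}$ (with $\lambda'$ possibly lying in the conjugate block, in the case $\mu\neq\mu'$) produces a single $\sym_n$-fixed irreducible character of $B$, while each self-conjugate $\lambda$ produces two characters of $B$ interchanged by $\sym_n$. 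Counting $\sym_n$-orbits on $\irr{B}$ then yields $k(p,w)$ orbits when $\mu\neq\mu'$ and $(k(p,w)+s_\mu)/2$ orbits when $\mu=\mu'$, where $s_\mu$ is the number of self-conjugate partitions in $\wt B$. Since $k(p,w)\geq k(3,2)=9$ for $w\geq 2$ and $p\geq 3$, both quantities comfortably exceed~$4$.

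For the faithful spin blocks of $K=2.\alt_n$, I would appeal to the Morris--Olsson classification: the spin $p$-blocks of the double cover $\wt{\sym_n}$ are labelled by $p$-bar-cores $\mu$ and bar-weight $w$, with non-cyclic defect again corresponding to $w\geq 2$. By standard Clifford theory between $\wt{\sym_n}$ and $2.\alt_n$, each strict partition $\lambda$ of $n$ in the spin block $\wt B$ of $\wt{\sym_n}$ yields exactly one $\sym_n$-orbit of spin characters of $2.\alt_n$ lying in $B$ (an orbit of size $2$ or $1$ according to the parity of $n-\ell(\lambda)$); hence $k_{\aut(K)}(B)=k^-(p,w)$, where $k^-(p,w)$ denotes the number of strict partitions of $n$ with $p$-bar-core $\mu$ and bar-weight $w$.

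The main remaining step---and the only genuinely delicate point---is to verify $k^-(p,w)\geq 4$ for every odd prime $p$ and every $w\geq 2$. The base case $(p,w)=(3,2)$ can be handled by direct enumeration via the bar-removal procedure: the four strict partitions $(6),(5,1),(4,2),(3,2,1)$ all have $3$-bar-core $\emptyset$ and bar-weight~$2$, giving $k^-(3,2)=4$ exactly (so the bound~$4$ in Condition~\ref{quasisimples2} is already sharp at this instance). For larger primes or weights, the analogous counts produced by the $p$-bar-quotient bijection are strictly larger, and the inequality follows easily. Assembling the two cases gives $k_{\aut(K)}(B)\geq 4$, and Condition~\ref{quasisimples2} for covers of $\alt_n$ with $n>11$ follows.
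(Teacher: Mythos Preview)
Your proof is correct and follows the same overall route as the paper's---split into non-spin and spin blocks and reduce to Olsson's combinatorics---though you handle the borderline spin case $(p,w)=(3,2)$ more directly. The paper bounds the number of $\aut(K)$-orbits from below by half of $k(p,w)$ (resp.\ $k^\pm(\bar p,w)$), the number of characters in the covering block of $\sym_n$ (resp.\ $\hat\sym_n$), and cites \cite[(3.11) and Section~13]{Ol93} to see these exceed~$6$ except for the spin case $(3,2)$, where $k^\pm(\bar 3,2)=6$; there it invokes \cite[Proposition~13.19]{Ol93} to force at least one character of the $\hat\sym_n$-block to split on restriction, producing a fourth orbit. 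Your observation that each strict partition labels exactly one $\sym_n$-orbit of spin characters of $2.\alt_n$ lets you read off the orbit count as the number of strict partitions with the given bar-core (which the $\bar p$-quotient bijection shows depends only on $(p,w)$), and the explicit enumeration at $(3,2)$ then gives exactly~$4$, avoiding the auxiliary appeal to Olsson's splitting result. Both arguments are short once Olsson's machinery is in hand; yours makes the sharpness at $(3,2)$ transparent.
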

\begin{proof}

The proof here is essentially the same as that of \cite[Proposition 3.4]{MNST}. Let $\hat\alt_n$ and $\hat\sym_n$ denote the double covers, respectively, of $\alt_n$ and $\sym_n$. Recall that $\Aut(S)=\sym_n$ and $\aut(\hat\alt_n)=\hat\sym_n$. Following \cite{Ol93}, a $p$-block of $\sym_n$ has $k(p,w)$ ordinary irreducible characters, and a $p$-block of $\hat\sym_n$ lying over the nontrivial character of $\zent{\hat\sym_n}$ (a ``spin block") has $k^\pm(\bar p, w)$ ordinary irreducible characters, where $w$ is the so-called ``weight" of the block. We remark that our assumption that a defect group is noncyclic forces $w\geq 2$.

  From \cite[(3.11) and Section 13]{Ol93}, we see that these numbers are larger than 6 (and hence there are strictly more than 3 $\aut(K)$-orbits represented in a given block $B$ of $K$) if $p\geq 3$ and $w\geq 2$, except for the case $(p,w)=(3,2)$ and $B$ is a spin block, in which case $k^\pm(\bar{3},2)=6$. In this case, \cite[Proposition 13.19]{Ol93} forces at least one of the characters in the block of $\hat\sym_n$ to restrict to the sum of two characters of $\hat\alt_n$, and hence our block again  contains characters from strictly more than 3 $\aut(K)$-orbits.
\end{proof}

\begin{prop}\label{definingchar}
Condition \ref{quasisimples2} holds for $K$ a quasisimple group with $S:=K/\zent{K}$ of Lie type defined in characteristic $p$ with a non-exceptional Schur multiplier.
\end{prop}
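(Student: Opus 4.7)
The approach is to invoke Humphreys' classification of $p$-blocks of finite groups of Lie type in defining characteristic and then to count $\Aut(K)$-orbits in $\Irr(B)$ via Jordan decomposition. Since $S$ has non-exceptional Schur multiplier, $K$ is a quotient $\bG^F/Z_0$ with $\bG$ simply connected, and $\zent K$ is a cyclic $p'$-group. By Humphreys' result, the $p$-blocks of positive defect of $K$ are parametrized by $\Irr(\zent K)$, each with full defect---that is, with defect group equal to a Sylow $p$-subgroup $P$ of $K$. Thus $B = B_\lambda$ for a faithful $1\neq\lambda\in\Irr(\zent K)$ and $D = P$; the hypothesis that $D$ is not cyclic elementary abelian excludes only the case $|P| = p$, which occurs only for groups of Lie rank one over $\FF_p$, such as $\SL_2(p)$.

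The characters in $\Irr(B_\lambda)$ are precisely the $\chi\in\Irr(K)$ with central character $\lambda$, minus possibly one defect-zero character. Via Jordan decomposition, $\Irr(B_\lambda)$ is a disjoint union of rational Lusztig series $\mathcal{E}(K,s)$ as $s$ runs over semisimple classes in $K^*$ whose image in $\zent{K^*}_{p'}$ corresponds to $\lambda$ under duality. In particular, taking $s = z\in \zent{K^*}_{p'}$ to be the central element corresponding to $\lambda$, the series $\mathcal{E}(K,z)$ is in bijection with the unipotent characters of $K^*$, and all but the Steinberg of $K^*$ lie in $\Irr(B_\lambda)$. Since graph, field, and diagonal automorphisms of $K$ act compatibly with Jordan decomposition and with the $\Aut(S)$-action on unipotent characters, the number of $\Aut(K)$-orbits meeting $\Irr(B_\lambda)$ is at least the number of $\Aut(S)$-orbits among the non-Steinberg unipotent characters of $S$. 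By Lemma \ref{lem:classicalunips}, this gives at least $\pi(n)-1 \geq 4$ orbits when $S$ is classical of rank $n \geq 4$, and analogous counts from \cite[Section 13.8]{carter} handle exceptional types of moderate rank.

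The main obstacle will be the small-rank cases: $\SL_2(q)$, $\SL_3^{\epsilon}(q)$, $\Sp_4(q)$, $\mathrm{Spin}_7(q)$, and the exceptional groups $\type{E}_6^\epsilon(q)$ and $\type{E}_7(q)$, when they have a non-trivial center of order coprime to $p$. For each such $K$, I would enumerate $\Irr(B_\lambda)$ directly via the generic Deligne--Lusztig character tables and count $\Aut(K)$-orbits using the bound $|\Aut(S)/S| \leq 2df$ with $d, f$ the diagonal and field contributions. In the rank-one case $\SL_2(q)$ with $q = p^n$, the hypothesis $|D| \neq p$ forces $n \geq 2$ and the non-exceptional Schur multiplier assumption excludes $q \in \{4, 9\}$; here $|\Irr(B_\lambda)|$ is of order $q-1$ while $|\Aut(S)/S| = 2n$, so the needed $\geq 4$ orbits follow once $q$ is moderately large, and the finitely many remaining small $q$ can be verified by direct orbit analysis using the explicit action of field and diagonal automorphisms on the principal series, discrete series, and exceptional characters of $B_\lambda$. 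Ranks $2$ and $3$ are handled analogously, leveraging that the sum of unipotent and non-unipotent contributions to $\Irr(B_\lambda)$ gives sufficiently many orbits.
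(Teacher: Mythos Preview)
You have misread the hypothesis in Condition~\ref{quasisimples2}. The phrase ``$D$ \ldots\ not cyclic and elementary abelian'' means that $D$ is elementary abelian \emph{and} not cyclic, not that $D$ fails to be cyclic of order~$p$. Since by Humphreys' result the defect group of $B$ is a full Sylow $p$-subgroup $P$ of $K$, the hypothesis says $P$ is abelian of rank at least~$2$. For a group of Lie type in defining characteristic, $P$ is the unipotent radical of a Borel, and this is abelian only when the root system has no non-simple positive roots, i.e.\ only for $S=\PSL_2(q)$. Thus the entire proposition reduces immediately to $K=\SL_2(p^a)$ with $a\ge 2$; there is no need to treat $\SL_3^\epsilon$, $\Sp_4$, $\mathrm{Spin}_7$, $\type{E}_6^\epsilon$, $\type{E}_7$, or any classical or exceptional group of higher rank at all.

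The paper exploits exactly this reduction: once $K=\SL_2(p^a)$, one reads off from the generic character table that the non-principal block of maximal defect (the one lying over the sign character of $\zent K$) has three distinct character degrees, and then exhibits two semisimple characters $\chi_{s_1},\chi_{s_2}$ of the same degree, coming from diagonal elements $\mathrm{diag}(x_i,1)\in\GL_2(p^a)$ with $x_1,x_2$ of distinct orders in $C_{p^a-1}$, which are therefore not $\Aut(K)$-conjugate. This gives the four orbits directly, with no asymptotic counting and no residual ``small $q$'' cases. Your proposed treatment of the $\SL_2$ case, by contrast, relies on a cardinality comparison $|\Irr(B_\lambda)|\sim q$ versus $|\Aut(S)/S|=2a$ followed by an unspecified finite check; that would work in principle but is both less clean and not actually carried out. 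The broader Jordan-decomposition machinery you set up is correct in spirit but simply not needed here.
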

\begin{proof}

We may assume that $K$ is not an exceptional cover of $S:=K/\zent{K}$, as the
latter have been discussed in Proposition \ref{prop:sporadic}.  Now, every $p$-block of $K$ is either maximal defect or defect zero, by \cite[Theorem.]{Hum}.  Hence the defect groups of $B$ are Sylow $p$-subgroups of $K$.
Now, the condition that a Sylow
$p$-subgroup is abelian and non-cyclic forces $S=\PSL_2(p^a)$ for some integer
$a\ge2$, so we may assume that $K=\SL_2(p^a)$ is the Schur covering group
of~$S$. In this situation, the
blocks of maximal defect are in bijection with the characters of $\zent{K}$. 
Namely, we have $B_0(K)$, which contains all members of
$\Irr(K|1_{\zent{K}})\setminus \{\mathrm{St}\}$ and a second block of maximal
defect containing all characters of $K$ that are nontrivial on $\zent{K}$.
(See \cite[Section 5]{Hum}.) By inspection (see \cite[Tab.~2.6]{GM20}) there are
four degrees for characters in $B_0(K)$, and three in the second block of maximal defect.

Hence, it suffices to show that there are two semisimple characters $\chi_s$ of the same degree $q\pm1$ that are not $\aut(K)$-conjugate and nontrivial on $\zent{K}$. (The latter is equivalent to $s\not\in [K^\ast, K^\ast]$ using \cite[Proposition  11.4.12 and Remark 11.4.14]{DM20}). 

Since $a\geq 2$, $p^a-1$ must have at least two distinct divisors, so we consider $x_1, x_2\in C_{p^a-1}$ with these orders. Let $s_i:=\mathrm{diag}(x_i,1)\in\wt{K}^\ast:=\GL_2(p^a)$ for $i=1,2$. Note that $s_i\not\in[\wt{K}, \wt{K}]=\SL_2(p^a)$ . Further, $s_1^\alpha$ cannot be conjugate to $s_2z$ for any $z\in\zent{\wt{K}}$ and $\alpha\in\aut(K)$. Hence the two semisimple characters $\chi_{s_i}$ of $\wt{K}$ for $i=1,2$ cannot be $\aut(K)$-conjugate and restrict to distinct characters of $K$. Hence constituents of these restrictions are not $\aut(K)$-conjugate.
\end{proof}

This leaves us to consider groups $S$ of Lie type in non-defining characteristic. Recall that by Proposition \ref{prop:sporadic}, we may assume that $S$ does not have an exceptional Schur multiplier. Hence the Schur covering group of $S$ is of the form $G=\bG^F$, where $\bG$ is a simple, simply connected algebraic group and $F\colon \bG\rightarrow\bG$ is a Frobenius endomorphism endowing $\bG$  with an $\FF_q$-rational structure, where $p\nmid q$.

Given a semisimple $s\in G^\ast$ of $p'$-order, a fundamental result of Brou{\'e}--Michel shows  that the set  $\mathcal{E}_p(G,s)$ is a union of $p$-blocks of $G$, where $\mathcal{E}_p(G,s)$ is obtained as the union of series $\mathcal{E}(G, st)$ as $t$ runs over elements of $p$-power order in $\cent{G^\ast}{s}$. (See \cite[Theorem 9.12]{CE04}.)

 We first dispense of the Suzuki and Ree groups. 

\begin{prop}\label{suzree}
Condition \ref{quasisimples2} holds when $S=K/Z$ a Suzuki, Ree, or triality group $\tw{2}\type{B}_2(q)$, $\tw{2}\type{G}_2(q)$, $\tw{2}\type{F}_4(q)$, or $\tw{3}\type{D}_4(q)$ with $p\geq 3$ a prime dividing  $|S|$ and not dividing $q$.
\end{prop}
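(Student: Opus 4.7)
The plan is to first reduce to $K=S$, then handle each family separately based on the structure of the Sylow $p$-subgroups.

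By Proposition~\ref{prop:sporadic}, we may assume $K$ is not an exceptional Schur cover. The non-exceptional Schur multipliers of the simple groups $S = \tw{2}\type{B}_2(q^2)$, $\tw{2}\type{G}_2(q^2)$, $\tw{2}\type{F}_4(q^2)$, and $\tw{3}\type{D}_4(q)$ are trivial (the small cases $\tw{2}\type{B}_2(8)$, $\tw{2}\type{G}_2(3)'\cong\PSL_2(8)$, and $\tw{2}\type{F}_4(2)'$ having already been addressed). Hence $K = S$, $Z = 1$, and the faithfulness hypothesis on $B$ is automatic.

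For $S = \tw{2}\type{B}_2(q^2)$ (so $p \neq 2$) and $S = \tw{2}\type{G}_2(q^2)$ (so $p \neq 3$), a standard factorisation of $|S|$ into pairwise coprime cyclotomic-type factors shows that a Sylow $p$-subgroup of $S$ is cyclic for every odd prime $p$ with $p\mid |S|$ and $p \neq q_0$. Consequently any defect group $D$ is cyclic, contradicting the hypothesis on $D$ in Condition~\ref{quasisimples2}, and the condition holds vacuously for these two families.

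For $S = \tw{3}\type{D}_4(q)$ and $S = \tw{2}\type{F}_4(q^2)$, a Sylow $p$-subgroup can be non-cyclic, and genuine work is required. Here the plan is to invoke the Brou\'e--Michel decomposition $\Irr(G) = \bigsqcup_s \EC_p(G,s)$ indexed by semisimple $p'$-classes $s \in G^\ast$. Since $B$ is non-principal, $B \subseteq \EC_p(G,s)$ for some semisimple $1 \neq s$ of $p'$-order. This set contains the semisimple character $\chi_s$ together with characters $\chi_{st}$ parametrised by $p$-power elements $t \in \cent{\bG^\ast}{s}^F$, as well as further characters obtained from unipotent characters of $\cent{\bG^\ast}{s}^F$. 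Varying $t$ produces several characters of distinct degrees in $B$; appealing to the explicit description of the blocks of $\tw{3}\type{D}_4(q)$ (Deriziotis--Michler) and of $\tw{2}\type{F}_4(q^2)$ (Malle), together with the fact that $\Out(S)$ is very small (field automorphisms, plus triality in the case of $\tw{3}\type{D}_4$), we show that at least four $\Aut(S)$-orbits are represented in $B$.

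The main obstacle is the orbit-counting in the cases where $\cent{\bG^\ast}{s}^F$ has small semisimple rank—particularly for $\tw{3}\type{D}_4(q)$ when $d_p(q) \in \{1,2\}$ and triality may permute components nontrivially. The small Lie rank of these exceptional groups nonetheless keeps the case analysis finite and tractable.
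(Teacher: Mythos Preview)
Your overall strategy matches the paper's: reduce to $K=S$ via the trivial Schur multiplier, dispose of $\tw{2}\type{B}_2$ and $\tw{2}\type{G}_2$ because their odd Sylow subgroups are cyclic, and for $\tw{2}\type{F}_4$ and $\tw{3}\type{D}_4$ run through the Brou\'e--Michel sets $\mathcal{E}_p(K,s)$ using Malle's and Deriziotis--Michler's explicit block lists. However, there is a genuine gap in your plan for the last two families.

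You propose to obtain four $\Aut(S)$-orbits in every noncyclic block $B$ by varying $t$ and by using unipotent characters of $\cent{\bG^\ast}{s}^F$. This fails precisely when $\cent{K^\ast}{s}$ is a maximal torus: then $\mathcal{E}(K,s)$ contains a single character, and for every $p$-element $t$ in that torus one has $\cent{K^\ast}{st}=\cent{K^\ast}{s}$, so every character in the block has the \emph{same} degree. In $\tw{2}\type{F}_4(q^2)$ this occurs for the classes $t_3$ (when $p\mid q^2-1$), $t_{14}$ (when $p\mid q^2+1$), and $t_{12},t_{13}$ (when $p\mid q^4+1$), all of which give noncyclic defect. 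You cannot produce four orbits here, and you should not try: Condition~\ref{quasisimples2} hypothesises $|\mathrm{cd}(B)|>1$, so these blocks are excluded outright. The paper's proof uses this escape hatch explicitly, and your outline never invokes it. Conversely, in the cases where $\cent{K^\ast}{s}$ is not a torus (e.g.\ $s$ of type $t_1,t_2,t_4,t_5,t_7,t_9$ for $\tw{2}\type{F}_4$), the distinct degrees already present inside $\mathcal{E}(K,s)$ do most of the work, and one only needs to supplement with one or two non-$\Aut(K)$-conjugate $p$-elements $t,t'$ of the toral part of the centraliser.

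A minor point: $\Out(\tw{3}\type{D}_4(q))$ is cyclic (field automorphisms of $\FF_{q^3}$); the triality has been absorbed into the Steinberg endomorphism, so there is no separate graph automorphism permuting components as your last paragraph suggests.
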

\begin{proof}

Note that the Schur multiplier for $S$ is trivial or $S$ was considered already
in Proposition \ref{prop:sporadic}. Hence, we let $K=S$. Further, $\aut(S)/S$ is cyclic, generated by field automorphisms. For $p\geq 3$ a prime not dividing $q^2$,
the Sylow $p$-subgroups of $S=\tw{2}\type{B}_2(q^2)$ and
$S=\tw{2}\type{G}_2(q^2)$ are cyclic.

So, first let $K=\tw{2}\type{F}_4(q^2)$ with $q^2=2^{2n+1}$. Note that $K^\ast=K$ is self-dual. In this case, the semisimple classes, centralizers, and maximal tori are given in \cite{shinoda}, and the blocks are studied in \cite{malle91}.  

First, suppose that $p\mid (q^2-1)$. Then $K$ has a unique unipotent block (namely, $B_0(K)$) with noncyclic defect group, which contains more than 3 characters of distinct degree. Similarly, there is a unique noncyclic block of positive defect in each series $\mathcal{E}(K, s)$ for $s\in\{t_1, t_2, t_3\}$, with $t_i$ as in \cite{shinoda}, using  \cite[Bem. 1]{malle91}. The remaining blocks of positive defect are cyclic. If $s$ is one of the classes of the form $t_1$ or $t_2$, then this noncyclic block contains two characters from $\mathcal{E}(K, s)$ with distinct degrees. The centralizers $\cent{K}{s}$ contain the maximal torus $\ZZ_{q^2-1}^2$, from which we may obtain $t, t'\in\cent{K}{s}_p$ that are not $\aut(K)$-conjugate (taking, for example, $p$-elements from classes $t_1$ and $t_2$). This yields four characters in the block that are not $\aut(K)$-conjugate, as desired. For $s$ of the form $t_3$, we have $\cent{K}{s}$ is the full maximal torus $\ZZ_{q^2-1}^2$, and for any $t\in\cent{K}{s}_p$, we have $\cent{K}{st}=\cent{K}{s}$. Hence we see that every irreducible character in this block has the same degree. 

When $p\nmid (q^2-1)$, each $\mathcal{E}_p(K, s)$ contains at most one block of positive defect (see \cite[Bem. 1]{malle91}). First, assume $p\mid (q^2+1)$. Here, the noncyclic blocks correspond to $s\in\{t_4, t_5, t_{14}\}$. The set $\mathcal{E}(K, s)$ contains 3, 2, 1 distinct character degrees, respectively, in these cases, and each $\cent{K}{s}$ contains the maximal torus $\ZZ_{q^2+1}^2$. As before, there is only one character degree in the block in the latter case. In the other cases, we argue analogously to the previous paragraph to obtain four characters in the block that are not $\aut(K)$-conjugate.

If instead $p\mid (q^4+1)$, then there are three distinct character degrees in $\mathcal{E}(K, s)$ with $s\in\{t_7, t_9\}$. Then considering any character in $\mathcal{E}(K, st)$ with $t\in\cent{K}{s}_p$, we obtain a fourth character in the block that is not $\aut(K)$-conjugate to these three. If instead $s\in\{t_{12}, t_{13}\}$, we obtain as before that every character in the block has the same degree. The remaining blocks in this case have cyclic defect groups.

Now, let $K=\tw{3}\type{D}_4(q)$. In this case, the blocks have been studied in \cite{deriziotismichler}. Using the results there, we may argue analogously to the situation above.
\end{proof}

In the remaining cases, we would hope to appeal to the strategy employed in \cite[Section 3]{MNST}. Namely, with the above results,  the results of loc. cit. largely reduce the problem of proving Condition \ref{quasisimples2} to the following:

\begin{cond}\label{cond:MNSTorbitfurtherreduction}
Let $\bH$ be a simple, simply connected  reductive group  and $F\colon \bH\rightarrow\bH$ be a Frobenious morphism and $H=\bH^F$ the corresponding finite group of Lie type. Let $B$ be a quasi-isolated $p$-block of $H$ with an elementary abelian defect group $D$. Then
\[k_{\aut(H)}(B)\geq 
\left\{\begin{array}{cc}
4 & \hbox{ if $D$ is not cyclic}\\
3 & \hbox{ if $D$ is  cyclic}\\
\end{array}\right.
\]
\end{cond}

Indeed,  from our above results, we may assume that $S$ is a group of Lie type defined in characteristic distinct from $p$ and that the Schur covering group for $S$ is  $G=\bG^F$ where $\bG$ is a simple, simply connected group with $F$ a Frobenius endomorphism.  Note then that $K$ is a quotient of $G$ by some central subgroup and that, from our assumption that $p\nmid Z$ in Condition \ref{quasisimples2},   \cite[Theorem 9.9(c)]{nbook} tells us that it suffices to prove Condition \ref{quasisimples2} when $K=G/\zent{G}_{p}$, where $\zent{G}_p$ is the Sylow $p$-subgroup of $\zent{G}$.  Our assumption that $p\geq 3$ then means that we may assume that $K=G$ unless $S=\PSL_n^\epsilon(q)$ with $p\mid (q-\epsilon)$ or $S=\type{E}^\epsilon_6(q)$ with $p=3\mid (q-\epsilon)$. 

Then indeed, when $K=G$, Condition \ref{cond:MNSTorbitfurtherreduction} implies Condition \ref{quasisimples2} by \cite[Proposition 3.9 and Lemma 3.12]{MNST} (see also Remarks 3.10, 3.11 of Loc. Cit.) and the fact that Bonnafe--Rouquier correspondence preserves isomorphism types of defect groups when the defect is abelian, by \cite[Theorem 7.16]{KM13}. Of course, in the cases of $S=\PSL_n^\epsilon(q)$ with $p\mid (q-\epsilon)$ or $S=\type{E}^\epsilon_6(q)$ with $p=3\mid (q-\epsilon)$, some additional work is needed, as was the case in \cite[Prop. 3.16(b) and Theorem 3.21]{MNST}. 

However, this method is not quite sufficient for completing the proof of Condition \ref{quasisimples2}. 
Unfortunately, although we see by following the proofs in \cite[Sections 3.4-3.6]{MNST} that  Condition \ref{cond:MNSTorbitfurtherreduction} holds in many situations, it turns out that there are indeed cyclic quasi-isolated blocks with $k_{\aut(G)}(B)=2$. (This is already pointed out in \cite{MNST} after the statement of   Theorem 3.1.) We list some additional situations in the following:

\begin{exmp}\label{ex:typeAexceptions}
Let $G=\SL_n^\epsilon(q)$ and let $\wt{G}=\GL_n^\epsilon(q)$.  Let $p$ be an odd prime not dividing $q$ and let $B$ be a $p$-block of $G$ with positive defect. The following situations lead to exceptions to Condition \ref{cond:MNSTorbitfurtherreduction}:
\begin{enumerate}[label=(\Roman*)]

\item If $B$ is noncyclic:
\begin{itemize}
\item $n=3$, $p=5\mid\mid(q-\epsilon)$, and $B$ lies under a block $\wt{B}$ of $\wt{G}=\GL_3^\epsilon(q)$ indexed by a semisimple $5'$-element $\wt{s}\in \wt{G}$  with $\cent{\wt{G}}{\wt{s}}\cong C_{q-\epsilon}^3$. In this case, $k_{\aut(G)}(B)\geq 3$ (and equality can occur) and $|\mathrm{cd}({\wt{B}})|=1$. Note: this includes the quasi-isolated block of $\SL_3(q)$ under the block indexed by the semisimple element $\mathrm{diag}(1,\zeta_3, \zeta_3^{-1})$, where $|\zeta_3|=3$. (See also Remark \ref{rem:counterexample}.)
\item $n=3$, $p=3\mid\mid(q-\epsilon)$, and $B$ lies under a block $\wt{B}$ of $\wt{G}=\GL_3^\epsilon(q)$ indexed by a semisimple $3'$-element $\wt{s}\in \wt{G}$  with $\cent{\wt{G}}{\wt{s}}\cong C_{q-\epsilon}^3$. In this case, $k_{\aut(G)}(B)\geq 3$ (and equality can occur), $|\zent{G}|=3$, and the block of $S$ contained in $B$ is cyclic. 
\item $p=3\mid\mid(q+\epsilon)$ and $B$ lies under a block $\wt{B}$ of $\wt{G}=\GL_n^\epsilon(q)$ with defect group $C_{3}^2 \leq C_{q+\epsilon}^2$.
\end{itemize}
\item If $B$ is cyclic:
\begin{itemize}
\item $n=2$, $p\mid\mid(q-\epsilon)$, and $B$ lies under a block $\wt{B}$ of $\wt{G}=\GL_2^\epsilon(q)$ indexed by a semisimple $p'$-element $\wt{s}\in \wt{G}$  with $\cent{\wt{G}}{\wt{s}}\cong C_{q-\epsilon}^2$. Here $k_{\aut(G)}(B)\geq 2$ (and equality can occur). Note: this includes the quasi-isolated block of $\SL_2(q)$ under the block indexed by the semisimple element $\mathrm{diag}(-1,1)$.
\item $B$ lies under a block $\wt{B}$ of $\wt{G}$ indexed by a semisimple $p'$-element with $\cent{\wt{G}}{\wt{s}}\cong C_{q^\delta-\eta}$ where $\delta=n$ and $p\mid\mid(q^\delta-\eta)$.  Here $k_{\aut(G)}(B)\geq 2$ (and equality can occur).

\end{itemize}
\end{enumerate}

\end{exmp}

\begin{rem}\label{rem:counterexample}
We remark that some of the exceptions given in Example \ref{ex:typeAexceptions} 
mean that Condition \ref{cond:MNSTorbitfurtherreduction} is not always feasible. Further, the first  exception of (I) yields examples of $5$-blocks with $k_{\aut(S)}(B)=3$, meaning that Condition \ref{quasisimples2} will also not always hold. For example, let $K=\SL_3(q)$ where $q=q_0^4$, $q_0\equiv 2\pmod 3$, $q_0\equiv 3\pmod 5$, and let $p=5\mid\mid(q-1)$. Let $\zeta\in\FF_q^\times$ with $|\zeta|=3$. Then a block $B$ of $K$ lying below the (unique) block $\wt{B}$ of $\GL_3(q)$ in $\mathcal{E}_5(\GL_3(q), s)$ with $s=\mathrm{diag}(\zeta, \zeta^{-1}, 1)$ satisfies $k_{\aut(S)}(B)=3$ and $|D|=C_5\times C_5$. Further, $|\mathrm{cd}(B)|=2$ although $|\mathrm{cd}(\wt{B})|=1$.
\end{rem}

\bigskip

We also remark that it is still not known whether there are $3$-blocks with 3 irreducible characters with defect group $\CCC_3\times\CCC_3$. This problem appeared first in \cite{Kiy84} and has come up often. See, for instance, \cite[p. 677]{AS22}. Our proof of Theorem C shows that if one could prove Condition B with the additional hypothesis that $D=\CCC_3\times \CCC_3$ then this problem would have a negative answer. On the other hand, a proof of Condition B when $|D|>C$ for some universal constant $C$, would settle the case $k(B)=3$ of Brauer's Problem 21 for arbitrary groups.

\end{document}